\begin{document}
\newcommand {\emptycomment}[1]{} 

\baselineskip=15pt
\newcommand{\nc}{\newcommand}
\newcommand{\delete}[1]{}
\nc{\mfootnote}[1]{\footnote{#1}} 
\nc{\todo}[1]{\tred{To do:} #1}

\nc{\mlabel}[1]{\label{#1}}  
\nc{\mcite}[1]{\cite{#1}}  
\nc{\mref}[1]{\ref{#1}}  
\nc{\meqref}[1]{\eqref{#1}} 
\nc{\mbibitem}[1]{\bibitem{#1}} 

\delete{
\nc{\mlabel}[1]{\label{#1}  
{\hfill \hspace{1cm}{\bf{{\ }\hfill(#1)}}}}
\nc{\mcite}[1]{\cite{#1}{{\bf{{\ }(#1)}}}}  
\nc{\mref}[1]{\ref{#1}{{\bf{{\ }(#1)}}}}  
\nc{\meqref}[1]{\eqref{#1}{{\bf{{\ }(#1)}}}} 
\nc{\mbibitem}[1]{\bibitem[\bf #1]{#1}} 
}

\newcommand {\comment}[1]{{\marginpar{*}\scriptsize\textbf{Comments:} #1}}
\nc{\mrm}[1]{{\rm #1}}
\nc{\id}{\mrm{id}}  \nc{\Id}{\mrm{Id}}
\nc{\admset}{\{\pm x\}\cup (-x+K^{\times}) \cup K^{\times} x^{-1}}

\def\a{\alpha}
\def\admt{admissible to~}
\def\ad{associative D-}
\def\asi{ASI~}
\def\aybe{aYBe~}
\def\b{\beta}
\def\bd{\boxdot} 
\def\btl{\blacktriangleright}
\def\btr{\blacktriangleleft}
\def\calo{\mathcal{O}}
\def\ci{\circ}
\def\d{\delta} 
\def\D{\Delta}
\newcommand{\End}{\mathrm{End}}
\def\frakB{\mathfrak{B}}
\def\G{\Gamma}
\def\g{\gamma} 
\def\l{\lambda} 
\def\lh{\leftharpoonup}
\def\lr{\longrightarrow}
\def\n{Nijenhuis~}
\def\o{\otimes}
\def\om{\omega}
\def\opa{\cdot_{A}}
\def\opb{\cdot_{B}}
\def\p{\psi}
\def\sadm{$S$-admissible~}
\def\r{\rho}
\def\ra{\rightarrow}
\def\rh{\rightharpoonup}
\def\rr{r^{\#}}
\def\s{\sigma}
\def\st{\star}
\def\ti{\times}
\def\tl{\triangleright}
\def\tr{\triangleleft}
\def\v{\varepsilon}
\def\vp{\varphi}
\def\vth{\vartheta}
\def\la{left Alia~}
\def\rpn{representation~}

\newtheorem{thm}{Theorem}[section]
\newtheorem{lem}[thm]{Lemma}
\newtheorem{cor}[thm]{Corollary}
\newtheorem{pro}[thm]{Proposition}
\theoremstyle{definition}
\newtheorem{defi}[thm]{Definition}
\newtheorem{ex}[thm]{Example}
\newtheorem{rmk}[thm]{Remark}
\newtheorem{pdef}[thm]{Proposition-Definition}
\newtheorem{condition}[thm]{Condition}
\newtheorem{question}[thm]{Question}
\renewcommand{\labelenumi}{{\rm(\alph{enumi})}}
\renewcommand{\theenumi}{\alph{enumi}}

\nc{\ts}[1]{\textcolor{purple}{MTS:#1}}
\nc{\zc}[1]{\color{blue}{Zhao:#1}}
\font\cyr=wncyr10


\title{Bialgebras induced by special left Alia algebras}

\author[Ma]{Tianshui Ma \textsuperscript{1,2*}}
\address{1. School of Mathematics and Statistics, Henan Normal University, Xinxiang 453007, China;\ ~~2. Institute of Mathematics, Henan Academy of Sciences, Zhengzhou 450046, China}
         \email{matianshui@htu.edu.cn}

 \author[Zhao]{Chan Zhao}
 \address{School of Mathematics and Statistics, Henan Normal University, Xinxiang 453007, China}
        \email{zhaochan2024@stu.htu.edu.cn}

\author[Zheng]{Huihui Zheng}
\address{School of Mathematics and Statistics, Henan Normal University, Xinxiang 453007, China}
         \email{zhenghuihui@htu.edu.cn}

  \thanks{\textsuperscript{*}Corresponding author}

\date{\today}

 \begin{abstract} Special left Alia algebras were introduced by Dzhumadil'daev 
   in [J. Math. Sci. (N.Y.) 161(2009), 11-30] when studying the classification of algebras with skew-symmetric identity of degree 3. A special left Alia algebra (resp. coalgebra) $(A, [,]_{(f,g)})$ (resp. $(A, \Delta_{(F,G)})$) is constructed by a commutative associative algebra (resp. cocommutative coassociative coalgebra) $(A, \cdot)$ (resp. $(A, \delta)$) together with two linear maps $f, g: A\longrightarrow A$ (resp. $F, G: A\longrightarrow A$). We find that if $((A, \cdot), f)$ (resp. $((A, \delta), F)$) is a Nijenhuis associative algebra (resp. coassociative coalgebra) such that $f\circ g=g\circ f$ (resp. $F\circ G=G\circ F$), then $((A, [,]_{(f,g)}), f)$ (resp. $((A, \Delta_{(F,G)}), F)$) is a Nijenhuis left Alia algebra (resp. coalgebra). A bialgebraic structure, named Nijenhuis associative D-bialgebra and denoted by $((A, \cdot, \delta), f, F)$, for $((A, \cdot), f)$ and $((A, \delta), F)$ was presented in [J. Algebra 639(2024), 150-186]. In this paper, we investigate the bialgebraic structure, named Nijenhuis left Alia bialgebra and denoted by $((A, [,], \Delta), N, S)$, for a Nijenhuis left Alia algebra $((A, [,]), N)$ and a Nijenhuis left Alia coalgebra $((A, \Delta), S)$, such that Nijenhuis special left Alia bialgebra $((A, [,]_{(f,g)}, \Delta_{(F,G)}), f, F)$ can be induced by Nijenhuis commutative cocommutative associative D-bialgebra $((A, \cdot, \delta), f, F)$. We also provide a method to construct Nijenhuis operators on a left Alia algebra (resp. coalgebra).
 \end{abstract}
\subjclass[2020]{
17B38,  
17A30,  
16T25,   
16T10.   
}

\keywords{special left Alia algebra; Nijenhuis operator; Nijenhuis left Alia bialgebra}

\maketitle




\allowdisplaybreaks

\section{Preliminaries and Introduction}
 Nijenhuis operators are an important class of operators in mathematics, with widespread applications in geometry, algebra, and analysis. Serving as a bridge connecting geometry, algebra, analysis, and discrete mathematics, the study of the Nijenhuis operator continuously drives the development of multiple disciplines. 
 Let $(A, \diamond)$ be an algebra. Then a {\bf Nijenhuis operator on $(A, \diamond)$} is a linear map $N:A\lr A$ such that for all $x, y\in A$, the condition below holds:
 \begin{eqnarray}
 N(x)\diamond N(y)+N^2(x\diamond y)=N(N(x)\diamond y+x\diamond N(y)).\label{eq:n}\label{eq:7}
 \end{eqnarray}
 In this case, we call the pair $((A, \diamond), N)$ a {\bf Nijenhuis algebra}.

 A {\bf left anti-Lie-admissible algebra (abbr. left Alia  algebra)} (also named a 0-Alia algebra) is a pair $(A,[,])$ consisting of a vector space $A$ and a bilinear map $[,]: A\o A\rightarrow A$ satisfying the symmetric Jacobi identity
 \begin{eqnarray}\label{eq:syjacobi}
 [[x, y], z]+[[y, z], x]+[[z, x], y]=[[y, x], z]+[[z, y], x]+[[x, z], y],
 \end{eqnarray}
 where $x,y,z\in A$. This class of algebras was introduced by Dzhumadil'daev in \cite{Dz} when studying the classification of algebras with skew-symmetric identity of degree 3. There are various typical examples of left Alia  algebras, including Lie algebras, anti-pre-Lie algebras\cite{LB} and mock-Lie algebras \cite{Zh,Zu}, etc. 

 Let $(A, \cdot)$ be a commutative associative algebra, $f, g: A\lr A$ be linear maps. Define a multiplication $[,]_{(f,g)}$ on $A$ by
 \begin{eqnarray}
 [x, y]_{(f,g)}:=x\cdot f(y)+g(x\cdot y),~\forall~x, y\in A.\label{eq:special}
 \end{eqnarray}
 Then by \cite[Theorem 6.2]{Dz}, we know that $(A, [,]_{(f,g)})$ is a left Alia algebra, which is called by Dzhumadil'daev a {\bf special left Alia algebra with respect to $(A, \cdot, f, g)$}. Furthermore, we find that if $f$ is a Nijenhuis operator on $(A, \cdot)$ and $f\ci g=g\ci f$, then $f$ is a Nijenhuis operator on $(A, [,]_{(f,g)})$ (see Theorem \ref{thm:q}). Dually, let $(A, \d)$ be a cocommutative coassociative coalgebra, $F, G: A\lr A$ be linear maps. Define a comultiplication $\D_{(F,G)}$ on $A$ by
 \begin{eqnarray}\label{eq:cosplaa}
 \D_{(F, G)}(x)=x_{[1]}\o F(x_{[2]})+G(x)_{[1]}\o G(x)_{[2]},
 \end{eqnarray}
 where $\d(x)=x_{[1]}\o x_{[2]},~ \forall~ x\in A$, then $(A, \D_{(F,G)})$ is a left Alia coalgebra, which is called a {\bf special left Alia coalgebra with respect to $(A, \d, F, G)$}. Further, if $F$ is a Nijenhuis operator on $(A, \d)$ and $F\ci G=G\ci F$, then $F$ is a Nijenhuis operator on $(A, \D_{(F,G)})$ (see Proposition \ref{pro:ahhh}).

 In 2024, Ma and Long \cite{MLo} presented a bialgebraic theory for Nijenhuis associative algebras, and introduced the notion of a Nijenhuis associative D-bialgebra  $((A, \cdot, \d), f, F)$ by combining a Nijenhuis associative algebra $((A, \cdot), f)$ and a Nijenhuis coassociative coalgebra $((A, \d), F)$. Is there a bialgebraic structure $((A, [,]_{f, g}, \D_{F, G}), f, F)$ on the special left Alia algebra $(A, [,]_{(f,g)})$ that can be derived by a Njienhuis commutative and cocommutative associative D-bialgebra $((A, \cdot, \d), f, F)$) ? That is to say,
 {\small \begin{center}\vskip28mm
 \unitlength 1mm 
 \linethickness{0.4pt}
 \ifx\plotpoint\undefined\newsavebox{\plotpoint}\fi
 \hspace{-54mm}
 \begin{picture}(105,47.25)(0,0)
 \put(3,48){Nij ass D-bialg}
 \put(3,43){$((A, \cdot, \d), f, F)$}
 \put(50,45){$\Bigg\{$}
 \put(53,48){$((A, \cdot), f)\Rightarrow ((A, [,]_{(f, g)}), f)$}
 \put(53,43){$((A, \d), F)\Rightarrow ((A, \D_{(F, G)}), F)$}
 \put(98,45){$\Bigg\}$}
 \put(122,47){???~Nij \la bialg}
 \put(120,42){$((A, [,]_{(f, g)}, \D_{(F, G)}), f, F)$}
 \put(32,46){\vector(-1,0){0.75}}
 \put(49,46.25){\line(-1,0){16.25}}
 \put(49,45.75){\line(-1,0){16.25}}	
 \put(117,45.7){\vector(1,0){0.75}}
 \put(101,45.5){\line(1,0){15.75}}
 \put(101,46){\line(1,0){15.75}}	
 \put(74,70.8){ ??? }		
 \put(105,44.5){ ??? }		
 \put(128,70){Nij:=Nijenhuis}			
 \put(128,65){ass:=associative}			
 \put(128,60){bialg:=bialgebra}	
 \put(60,59){???~:=To be confirmed}			
 \put(27.5,48){\small\cite[Thm 4.11]{MLo} }		
 \put(135.25,54.5){\vector(2,-3){0.75}}\qbezier(18,55)(80,90)(135,55)
 \put(135.25,54.5){\vector(2,-3){0.75}}\qbezier(18,54.5)(80,89.5)(135,54.5)
 \end{picture}
 \end{center}
 }
 \vskip-39mm
 This is the motivation for writing this article.

 In this paper we will explore the bialgebraic structure on a Nijenhuis left Alia algebra such that the special case $((A, [,]_{(f,g)}, \D_{(g,f)}), f, F)$ can be induced by Nijenhuis commutative cocommutative associative D-bialgebra $((A, \cdot, \d), f, F)$. Also we find that Nijenhuis operators can be derived from a class of left Alia bialgebras. 
 
 This paper is organized as follows: In Section \ref{se:nijbialg}, we give the bialgebraic structure on a Nijenhuis left Alia algebra by means of the equivalences among matched pair of Nijenhuis left Alia algebras and Manin triple of Nijenhuis left Alia algebras. In Section \ref{se:ybe}, we focus on a class of Nijenhuis left Alia bialgebras, named triangular Nijenhuis left Alia bialgebras, which induces the compatible condition between the left Alia Yang-Baxter equation and Nijenhuis operators. We also provide some constructions of Nijenhuis left Alia bialgebras through relative Rota-Baxter operators. In Section \ref{se:special}, we apply the bialgebraic theory to Nijenhuis special left Alia algebras and obtain that the necessary and sufficient condition for a Nijenhuis special left Alia algebra and a Nijenhuis special left Alia coalgebra to be a Nijenhuis special left Alia bialgebra. We also find that a class of Nijenhuis special left Alia algebras and Nijenhuis special left Alia coalgebras produce Nijenhuis special left Alia bialgebra automatically, then achieve the motivation of writing this article. In Section \ref{se:nijcon}, we present a method to construct Nijenhuis operators on left Alia (co)algebra based on a kind of left Alia bialgebras, which means that left Alia algebra and Nijenhuis operator on it are closely connected.
	
 {\bf Notations:} Throughout this paper, all vector spaces, tensor products and linear homomorphisms are over a field $K$ of characteristic zero. We denote by $\id_M$ the identity map from $M$ to $M$, $\tau: M\o M\lr M\o M$ by the flip map. And we freely use the Sweedler notation for coalgebras in \cite{Sw}. For brevity, let $(C, \D)$ be a coalebra, we write a comultiplication $\D(c)$ as $c_{(1)}\o c_{(2)}, ~\forall~c\in C$ without the summation sign.

\section{Nijenhuis left Alia bialgebras}\label{se:nijbialg} In this section, we investigate the bialgebraic structures on a Nijenhuis left Alia algebra.

\subsection{Nijenhuis left Alia algebras} Let us recall from \cite{KLWY} some notions and results about left Alia algebras.

 \begin{defi}\label{de:b}\label{de:cc} Let $(A, [, ])$ be a left Alia algebra. A {\bf representation} of $(A, [, ])$ is a triple $(V, \ell, r)$, where $V$ is a vector space and $\ell, r: A\lr \End(V)$ are linear maps such that the following equation holds:
 \begin{eqnarray}
 \ell([x, y])-\ell([y, x])=r(x)(r(y)-\ell(y))-r(y)(r(x)-\ell(x)), \forall~ x, y\in A.\label{eq:2}
 \end{eqnarray}
 Let $(V_{A}, \ell_{A}, r_{A})$ and $(V_{B}, \ell_{B}, r_{B})$ be representations of $(A, [,]_{A})$ and $(B, [,]_{B})$ respectively. Then $(\phi, f)$ is a {\bf homomorphism from $(V_{A}, \ell_{A}, r_{A})$ of $(A, [,]_{A})$ to $(V_{B}, \ell_{B}, r_{B})$ of $(B, [,]_{B})$} if there are linear maps $\phi: V_{A}\lr V_{B}$ and $f: A\lr B$ such that $f([x, y]_{A})=[f(x), f(y)]_{B}$ and
 \begin{eqnarray} 
 &\phi(\ell_{A}(x)(v))=\ell_{B}(f(x))(\phi(v)), ~~~ \phi(r_{A}(x)(v))=r_{B}(f(x))(\phi(v)), &\label{eq:cc1}	
 \end{eqnarray}
 where $x, y\in A, v\in V$.
 \end{defi}

 \begin{pro}\label{pro:c} Let $(A, [, ])$ be a left Alia algebra, $V$ be a vector space and $\ell, r: A\lr \End(V)$ be linear maps. Then $(V, \ell, r)$ is a representation of $(A, [, ])$ if and only if there is a left Alia algebra $(A\oplus V,  [, ]_{A\oplus V})$ on the direct sum $A\oplus V$ of vector spaces, where $[, ]_{A\oplus V}$ is given by
 \begin{eqnarray}
 &[x+u, y+v]_{A\oplus V}:=[x, y]+\ell(x)v+r(y)u, \forall~ x, y\in A, u, v\in V. &\label{eq:3}
 \end{eqnarray}
 We denote this left Alia algebra by $A\ltimes_{\ell, r} V$ and call it {\bf semi-direct product of $(A, [,])$ by $(V, \ell, r)$}.
 \end{pro}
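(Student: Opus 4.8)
The plan is to verify directly that, for arbitrary linear maps $\ell, r \colon A \lr \End(V)$, the bilinear bracket $[,]_{A\oplus V}$ on $A\oplus V$ defined by \eqref{eq:3} satisfies the symmetric Jacobi identity \eqref{eq:syjacobi} if and only if \eqref{eq:2} holds; this single computation supplies both implications of the proposition. First I would take three generic elements $x+u$, $y+v$, $z+w$ of $A\oplus V$ (with $x,y,z\in A$, $u,v,w\in V$) and expand the six doubly-nested brackets appearing on the two sides of \eqref{eq:syjacobi} by applying \eqref{eq:3} twice; each such bracket yields one $A$-term and three $V$-terms.

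Summing the three terms on each side, the $A$-terms reproduce precisely the two sides of \eqref{eq:syjacobi} for $(A,[,])$, hence cancel because $(A,[,])$ is a left Alia algebra. The surviving $V$-terms are linear in each of $u$, $v$, $w$, so I would group the difference ``left-hand side minus right-hand side'' according to which of $u,v,w$ the operators act on; it then breaks into three pieces. The piece acting on $w$ is, as an operator on $V$,
\[
\ell([x,y])-\ell([y,x]) - r(x)\bigl(r(y)-\ell(y)\bigr) + r(y)\bigl(r(x)-\ell(x)\bigr),
\]
which vanishes for all $x,y\in A$ exactly when \eqref{eq:2} holds. The pieces acting on $u$ and on $v$ are obtained from this one by the relabellings $(x,y)\mapsto(y,z)$ and $(x,y)\mapsto(z,x)$ respectively, so they impose no new condition. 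Therefore the symmetric Jacobi identity for $[,]_{A\oplus V}$ is equivalent to \eqref{eq:2}, i.e.\ to $(V,\ell,r)$ being a representation, which proves the proposition (with the relevant left Alia algebra being $A\ltimes_{\ell,r}V$).

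The only delicate point is the bookkeeping: after the two expansions one must sort the eighteen $V$-terms by the argument on which $\ell$ or $r$ ultimately acts, and keep track of the order of composition of the operators. There is no conceptual obstacle here --- indeed the argument is symmetric in the two directions, so the forward and backward implications are literally the same equality read two ways.
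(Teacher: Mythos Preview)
Your proof is correct and follows the standard direct verification: expanding the symmetric Jacobi identity for $[,]_{A\oplus V}$, cancelling the $A$-part using \eqref{eq:syjacobi} for $(A,[,])$, and observing that the three $V$-pieces (acting on $w$, $u$, $v$) are cyclic relabellings of \eqref{eq:2}. The paper does not give its own proof of this proposition---it is recalled from \cite{KLWY}---so there is nothing further to compare; your argument is exactly the expected one.
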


 \begin{ex}\label{ex:d} Let $(A, [, ])$ be a left Alia algebra. Define linear maps $\mathcal{L}, \mathcal{R}: A\lr \End(A)$ by
 \begin{eqnarray}
 &\mathcal{L}(x)y=[x, y]=\mathcal{R}(y)x, \forall~ x, y \in A. &\label{eq:4}
 \end{eqnarray}
 Then $(A, \mathcal{L}, \mathcal{R})$ is a representation of $(A, [, ])$, which is called an {\bf adjoint representation}.
 \end{ex}

 In what follows, we introduce the notion of Nijenhuis left Alia algebras.

 \begin{lem} \label{lem:ff} Let $[, ], [, ]_{\cdot}: A\o A\lr A$, and $\ell_{1}, \ell_{2}, r_{1}, r_{2}: A\lr \End(V)$ be linear maps, $s, t$ be parameters. Consider the following new linear maps:
 $$
 [x, y]_{\circ}=s [x, y]+t [x, y]_{\cdot}, \ \  \ell(x)=s \ell_{1}(x)+t \ell_{2}(x), \ \ r(x)=s r_{1}(x)+t r_{2}(x), \forall~ x, y\in A.
 $$
 Then
 \begin{enumerate}[(1)]
 \item \label{it:1} $(A, [, ]_{\circ})$ is a left Alia algebra if and only if $(A, [, ])$ and $(A, [, ]_{\cdot})$ are left Alia algebras, and for all $x, y, z \in A$,
   \begin{eqnarray*}
   &&[[x, y]_{\cdot}-[y, x]_{\cdot}, z]+[[y, z]_{\cdot}-[z, y]_{\cdot}, x]+[[z, x]_{\cdot}-[x, z]_{\cdot}, y]\\
   &&+[[x, y]-[y, x], z]_{\cdot}+[[y, z]-[z, y], x]_{\cdot}+[[z, x]-[x, z], y]_{\cdot}=0.
   \end{eqnarray*}

 \item \label{it:2} $(V, \ell, r)$ is a representation of $(A, [, ]_{\circ})$ if and only if $(V, \ell_{1}, r_{1})$ is a representation of $(A, [, ])$, $(V, \ell_{2}, r_{2})$ is a representation of $(A, [, ]_{\cdot})$, and for all $x, y \in A$,
   \begin{eqnarray*}
   &&\ell_{1}([x, y]_{\cdot}-[y, x]_{\cdot})+\ell_{2}([x, y]-[y, x])=r_{2}(x)(r_{1}(y)-\ell_{1}(y))\\
   &&\qquad+r_{1}(x)(r_{2}(y)-\ell_{2}(y))-r_{2}(y)(r_{1}(x)-\ell_{1}(x))
   -r_{1}(y)(r_{2}(x)-\ell_{2}(x)).
   \end{eqnarray*}	
 \end{enumerate}
 \end{lem}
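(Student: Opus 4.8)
The plan is to prove both parts by direct expansion, exploiting the bilinearity of all the structure maps in the parameters $s$ and $t$. Since $[x,y]_\circ = s[x,y] + t[x,y]_\cdot$ and similarly for $\ell$ and $r$, substituting into the defining identities produces polynomial expressions in $s$ and $t$. The key observation is that the coefficient of $s^2$ reproduces the axiom for the first structure, the coefficient of $t^2$ reproduces the axiom for the second structure, and the cross term (coefficient of $st$) is exactly the mixed compatibility condition displayed in the statement. Because $K$ has characteristic zero and we may treat $s,t$ as independent formal parameters (or simply specialize to enough values), the vanishing of the full expression is equivalent to the simultaneous vanishing of all three coefficients.

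For part \eqref{it:1}, I would write out the symmetric Jacobi identity \eqref{eq:syjacobi} for $[,]_\circ$ at a triple $(x,y,z)$. Expanding $[[x,y]_\circ, z]_\circ$ and its cyclic permutations (and the reversed versions on the right-hand side of \eqref{eq:syjacobi}) via bilinearity, the $s^2$-part is precisely the statement that $(A,[,])$ satisfies \eqref{eq:syjacobi}, the $t^2$-part that $(A,[,]_\cdot)$ does, and the $st$-part collects the four types of mixed brackets $[[x,y]_\cdot,z]$, $[[x,y],z]_\cdot$, etc. A small bookkeeping point: one must carefully pair the mixed terms so that after moving everything to one side the $st$-coefficient reads as the displayed six-term expression; this uses that $[[x,y],z]_\cdot - [[y,x],z]_\cdot$ appears from both the left and right sides of \eqref{eq:syjacobi} and combines cleanly. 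Then invoke the formal-parameter argument to separate the coefficients.

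For part \eqref{it:2}, the strategy is identical but applied to the representation axiom \eqref{eq:2}. Substitute $\ell = s\ell_1 + t\ell_2$, $r = s r_1 + t r_2$, and $[,]_\circ = s[,] + t[,]_\cdot$ into
\[
\ell_\circ([x,y]_\circ) - \ell_\circ([y,x]_\circ) = r_\circ(x)\bigl(r_\circ(y)-\ell_\circ(y)\bigr) - r_\circ(y)\bigl(r_\circ(x)-\ell_\circ(x)\bigr).
\]
The left-hand side is already bilinear in $(s,t)$ once we note $\ell_\circ$ is applied to an element that is itself $(s,t)$-linear, so it contributes at orders $s^2$, $st$, $t^2$; the right-hand side is manifestly quadratic in $(s,t)$ since it is a product of two $(s,t)$-linear operators. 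The $s^2$-coefficient gives that $(V,\ell_1,r_1)$ represents $(A,[,])$, the $t^2$-coefficient that $(V,\ell_2,r_2)$ represents $(A,[,]_\cdot)$, and the $st$-coefficient is exactly the four-term operator identity in the statement. Again conclude by comparing coefficients over the infinite field $K$.

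The main obstacle is purely organizational rather than conceptual: in part \eqref{it:1} the symmetric Jacobi identity already has six terms on each side, so the $st$-coefficient is an eighteen-term expression that must be regrouped into the compact six-term form shown; keeping the cyclic and swap symmetries straight is where an error could creep in. In part \eqref{it:2}, the analogous care is needed in matching the two ways the cross terms $r_2(x)(r_1(y)-\ell_1(y))$ and $r_1(x)(r_2(y)-\ell_2(y))$ arise from expanding the product on the right. Neither step requires any new idea beyond disciplined expansion and the separation-of-coefficients principle, which is legitimate here because we may regard $s,t$ as generic scalars in the characteristic-zero field $K$.
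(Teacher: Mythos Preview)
Your approach is correct and is exactly the direct expansion the paper has in mind; the paper's own proof is the single word ``Straightforward.'' Your separation into $s^2$, $st$, and $t^2$ coefficients and appeal to genericity of the parameters over a field of characteristic zero is the standard way to unpack that, and your identification of the $st$-coefficients with the displayed compatibility conditions is accurate (a minor miscount: the $st$-part in \eqref{it:1} has twelve individual bracket terms, not eighteen, but this does not affect the argument).
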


 \begin{proof}  Straightforward.
 \end{proof}

 \begin{thm}\label{thm:fff} Assume that $(A, [, ]_{\circ})$, $\ell$ and $r$ are defined in Lemma \ref{lem:ff}, $(V, \ell, r)$ is a representation of $(A, [, ]_{\circ})$, $(V, \ell_{1}, r_{1})$ is a representation of $(A, [, ])$. Let $N\in \End(A)$ and $\a\in \End(V)$ be two linear maps. Then $(s \id_{A}+tN, sid_{V}+t\a)$ is a homomorphism from $(V, \ell, r)$ of $(A, [, ]_{\circ})$ to $(V, \ell_{1}, r_{1})$ of $(A, [, ])$ if and only if, for all $x, y\in A$ and $v\in V$,
 \begin{eqnarray}
 &[x, y]_{\cdot}=[N(x), y]+[x, N(y)]-N([x, y]),&\label{eq:nn1}\\
 &N([x, y]_{\cdot})=[N(x), N(y)],&\label{eq:nn2}\\
 &\ell_{2}(x)(v)=\ell_{1}(N(x))(v)+\ell_{1}(x)(\a(v))-\a(\ell_{1}(x)(v)),&\label{eq:nn3}\\
 &\a(\ell_{2}(x)(v))=\ell_{1}(N(x))(\a(v)),&\label{eq:nn4}\\
 &r_{2}(x)(v)=r_{1}(N(x))(v)+r_{1}(x)(\a(v))-\a(r_{1}(x)(v)),&\label{eq:nn5}\\
 &\a(r_{2}(x)(v))=r_{1}(N(x))(\a(v)).&\label{eq:nn6}
 \end{eqnarray}
 \end{thm}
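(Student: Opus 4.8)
The plan is to unwind the definition of a homomorphism of representations (Definition \ref{de:cc}) applied to the specific map $(\p, f) = (s\,\id_A + tN,\ s\,\id_V + t\a)$, and to compare coefficients of the monomials in the parameters $s$ and $t$. First I would write out the two defining conditions. The algebra-map condition $f([x,y]_\circ) = [f(x), f(y)]$ becomes
\begin{eqnarray*}
(s\,\id_A + tN)\bigl(s[x,y] + t[x,y]_\cdot\bigr) = [s x + tN(x),\ s y + tN(y)].
\end{eqnarray*}
Expanding both sides and grouping by powers of $s$ and $t$: the $s^2$-terms give $[x,y] = [x,y]$ (trivially true, since $(A,[,])$ is already a left Alia algebra and no new content), the $st$-terms give precisely \eqref{eq:nn1}, namely $[x,y]_\cdot = [N(x),y] + [x,N(y)] - N([x,y])$, and the $t^2$-terms give precisely \eqref{eq:nn2}, namely $N([x,y]_\cdot) = [N(x),N(y)]$. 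So the algebra-map half of the homomorphism condition is equivalent to \eqref{eq:nn1} and \eqref{eq:nn2}.

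Next I would do the same for the two module conditions in \eqref{eq:cc1}. The left action condition $\p(\ell(x)(v)) = \ell_1(f(x))(\p(v))$ becomes
\begin{eqnarray*}
(s\,\id_V + t\a)\bigl((s\ell_1(x) + t\ell_2(x))(v)\bigr) = \ell_1(s x + tN(x))\bigl((s\,\id_V + t\a)(v)\bigr).
\end{eqnarray*}
Again expand and match powers: the $s^2$-coefficient is the identity $\ell_1(x)(v) = \ell_1(x)(v)$ (no content, as $(V,\ell_1,r_1)$ is already a representation), the $st$-coefficient yields \eqref{eq:nn3}, and the $t^2$-coefficient yields \eqref{eq:nn4}. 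The right action condition $\p(r(x)(v)) = r_1(f(x))(\p(v))$ is handled identically and produces \eqref{eq:nn5} ($st$-coefficient) and \eqref{eq:nn6} ($t^2$-coefficient). Collecting all six equations, the homomorphism condition holds for all values of the parameters $s,t$ if and only if \eqref{eq:nn1}--\eqref{eq:nn6} all hold, which is the claim. Conversely, if \eqref{eq:nn1}--\eqref{eq:nn6} hold then reassembling the graded pieces recovers exactly the two identities of Definition \ref{de:cc} for $(\p,f)$, so $(\p,f)$ is a homomorphism.

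There is essentially no obstacle here beyond bookkeeping: the argument is a polynomial identity in the formal parameters $s,t$, and the only subtlety worth a remark is why one is allowed to compare coefficients. Since we want the homomorphism property to hold identically (i.e.\ the statement is about the family parametrized by $s,t$, or one simply regards $s,t$ as independent indeterminates and uses that $K$ is infinite, being of characteristic zero), the two sides agree as polynomials in $s,t$ precisely when all coefficients agree; the $s^2$-coefficients are automatically equal by the standing hypotheses that $(A,[,])$ is a left Alia algebra and $(V,\ell_1,r_1)$ is a representation of it, while the $st$- and $t^2$-coefficients give the six displayed equations. I would therefore present the proof as: expand the three homomorphism equations with the substituted maps, read off the $s^2$, $st$, $t^2$ components, note the $s^2$-components are vacuous, and identify the remaining components with \eqref{eq:nn1}--\eqref{eq:nn6}, then observe the argument is reversible. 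The mildly delicate point to state cleanly is that the $[,]_\cdot$, $\ell_2$, $r_2$ appearing here are exactly the operations built from $N$, $\a$ via \eqref{eq:nn1}, \eqref{eq:nn3}, \eqref{eq:nn5}, which is forced by the $st$-coefficients, so the theorem is simultaneously a characterization of when a homomorphism exists and an explicit formula for the deformed structure.
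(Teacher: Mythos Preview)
Your proposal is correct and follows essentially the same route as the paper: expand the three homomorphism identities from Definition \ref{de:cc} for the map $(s\,\id_A+tN,\ s\,\id_V+t\a)$, cancel the $s^2$-terms, and read off \eqref{eq:nn1}--\eqref{eq:nn6} as the $st$- and $t^2$-coefficients. The paper performs exactly these three expansions and stops, leaving the coefficient comparison implicit; your version is the same argument with the comparison step spelled out.
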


 \begin{proof}  For any $x, y\in A$, $v\in V$, according to Definition \ref{de:b}, we calculate as follows:		\begin{eqnarray*}
 &&(s \id_{A}+t N)([x, y]_{\circ})-[(s \id_{A}+t N)(x), (s \id_{A}+t N)(y)]\\
 &&\quad=(s \id_{A}+t N)(s[x, y]+t[x, y]_{\cdot})-[(s x+t N(x)), (s y+t N(y))]\\
 &&\quad=s^{2}[x, y]+s t[x, y]_{\cdot}+ t s N([x, y])+t^{2}N([x, y]_{\cdot})-s^{2}[x, y]-s t[x, N(y)]-t s[N(x), y]\\
 &&\qquad-t^{2}[N(x), N(y)]\\
 &&\quad=s t([x, y]_{\cdot}+N([x, y])-[x, N(y)]-[N(x), y])+t^{2}(N([x, y]_{\cdot})-[N(x), N(y)])=0\\
 &&(s \id_{V}+t\a)(\ell(x)(v))-\ell_{1}(s x+t N(x))(s \id_{V}+t\a)(v)\\
 &&\quad=(s \id_{V}+t\a)(s \ell_{1}(x)(v)+t \ell_{2}(x)(v))-(s \ell_{1}(x)+t \ell_{1}(N(x)))(s id_{V}+t\a)(v)\\
 &&\quad=s^{2}\ell_{1}(x)(v)+s t \ell_{2}(x)(v)+t s\a(\ell_{1}(x)(v))+t^{2}\a(t \ell_{2}(x)(v))-s^{2}\ell_{1}(x)(v)-t s \ell_{1}(N(x))(v)\\
 &&\qquad-s t \ell_{1}(x)\a(v)-t^{2}\ell_{1}(N(x))\a(v)\\
 &&\quad=s t(\ell_{2}(x)(v)+\a(\ell_{1}(x)(v))-\ell_{1}(N(x))(v)-\ell_{1}(x)\a(v))
 +t^{2}(\a(\ell_{2}(x)(v))-\ell_{1}(N(x))\a(v))\\
 &&\quad=0.
 \end{eqnarray*}
 Similarly,
 \begin{eqnarray*}
 &&(s \id_{V}+t\a)(r(x)(v))-r_{1}(s x+t N(x))(s \id_{V}+t\a)(v)\\
 &&\quad=s t(r_{2}(x)(v)+\a(r_{1}(x)v)-r_{1}(N(x))v-r_{1}(x)\a(v))
 +t^{2}(\a(r_{2}(x)(v))-r_{1}(N(x))\a(v))\\
 &&\quad=0.
 \end{eqnarray*}
 Thus we finish the proof.
 \end{proof}

 By Eqs.\eqref{eq:nn1} and \eqref{eq:nn2} we have:

 \begin{defi}\label{de:g} A {\bf Nijenhuis left Alia algebra} is a pair $((A, [, ]), N)$, where $(A, [, ])$ is a left Alia algebra and $N$ is a Nijenhuis operator, i.e., Eq.(\ref{eq:7}) holds for $[,]$.
 \end{defi}

 By Eqs.\eqref{eq:nn3}-\eqref{eq:nn6} we have:

 \begin{defi}\label{de:l} Let $((A, [, ]), N)$ be a Nijenhuis left Alia algebra. A {\bf representation} of $((A, [, ]), N)$ is a pair $((V, \ell, r), \a)$, where $(V, \ell, r)$ is a representation of $(A, [, ])$ and $\a: V\lr V$ is a linear map such that, for all $x\in A, v\in V$, the following equations hold:
 \begin{eqnarray}
 &\ell(N(x))(\a(v))+\a^{2}(\ell(x)(v))=\a(\ell(N(x))(v))+\a(\ell(x)(\a(v))),\label{eq:8}\\
 &r(N(x))(\a(v))+\a^{2}(r(x)(v))=\a(r(N(x))(v))+\a(r(x)(\a(v))).\label{eq:9}
 \end{eqnarray}

 Let $((V_{1}, \ell_{1}, r_{1}), \a_{1})$ and $((V_{2}, \ell_{2}, r_{2}), \a_{2})$ be representations of $((A, [, ]), N)$. A linear map $\phi: V_{1}\lr V_{2}$ is called a {\bf homomorphism from $((V_{1}, \ell_{1}, r_{1}), \a_{1})$ to $((V_{2}, \ell_{2}, r_{2}), \a_{2})$} if $(\phi, \id_A)$ is a homomorphism from $(V_{1}, \ell_{1}, r_{1})$ to $(V_{2}, \ell_{2}, r_{2})$ and, for all $v\in V_{1}$,
 \begin{eqnarray}\label{eq:10}
 \phi(\a_{1}(v))=\a_{2}(\phi(v)).
 \end{eqnarray}
 If $\phi$ is a bijective, then we say that the representations $((V_{1}, \ell_{1}, r_{1}), \a_{1})$ and $((V_{2}, \ell_{2}, r_{2}), \a_{2})$ are {\bf equivalent}.	
 \end{defi}

 \begin{ex}\label{ex:m}
 \begin{enumerate}[(1)]
 \item Let $((A, [, ]), N)$ be a Nijenhuis left Alia algebra. Then $((A, \mathcal{L}, \mathcal{R}), N)$ a representation of $((A, [, ]), N)$, called the {\bf adjoint representation} of $((A, [, ]), N)$.  
 	
 \item Let $((V, \rho), \a)$ be a representation of a Nijenhuis Lie algebra $((A, [, ]), N)$ \cite{LM}, then, both $((V, \rho, -\rho), \a)$ and $((V, \rho, 2\rho), \a)$ are representation of $((A, [, ]), N)$ as a Nijenhuis left Alia algebra.
 \end{enumerate}
 \end{ex}

 \begin{pro}\label{pro:n} Let $((V, \ell, r), \a)$ be a representation of a Nijenhuis left Alia algebra $((A, [, ]), N)$. Define the bracket product on $A\oplus V$ by Eq. $\eqref{eq:3}$, and a linear map $N+\a: A\oplus V\lr A\oplus V$ by
 \begin{eqnarray}
 (N+\a)(x+v):=N(x)+\a(v), \forall~ x\in A, \forall~ v\in V.\label{eq:11}
 \end{eqnarray}
 Then $((A\oplus V, [, ]_{A\oplus V}), N+\a)$ is a Nijenhuis left Alia algebra denoted by $(A\ltimes_{\ell, r}V,  N+\a)$ and called {\bf semi-direct product of $((A, [,]), N)$ by $((V, \ell, r), \a)$}.
 \end{pro}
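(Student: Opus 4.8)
The plan is to verify directly that the pair $((A\oplus V, [,]_{A\oplus V}), N+\a)$ satisfies the two defining conditions of a Nijenhuis left Alia algebra (Definition \ref{de:g}): namely that $(A\oplus V, [,]_{A\oplus V})$ is a left Alia algebra, and that $N+\a$ is a Nijenhuis operator for the bracket $[,]_{A\oplus V}$. The first condition is already known: by Proposition \ref{pro:c}, since $((V,\ell,r),\a)$ is in particular a representation of the left Alia algebra $(A,[,])$, the semi-direct product $A\ltimes_{\ell,r}V$ is a left Alia algebra. So the only real work is to check Eq.\eqref{eq:7} for $\diamond=[,]_{A\oplus V}$ and $N$ replaced by $N+\a$.

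First I would take two arbitrary elements $x+u$ and $y+v$ of $A\oplus V$ (with $x,y\in A$, $u,v\in V$), and expand both sides of
$$(N+\a)(x+u)\diamond(N+\a)(y+v)+(N+\a)^2\big((x+u)\diamond(y+v)\big)=(N+\a)\big((N+\a)(x+u)\diamond(y+v)+(x+u)\diamond(N+\a)(y+v)\big)$$
using the definitions \eqref{eq:3} and \eqref{eq:11}. Each side decomposes into an $A$-component and a $V$-component. On the $A$-component, every term involving $u$ or $v$ drops out (the bracket $[,]_{A\oplus V}$ restricted to $A$ is just $[,]$, and $N+\a$ acts as $N$ on $A$), so the $A$-component identity is exactly Eq.\eqref{eq:7} for $((A,[,]),N)$, which holds by hypothesis. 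The $V$-component is where the representation conditions \eqref{eq:8} and \eqref{eq:9} enter: collecting the terms linear in $v$ gives precisely
$$\ell(N(x))(\a(v))+\a^2(\ell(x)(v))=\a(\ell(N(x))(v))+\a(\ell(x)(\a(v))),$$
i.e.\ Eq.\eqref{eq:8}, and collecting the terms linear in $u$ gives Eq.\eqref{eq:9} after using $r(N(y))u+r(x)$-type bookkeeping. Then I would note that the mixed cross-terms (one factor from $A$, one from $V$) cancel against each other by the same linearity bookkeeping, so nothing else needs checking.

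The main obstacle — though it is bookkeeping rather than a genuine difficulty — is organizing the $V$-component expansion carefully: the bracket $[x+u,y+v]_{A\oplus V}$ contributes $\ell(x)v+r(y)u$, and applying $(N+\a)^2$ to this, versus applying $\ell(N(x))$ and $\ell(x)\a$ etc.\ to the separate pieces, produces a moderately long list of terms that must be matched term-by-term with the left- and right-hand sides of \eqref{eq:8} and \eqref{eq:9}. I would keep the $\ell$-terms (those built from the left action, carrying a $v$) and the $r$-terms (those carrying a $u$) in separate columns; each column closes up independently, the $\ell$-column by \eqref{eq:8} and the $r$-column by \eqref{eq:9}. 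Since the referenced proposition only asks us to record the construction, I would simply write ``The left Alia algebra structure follows from Proposition \ref{pro:c}, and the Nijenhuis condition for $N+\a$ reduces, after expanding \eqref{eq:3} and \eqref{eq:11}, to Eq.\eqref{eq:7} on the $A$-part together with Eqs.\eqref{eq:8} and \eqref{eq:9} on the $V$-part; hence the claim,'' possibly displaying the one representative expansion of the $\ell$-terms to make the reduction transparent.
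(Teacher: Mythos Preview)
Your direct verification is correct: the $A$-component of the Nijenhuis identity reduces to Eq.\eqref{eq:7} for $N$, and the $V$-component splits cleanly into the $\ell$-terms (linear in $v$) giving Eq.\eqref{eq:8} and the $r$-terms (linear in $u$) giving Eq.\eqref{eq:9}. One small clarification: there are no ``mixed cross-terms'' to cancel at all, since $[V,V]_{A\oplus V}=0$ in the semi-direct product and the bracket already sends $A\times V$ and $V\times A$ into $V$; the $\ell$- and $r$-columns simply exhaust the $V$-component.

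The paper takes a different route: rather than computing here, it observes that the semi-direct product is the special case of the matched pair construction (Theorem \ref{thm:an}) in which $B=V$ carries the zero bracket and $\ell_B=r_B=0$, and defers the computation to that theorem. The expansion displayed in the proof of Theorem \ref{thm:an} is essentially the same calculation you outline, just done once in the general setting. Your approach is self-contained and avoids forward reference; the paper's approach avoids repeating the computation.
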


 \begin{proof}
 It is a special case of the matched pair of Nijenhuis left Alia algebras in Theorem \ref{thm:an},
 so we omit the proof here.
 \end{proof}

 \subsection{Dual representation}
 Let $A$ and $V$ be vector spaces. For a linear map $\rho: A\lr \End(V)$, define a linear map $\rho^{*}: A\lr \End(V^{*})$ by
 \begin{eqnarray}
 &\langle \rho^{*}(x)(u^{*}), v\rangle=-\langle u^{*}, \rho(x)(v)\rangle, \forall~ x\in A, u^{*}\in V^{*}, v\in V. &\label{eq:repdual}
 \end{eqnarray} 
 By \cite{KLWY}, if $(V, \ell, r)$ is a representation of $(A, [, ])$, then $(V^{*}, \ell^{*}, \ell^{*}-r^{*})$ is a representation of $(A, [,])$ naturally. But for Nijenhuis left Alia algebra, we have
 \begin{lem}\label{lem:o} Let $((A, [, ]), N)$ be a Nijenhuis left Alia algebra, $(V, \ell, r)$ a representation of $(A, [, ])$ and $\b: V\lr V$ a linear map. Then $((V^{*}, \ell^{*}, \ell^{*}-r^{*}), \b^{*})$ is a representation of $((A, [, ]), N)$ if and only if, for all $x\in A$, $v\in V$,
 \begin{eqnarray}
 &\b(\ell(N(x))(v))+\ell(x)(\b^{2}(v))=\ell(N(x))(\b(v))+\b(\ell(x)(\b(v))),&\label{eq:13}\\
 &\b(r(N(x))(v))+r(x)(\b^{2}(v))=r(N(x))(\b(v))+\b(r(x)(\b(v))).\label{eq:14}
 \end{eqnarray}
 In particular, for a linear map $S: A\lr A$, $((A^{*}, \mathcal{L}^{*}, \mathcal{L}^{*}-\mathcal{R}^{*}), S^{*})$ is a representation of $((A, [, ]), N)$ if and only if, for all $x,y\in A$,
 \begin{eqnarray}
 &S[N(x), y]+[x, S^{2}(y)]=S[x, S(y)]+[N(x), S(y)],&\label{eq:15}\\
 &S[x, N(y)]+[S^{2}(x), y]=S[S(x), y]+[ S(x), N(y)].&\label{eq:16}
 \end{eqnarray}
 \end{lem}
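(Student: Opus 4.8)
The plan is to reduce the statement to Definition \ref{de:l} and then transport the conditions from $V^{*}$ back to $V$ via the defining pairing \eqref{eq:repdual}. By the result of \cite{KLWY} quoted just before the lemma, $(V^{*}, \ell^{*}, \ell^{*}-r^{*})$ is already a representation of the left Alia algebra $(A, [,])$; hence the only thing to verify is that the pair $((V^{*}, \ell^{*}, \ell^{*}-r^{*}), \b^{*})$ satisfies the Nijenhuis compatibility identities \eqref{eq:8} and \eqref{eq:9} (read with $\ell$ replaced by $\ell^{*}$, $r$ by $\ell^{*}-r^{*}$, $\a$ by $\b^{*}$) precisely when \eqref{eq:13} and \eqref{eq:14} hold.

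First I would handle \eqref{eq:8}. Writing \eqref{eq:8} out for $((V^{*}, \ell^{*}, \ell^{*}-r^{*}), \b^{*})$, pairing both sides with an arbitrary $v\in V$, and repeatedly using $\langle \rho^{*}(x)(u^{*}), v\rangle=-\langle u^{*}, \rho(x)(v)\rangle$ together with $\langle \b^{*}(u^{*}), v\rangle=\langle u^{*}, \b(v)\rangle$, each of the four terms acquires exactly one overall minus sign; cancelling it and using that $u^{*}\in V^{*}$ and $v\in V$ are arbitrary (and that $V^{*}$ separates points of $V$) converts \eqref{eq:8} into exactly \eqref{eq:13}. More generally, the same computation shows: for any linear $\rho:A\lr\End(V)$, the analogue of \eqref{eq:8} for $(\rho^{*},\b^{*})$ is equivalent to $\b(\rho(N(x))(v))+\rho(x)(\b^{2}(v))=\rho(N(x))(\b(v))+\b(\rho(x)(\b(v)))$.

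Next I would deal with \eqref{eq:9}. Since $\rho\mapsto\rho^{*}$ is $K$-linear we have $\ell^{*}-r^{*}=(\ell-r)^{*}$, and since \eqref{eq:9} is literally \eqref{eq:8} with $\ell$ replaced by $r$, the identity \eqref{eq:9} for $((V^{*}, \ell^{*}, \ell^{*}-r^{*}), \b^{*})$ is the analogue of \eqref{eq:8} for $((\ell-r)^{*},\b^{*})$. By the displayed equivalence above (taken with $\rho=\ell-r$) it is therefore equivalent to
\begin{eqnarray*}
\b((\ell-r)(N(x))(v))+(\ell-r)(x)(\b^{2}(v))=(\ell-r)(N(x))(\b(v))+\b((\ell-r)(x)(\b(v))),
\end{eqnarray*}
and subtracting \eqref{eq:13} from this (the $\ell$-terms cancel) leaves precisely \eqref{eq:14}. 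Thus $\{\eqref{eq:8},\eqref{eq:9}\}$ is equivalent to $\{\eqref{eq:13},\eqref{eq:14}\}$, which proves the first assertion. The ``in particular'' part then follows by specializing to the adjoint representation $(A,\mathcal{L},\mathcal{R})$ (Example \ref{ex:d}) and $\b=S$: using $\mathcal{L}(x)y=[x,y]=\mathcal{R}(y)x$, Eq. \eqref{eq:13} turns into \eqref{eq:15} and Eq. \eqref{eq:14} into \eqref{eq:16} after relabelling the two free variables.

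The argument is essentially a bookkeeping exercise; the only place that demands care is tracking the signs when transposing $\ell^{*}$ and $r^{*}$, and the observation $\ell^{*}-r^{*}=(\ell-r)^{*}$ is what keeps the verification of \eqref{eq:9} from degenerating into a longer direct computation that mixes $\ell$- and $r$-terms.
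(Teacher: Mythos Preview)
Your proposal is correct and follows essentially the same route as the paper: cite \cite{KLWY} for the underlying dual representation, then pair Eq.~\eqref{eq:8} for $(\ell^{*},\b^{*})$ against $v\in V$ via \eqref{eq:repdual} to obtain \eqref{eq:13}, and handle Eq.~\eqref{eq:9} analogously for $\ell^{*}-r^{*}=(\ell-r)^{*}$. Your subtraction step to isolate \eqref{eq:14} from the $(\ell-r)$-identity is in fact slightly more careful than the paper's ``Likewise,'' which tacitly uses \eqref{eq:13} when asserting the equivalence with \eqref{eq:14}; and your remark that a variable swap is needed to pass from \eqref{eq:14} (with $r=\mathcal{R}$) to \eqref{eq:16} is exactly right.
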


 \begin{proof} By \cite{KLWY}, we know that $(V^{*}, \ell^{*}, \ell^{*}-r^{*})$ is a representation of $(A, [, ])$. And for all $x\in A$, $v\in V$, $u^{*}\in V^{*}$,
 \begin{eqnarray*}
 &&\big\langle \ell^{*}(N(x))(\b^{*}(u^{*}))+\b^{*2}(\ell^{*}(x)(u^{*}))-\b^*(\ell^{*}(N(x))(u^{*}))
 -\b^*(\ell^{*}(x)(\b^{*}(u^{*}))), v \big\rangle\\
 &&\qquad\stackrel{\eqref{eq:repdual}}{=}\big\langle u^{*}, -\b(\ell(N(x))(v))-\ell(x)(\b^{2}(v))+\ell(N(x))(\b(v))+\b(\ell(x)(\b(v)))\big\rangle.
 \end{eqnarray*}
 So Eq.(\ref{eq:8}) holds for $\ell^*$ and $\b^*$ if and only if Eq.(\ref{eq:13}) holds. Likewise, Eq.(\ref{eq:9}) holds for $\ell^*-r^*$ and $\b^*$ if and only if Eq.(\ref{eq:14}) holds.
 \end{proof}

 \begin{defi}\label{de:p} With notations in Lemma \ref{lem:o}, if Eqs.\eqref{eq:13} and \eqref{eq:14} hold, then we say that $\b$ is {\bf admissible to $(A, [,], N)$ with respect to $(V, \ell, r)$}. If Eqs.\eqref{eq:15} and \eqref{eq:16} hold, then we say that $S$ is {\bf adjoint-admissible to $((A, [, ]), N)$} or {\bf $((A, [, ]), N)$ is $S$-adjoint-admissible}.
 \end{defi}

\subsection{Matched pairs of Nijenhuis left Alia algebras}

 Next we give the notion of a matched pair of Nijenhuis left Alia algebras which extends the one of left Alia algebras.

 \begin{pro}\label{pro:al}{\em \cite{KLWY}} Let $(A, [, ]_{A})$ and $(B, [, ]_{B})$ be left Alia algebras, $\ell_{A}, r_{A}: A\lr \End(B)$ and $\ell_{B}, r_{B}: B\lr \End(A)$ be linear maps. Define the bracket $[, ]_{A\oplus B}$ on the direct sum $A\oplus B$ of vector spaces given by
 \begin{eqnarray}\label{eq:sum}
 [x+a, y+b]_{A\oplus B}:=[x, y]_{A}+\ell_{B}(a)(y)+r_{B}(b)(x)+[a, b]_{B}+\ell_{A}(x)(b)+r_{A}(y)(a),
 \end{eqnarray}
 where $x, y\in A$ and $a, b\in B$, then $(A\oplus B, [, ]_{A\oplus B})$ is a left Alia algebra if and only if  $((A, [, ]_{A}), (B, [, ]_{B}), \ell_{A}, r_{A}, \ell_{B}, r_{B})$ is a matched pair of left Alia algebras, that is, $(B, \ell_{A}, r_{A})$ is a representation of $(A, [, ]_{A})$, $(A, \ell_{B}, r_{B})$ is a representation of $(B, [, ]_{B})$, and the following equations hold:
 \begin{eqnarray*}
 r_{B}(a)([x, y]_{A}-[y, x]_{A})
 \hspace{-3mm}&=&\hspace{-3mm}[(\ell_{B}-r_{B})(a)(y), x]_{A}+[(r_{B}-\ell_{B})(a)(x), y]_{A}\\
 &&\hspace{-3mm}+\ell_{B}((r_{A}-\ell_{A})(y)(a))(x)+\ell_{B}((\ell_{A}-r_{A})(x)(a))(y),\\
 r_{A}(x)([a, b]_{B}-[b, a]_{B})
 \hspace{-3mm}&=&\hspace{-3mm}[(\ell_{A}-r_{A})(x)(b), a]_{B}+[(r_{A}-\ell_{A})(x)(a), b]_{B}\\
 &&\hspace{-3mm}+\ell_{A}((r_{B}-\ell_{B})(b)(x))(a)+\ell_{A}((\ell_{B}-r_{B})(a)(x))(b).
 \end{eqnarray*} 
 \end{pro}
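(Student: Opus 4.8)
The statement to be proved (Proposition~\ref{pro:al}) is the ``matched pair $\Leftrightarrow$ direct sum is a left Alia algebra'' equivalence, which is attributed to \cite{KLWY}; so the natural plan is a direct bilinear computation, expanding the symmetric Jacobi identity \eqref{eq:syjacobi} for $[,]_{A\oplus B}$ and sorting the resulting terms by type. First I would note that since $[,]_{A\oplus B}$ restricted to $A\times A$ is $[,]_A$ plus a $B$-valued correction and similarly on $B\times B$, and since the symmetric Jacobi identity is multilinear, it suffices to verify \eqref{eq:syjacobi} on all triples drawn from $A\cup B$. By the symmetry of the identity under the $S_3$-action (it literally says the symmetric-group-symmetrization of $[[x,y],z]$ vanishes), the triples $(x,y,z)$ with all three in $A$, all three in $B$, two in $A$ and one in $B$, or one in $A$ and two in $B$ are the only cases, and the last two are interchanged by swapping the roles of $A$ and $B$; so there are effectively two cases to check.

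The case $x,y,z\in A$: expanding $[[x,y]_{A\oplus B},z]_{A\oplus B}$ and cyclically, the $A$-components reproduce the symmetric Jacobi identity for $(A,[,]_A)$, which holds by hypothesis, while the $B$-components collect into exactly the representation axiom \eqref{eq:2} for $(B,\ell_A,r_A)$ — this is where one uses that $(B,\ell_A,r_A)$ is a representation of $(A,[,]_A)$. By $A\leftrightarrow B$ symmetry the case $x,y,z\in B$ uses that $(A,\ell_B,r_B)$ is a representation of $(B,[,]_B)$. The mixed case, say $x,y\in A$ and $a\in B$ (the subcase with two $B$-entries and one $A$-entry being symmetric): here one expands all six bracketed terms in \eqref{eq:syjacobi}, separates $A$-valued from $B$-valued outputs, and one finds that the $A$-valued part is precisely the first displayed compatibility equation in the statement (with the roles of the variables matched appropriately), while the $B$-valued part is the second displayed equation — or more precisely, the $x,y\in A,\,a\in B$ expansion gives one of the two, and the $x\in A,\,a,b\in B$ expansion gives the other. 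So the whole proposition reduces to: (i) the two representation axioms, handling the ``pure'' triples, and (ii) the two displayed equations, handling the mixed triples, are together equivalent to the symmetric Jacobi identity for $[,]_{A\oplus B}$ on all triples.

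The main obstacle is purely organizational rather than conceptual: one must expand $[[u_1,u_2]_{A\oplus B},u_3]_{A\oplus B}$ for $u_i\in A\cup B$ using \eqref{eq:sum}, which already in the mixed case produces a large number of terms (brackets $[,]_A$, $[,]_B$, and the four actions $\ell_A,r_A,\ell_B,r_B$ each appearing several times), and then symmetrize over $S_3$ and bin the terms by whether they land in $A$ or in $B$ and by which representation/action they involve, checking that exactly the claimed identities drop out with no leftover terms. The bookkeeping is delicate because a term like $r_A(y)(a)\in B$ can reappear inside another action, and one must be careful that the antisymmetrizations $\ell_B-r_B$ and $r_B-\ell_B$ etc.\ in the displayed equations arise correctly from the difference structure of the symmetric Jacobi identity (the left-hand side minus the right-hand side of \eqref{eq:syjacobi}). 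Once the term-matching is set up carefully, though, each case is a finite check, and since this is a recollection from \cite{KLWY} I would simply indicate the structure of the computation and refer to that reference for the full verification, writing ``Straightforward, by expanding \eqref{eq:syjacobi} for $[,]_{A\oplus B}$ and comparing components; see \cite{KLWY}.''
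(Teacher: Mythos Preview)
Your overall plan and your final conclusion (``Straightforward \ldots\ see \cite{KLWY}'') match the paper exactly: the paper gives no proof at all for Proposition~\ref{pro:al}, it simply records the result with the citation \cite{KLWY}.

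One organizational slip in your sketch is worth flagging, in case you ever write out the computation in full. You say that for $x,y,z\in A$ the $B$-components of the symmetric Jacobi expression collect into the representation axiom \eqref{eq:2} for $(B,\ell_A,r_A)$. But when all three inputs lie in $A$, every iterated bracket stays in $A$: $[x,y]_{A\oplus B}=[x,y]_A\in A$ and then $[[x,y]_A,z]_{A\oplus B}=[[x,y]_A,z]_A\in A$, so there is no $B$-component at all --- the pure-$A$ case gives only the symmetric Jacobi identity for $(A,[,]_A)$. The representation axiom \eqref{eq:2} for $(B,\ell_A,r_A)$ actually appears as the \emph{$B$-component} of the mixed case $x,y\in A$, $a\in B$, while the $A$-component of that same mixed case gives the first displayed compatibility equation; symmetrically, the case $x\in A$, $a,b\in B$ yields the representation axiom for $(A,\ell_B,r_B)$ in its $A$-component and the second compatibility equation in its $B$-component. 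So the four conditions in the statement correspond to the two components of the two mixed cases, not to the pure cases. This does not affect the validity of the approach, only the bookkeeping.
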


 \begin{defi}\label{de:am} A {\bf matched pair of Nijenhuis left Alia algebras} $((A, [, ]_{A}), N_{A})$ and $((B, [, ]_{B}),$ $N_{B})$ is a six-tuple $((A, N_{A}), (B, N_{B}), \ell_{A}, r_{A}, \ell_{B}, r_{B})$, where $((B, \ell_{A}, r_{A}), N_{B})$ is a representation of $((A, [, ]_{A}), N_{A})$, $((A, \ell_{B}, r_{B}), N_{A})$ is a representation of $((B, [, ]_{B}), N_{B})$, and $((A, [, ]_{A}), (B, [, ]_{B}), \ell_{A},$ $r_{A}, \ell_{B}, r_{B})$ is a matched pair of left Alia algebras $(A, [, ]_{A})$ and $(B, [, ]_{B})$.
 \end{defi}

 \begin{thm}\label{thm:an} Let $((A, [, ]_{A}), N_{A})$ and $((B, [, ]_{B}), N_{B})$ be Nijenhuis left Alia algebras. Then $((A, N_{A}), (B, N_{B}), \ell_{A}, r_{A}, \ell_{B}, r_{B})$ is a matched pair of $((A, [, ]_{A}), N_{A})$ and $((B, [, ]_{B}), N_{B})$ if and only if $((A\oplus B, [,]_{A\oplus B}), N_{A}+N_{B})$ is a Nijenhuis left Alia algebra by defining the multiplication on $A\oplus B$ by Eq.\eqref{eq:sum} and linear map $N_{A}+N_{B}: A\oplus B\lr A\oplus B$ by
 \begin{eqnarray}
 (N_{A}+N_{B})(x+a):=N_{A}(x)+N_{B}(a)\label{eq:avnj}
 \end{eqnarray}
 for all $x\in A$, $a\in B$.
 \end{thm}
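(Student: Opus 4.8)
The plan is to reduce the statement to the already-established (non-Nijenhuis) matched-pair result, Proposition \ref{pro:al}, together with the characterization of representations of Nijenhuis left Alia algebras coming from Definition \ref{de:l} (Eqs.\eqref{eq:8}--\eqref{eq:9}), and the defining compatibility \eqref{eq:7} for the Nijenhuis operator. By Proposition \ref{pro:c} (in the bialgebra form of Proposition \ref{pro:al}), $(A\oplus B,[,]_{A\oplus B})$ is already a left Alia algebra precisely when the underlying six-tuple is a matched pair of left Alia algebras; so what remains is to show that, \emph{granting} that $(A\oplus B,[,]_{A\oplus B})$ is a left Alia algebra and that $N_A,N_B$ are Nijenhuis operators on $(A,[,]_A)$ and $(B,[,]_B)$ respectively, the map $N_A+N_B$ defined by \eqref{eq:avnj} is a Nijenhuis operator on $(A\oplus B,[,]_{A\oplus B})$ \emph{if and only if} $((B,\ell_A,r_A),N_B)$ is a representation of $((A,[,]_A),N_A)$ and $((A,\ell_B,r_B),N_A)$ is a representation of $((B,[,]_B),N_B)$. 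This is the whole content once the left Alia part is quoted.

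The key computation is to expand the Nijenhuis identity \eqref{eq:7} for $\diamond=[,]_{A\oplus B}$ and $N=N_A+N_B$ on a pair of homogeneous-type elements. First I would take $x,y\in A$: then \eqref{eq:7} for $A\oplus B$ collapses, using \eqref{eq:sum} and \eqref{eq:avnj}, to exactly the Nijenhuis identity for $(A,[,]_A)$ and $N_A$ (nothing lands in $B$ since $\ell_A,r_A$ are not invoked), which holds by hypothesis; symmetrically for $a,b\in B$. The substantive cases are the mixed ones, $x\in A$, $b\in B$ and $a\in B$, $y\in A$. Expanding \eqref{eq:7} with one argument in $A$ and one in $B$, the $A$-components of both sides produce terms built from $\ell_B,r_B$ applied to $N_A$ and $N_B$, and collecting them yields precisely Eqs.\eqref{eq:8}--\eqref{eq:9} (with $\ell=\ell_B$, $r=r_B$, $N=N_B$, $\a=N_A$) — i.e. the statement that $((A,\ell_B,r_B),N_A)$ is a representation of $((B,[,]_B),N_B)$; the $B$-components symmetrically yield that $((B,\ell_A,r_A),N_B)$ is a representation of $((A,[,]_A),N_A)$. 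One has to be a little careful that each of \eqref{eq:8} and \eqref{eq:9} appears: \eqref{eq:8} should come out of the coefficient of terms with the "left action" structural shape and \eqref{eq:9} from the "right action" shape, after using that $(A,[,]_A)$, $(B,[,]_B)$ are themselves Nijenhuis and that $(B,\ell_A,r_A)$, $(A,\ell_B,r_B)$ are ordinary representations so no cross-terms survive unaccounted for.

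The main obstacle I anticipate is purely organizational rather than conceptual: the bracket \eqref{eq:sum} has six summands, so the Nijenhuis identity on a mixed pair expands into a large number of terms living in $A\oplus B$, and one must cleanly separate $A$-components from $B$-components and then further match them against the four target identities \eqref{eq:8}, \eqref{eq:9} for the two representations. I would handle this by writing $(N_A+N_B)\big([z+c,w+d]_{A\oplus B}\big)$, $[(N_A+N_B)(z+c),(N_A+N_B)(w+d)]_{A\oplus B}$, and $(N_A+N_B)^2[z+c,w+d]_{A\oplus B}$ each as an explicit sum via \eqref{eq:sum}--\eqref{eq:avnj}, subtract, and then observe termwise that after discarding the contributions already known to vanish (the pure-$A$ Nijenhuis identity, the pure-$B$ Nijenhuis identity, and the ordinary-representation relations for $\ell_A,r_A,\ell_B,r_B$), what is left is exactly the conjunction of \eqref{eq:8}--\eqref{eq:9} for both representations. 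Since the bialgebra-side notion of matched pair (Definition \ref{de:am}) is, by definition, "matched pair of left Alia algebras $+$ both pairs are Nijenhuis representations", this establishes the equivalence and finishes the proof; I would close by noting that Proposition \ref{pro:n} is recovered by taking $B=V$ with the trivial bracket $[,]_B=0$, $\ell_B=r_B=0$, $N_B=\a$.
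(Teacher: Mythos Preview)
Your proposal is correct and follows essentially the same approach as the paper: quote Proposition \ref{pro:al} for the underlying left Alia structure, then expand the Nijenhuis defect of $N_A+N_B$ on $A\oplus B$ via \eqref{eq:sum}--\eqref{eq:avnj} and observe that it decomposes into the two pure Nijenhuis identities for $N_A$, $N_B$ (assumed) together with exactly Eqs.~\eqref{eq:8}--\eqref{eq:9} for each of the two representations. The paper carries this out on a general pair $(x+a,\,y+b)$ in one block rather than by your case split $(A,A)$, $(B,B)$, $(A,B)$, $(B,A)$, but since the Nijenhuis defect is bilinear these are equivalent organizations; one small correction is that the ``ordinary-representation relations'' you mention do not actually appear in the Nijenhuis expansion and need not be discarded.
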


 \begin{proof} It can be proved by Proposition \ref{pro:al} and the following equality
 {\small\begin{eqnarray*}
 &&\hspace{-10mm}[(N_{A}+N_{B})(x+a), (N_{A}+N_{B})(y+b)]_{A\oplus B}+(N_{A}+N_{B})^{2}([(x+a), (y+b)]_{A\oplus B})\\
 &&\hspace{-5mm}-(N_{A}+N_{B})([(N_{A}+N_{B})(x+a), (y+b)]_{A\oplus B})-(N_{A}+N_{B})([(x+a), (N_{A}+N_{B})(y+b)]_{A\oplus B})\\ 
 &=&\hspace{-3mm}[N_{A}(x), N_{A}(y)]_{A}+N_{A}^{2}([x, y]_{A})-N_{A}([x, N_{A}(y)]_{A})
 -N_{A}([N_{A}(x), y]_{A})+[N_{B}(a), N_{B}(b)]_{B}\\
 &&\hspace{-3mm}+N_{B}^{2}([a,b]_{B})-N_{B}([a, N_{B}(b)]_{B})
 -N_{B}([N_{B}(a), b]_{B})-N_{B}(\ell_{A}(x)(N_{B}(b)))+\ell_{A}(N_{A}(x))(N_{B}(b))\\ &&\hspace{-3mm}+N_{B}^{2}(\ell_{A}(x)(b))-N_{B}(\ell_{A}(N_{A}(x))(b))+r_{A}(N_{A}(y))(N_{B}(a))
 +N_{B}^{2}(r_{A}(y)(a))-N_{B}(r_{A}(y)(N_{B}(a)))\\
 &&\hspace{-3mm}-N_{B}(r_{A}(N_{A}(y))(a))-N_{A}(\ell_{B}(N_{B}(a))(y))+\ell_{B}(N_{B}(a))(N_{A}(y))
 +N_{A}^{2}(\ell_{B}(a)(y))-N_{A}(\ell_{B}(a)(N_{A}(y)))\\ &&\hspace{-3mm}+r_{B}(N_{B}(b))(N_{A}(x))+N_{A}^{2}(r_{B}(b)(x))-N_{A}(r_{B}(b)(N_{A}(x)))
 -N_{A}(r_{B}(N_{B}(b))(x)),
 \end{eqnarray*}}
 where $x, y\in A, a, b\in B$.
 \end{proof}

 \subsection{Manin triple of Nijenhuis left Alia algebras} Recall from \cite{KLWY} that a {\bf quadratic left Alia algebra} is a triple $(A, [, ], \mathcal{B})$, where  $(A, [,])$ is a left Alia algebra and $\mathcal{B}$ is a nondegenerate symmetric bilinear form on $A$ which is invariant in the sence that
 \begin{eqnarray}	\label{eq:quadratic}
 \mathcal{B}([x, y], z)=\mathcal{B}(x, [z, y]-[y, z]), \forall~ x, y, z\in A.
 \end{eqnarray}
 Since $\mathcal{B}$ is symmetric,
 \begin{eqnarray}	\label{eq:invariant}
 \mathcal{B}([x, y], z)=-\mathcal{B}(y, [x, z]),\forall~ x, y, z\in A.
 \end{eqnarray}

 \begin{pro}\label{pro:ao} Let $((A, [, ]), N)$ be a Nijenhuis left Alia algebra and $(A, [, ], \mathcal{B})$ a quadratic left Alia algebra. Assume that $\widehat{N}$ is the adjoint linear map of $N$ with respect to $\mathcal{B}$, characterized by
 \begin{eqnarray}\label{adjoint B}
 \mathcal{B}(N(x), y)=\mathcal{B}(x, \widehat{N}(y)), \forall~ x, y\in A.
 \end{eqnarray}
 Then $\widehat{N}$ is adjoint-\admt $((A, [, ]), N)$, or equivalently, $((A^{*}, \mathcal{L}^{*}, \mathcal{L}^{*}-\mathcal{R}^{*}), \widehat{N}^{*})$ is a representation of $((A, [, ]), N)$. Moreover, $((A^{*}, \mathcal{L}^{*}, \mathcal{L}^{*}-\mathcal{R}^{*}), \widehat{N}^{*})$ is equivalent to $((A, \mathcal{L}, \mathcal{R}), N)$ as representations of $((A, [, ]), N)$. Conversely, Let $((A, [, ]), N)$ be a Nijenhuis left Alia algebra and $S: A\lr A$ a linear map that is adjoint-\admt $((A, [,]), N)$. If the resulting representation $((A^{*}, \mathcal{L}^{*}, \mathcal{L}^{*}-\mathcal{R}^{*}), S^{*})$ of $((A, [, ]), N)$ is equivalent to $((A, \mathcal{L}, \mathcal{R}), N)$, then there exists a quadratic left Alia  algebra $(A, [, ], \mathcal{B})$ such that $S=\widehat{N}$.
 \end{pro}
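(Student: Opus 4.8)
The plan is to route both directions through the linear map $\Phi\colon A\to A^{*}$ defined by $\langle\Phi(x),y\rangle=\mathcal{B}(x,y)$, treating the ``left Alia part'' as already settled by the quadratic/coadjoint correspondence for left Alia algebras in \cite{KLWY} and isolating the Nijenhuis content in the single compatibility \eqref{adjoint B}. The point is that $\Phi$ will be simultaneously (i) an isomorphism of the underlying left Alia representations $(A,\mathcal{L},\mathcal{R})\to(A^{*},\mathcal{L}^{*},\mathcal{L}^{*}-\mathcal{R}^{*})$ and (ii) an intertwiner $\Phi\circ N=\widehat{N}^{*}\circ\Phi$, and that these two facts together are exactly what the statement asks for.

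For the forward direction, start from a quadratic left Alia algebra $(A,[,],\mathcal{B})$. Statement (i) is \cite{KLWY} (and also follows at once: the $\mathcal{R}$-equivariance of $\Phi$ is \eqref{eq:quadratic} read with $x,y$ interchanged, and the $\mathcal{L}$-equivariance is \eqref{eq:invariant}). For (ii) I compute, for $x,y\in A$,
\[
\langle\Phi(N(x)),y\rangle=\mathcal{B}(N(x),y)\stackrel{\eqref{adjoint B}}{=}\mathcal{B}(x,\widehat{N}(y))=\langle\Phi(x),\widehat{N}(y)\rangle=\langle\widehat{N}^{*}(\Phi(x)),y\rangle .
\]
Since $((A,\mathcal{L},\mathcal{R}),N)$ is a representation of $((A,[,]),N)$ by Example \ref{ex:m}(1), pushing the identities \eqref{eq:8}--\eqref{eq:9} forward along the bijection $\Phi$ (which intertwines the actions by (i) and the Nijenhuis maps by (ii)) shows that $((A^{*},\mathcal{L}^{*},\mathcal{L}^{*}-\mathcal{R}^{*}),\widehat{N}^{*})$ is again a representation of $((A,[,]),N)$ and that $\Phi$ realises the asserted equivalence in the sense of Definition \ref{de:l}. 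Finally, Lemma \ref{lem:o} specialised to $S=\widehat{N}$ turns the representation property of $((A^{*},\mathcal{L}^{*},\mathcal{L}^{*}-\mathcal{R}^{*}),\widehat{N}^{*})$ into Eqs. \eqref{eq:15}--\eqref{eq:16}, i.e. into $\widehat{N}$ being adjoint-\admt $((A,[,]),N)$.

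For the converse, the hypotheses already provide the representation $((A^{*},\mathcal{L}^{*},\mathcal{L}^{*}-\mathcal{R}^{*}),S^{*})$ (Definition \ref{de:p} plus Lemma \ref{lem:o}) together with an equivalence $\Phi\colon A\to A^{*}$ from $((A,\mathcal{L},\mathcal{R}),N)$ to it. Set $\mathcal{B}(x,y):=\langle\Phi(x),y\rangle$. Reversing the computation above, the left Alia equivariance of $\Phi$ yields the invariance \eqref{eq:quadratic}; surjectivity of $\Phi$ together with the fact that $A^{*}$ separates points of $A$ makes $\mathcal{B}$ nondegenerate; and the intertwining $\Phi\circ N=S^{*}\circ\Phi$ gives $\mathcal{B}(N(x),y)=\langle S^{*}\Phi(x),y\rangle=\langle\Phi(x),S(y)\rangle=\mathcal{B}(x,S(y))$ for all $x,y$, whence $S=\widehat{N}$ relative to $\mathcal{B}$ by \eqref{adjoint B}. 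The remaining point, symmetry of $\mathcal{B}$, I will take from the left Alia version of the present correspondence in \cite{KLWY}, where an equivalence between the adjoint and coadjoint representations is shown to produce a \emph{symmetric} invariant form (replacing $\Phi$ by a self-adjoint equivalence if necessary, an adjustment that disturbs neither the intertwining with $N,S$ nor nondegeneracy).

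The one genuinely delicate step is exactly this symmetry of $\mathcal{B}$ in the converse: equivariance of $\Phi$ forces invariance but not $\langle\Phi(x),y\rangle=\langle\Phi(y),x\rangle$, so one must really lean on (or reprove) the left Alia statement of \cite{KLWY}. A lesser hazard, worth flagging, is keeping apart the two dualisation conventions in play — the plain transpose $\widehat{N}^{*},S^{*}$ of a linear map $A\to A$ versus the sign-twisted dual $\rho^{*}$ of \eqref{eq:repdual} of an action $\rho\colon A\to\End(V)$ — so that the computation in the proof of Lemma \ref{lem:o} applies verbatim with $\b$ taken to be $\widehat{N}$ (resp. $S$).
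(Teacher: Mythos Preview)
Your argument is correct and, for the forward direction, genuinely different from the paper's. The paper starts from the Nijenhuis identity $0=\mathcal{B}([N(x),N(y)]+N^{2}([x,y])-N([N(x),y])-N([x,N(y)]),z)$ and unwinds it twice, once via \eqref{eq:invariant} and once via \eqref{eq:quadratic}, to read off Eqs.~\eqref{eq:15} and \eqref{eq:16} for $\widehat{N}$ directly; only afterwards does it observe that $\mathcal{B}^{\natural}$ gives the equivalence of representations. You reverse the order: you first establish that $\Phi=\mathcal{B}^{\natural}$ is an isomorphism of the underlying left Alia representations intertwining $N$ with $\widehat{N}^{*}$, and then obtain the representation property of $((A^{*},\mathcal{L}^{*},\mathcal{L}^{*}-\mathcal{R}^{*}),\widehat{N}^{*})$ by transport of structure from the adjoint representation, invoking Lemma~\ref{lem:o} only at the end to translate this into adjoint-admissibility. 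Your route is cleaner and explains \emph{why} the identities hold; the paper's buys an explicit derivation of \eqref{eq:15}--\eqref{eq:16} that does not presuppose the equivalence.

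For the converse the two arguments coincide: both define $\mathcal{B}$ from the given equivalence $\Phi$ and read off invariance and $S=\widehat{N}$ exactly as you do. The paper dispatches the remaining properties with ``by a similar argument as above'', so your explicit flag that symmetry of $\mathcal{B}$ is the one nontrivial point, to be taken from the left Alia correspondence in \cite{KLWY}, is an honest improvement rather than a gap. Do be aware, though, that your parenthetical fix (``replacing $\Phi$ by a self-adjoint equivalence if necessary'') is not automatic: from $\Phi N=S^{*}\Phi$ one does not get $\Phi^{t}N=S^{*}\Phi^{t}$ without further input, so the symmetrisation need not preserve the intertwining with $N$ and $S^{*}$. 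If you want to make this step self-contained you should argue symmetry directly from the left Alia equivariance rather than by averaging.
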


 \begin{proof} For all $x, y, z\in A$, we have {\small
 \begin{eqnarray*}
 0\hspace{-3mm}&=&\hspace{-3mm}\mathcal{B}([N(x), N(y)]+N^{2}([x, y])-N([N(x), y])-N([x, N(y)]), z)\\
 &=&\hspace{-3mm}\underline{\mathcal{B}([N(x), N(y)], z)}+\mathcal{B}(N^{2}([x, y]), z)-\mathcal{B}(N([N(x),  y]), z)-\mathcal{B}(N([x, N(y)]), z)\\ &=&\hspace{-3mm}-\mathcal{B}(N(y), [N(x), z])+\underline{\mathcal{B}([x, y], \widehat{N}^{2}(z))
 -\mathcal{B}([N(x),  y], \widehat{N}(z))-\mathcal{B}([x, N(y)], \widehat{N}(z))}\\ &\stackrel{\eqref{eq:invariant}}{=}&\hspace{-3mm}-\mathcal{B}(y, \widehat{N}([N(x), z]))-\mathcal{B}(y, [x, \widehat{N}^{2}(z)])
 +\mathcal{B}(y, [N(x), \widehat{N}(z)])+\mathcal{B}(N(y), [x, \widehat{N}(z)])\\ &=&\hspace{-3mm}-\mathcal{B}(y, \widehat{N}([N(x), z]))-\mathcal{B}(y, [x, \widehat{N}^{2}(z)])
 +\mathcal{B}(y, [N(x), \widehat{N}(z)])+\mathcal{B}(y, \widehat{N}([x, \widehat{N}(z)]))\\ &=&\hspace{-3mm}\mathcal{B}(y, -\widehat{N}([N(x), z])-[x, \widehat{N}^{2}(z)]+[N(x), \widehat{N}(z)]+\widehat{N}([x, \widehat{N}(z)])).
 \end{eqnarray*}
 }
 Then,
 $$
 \widehat{N}([N(x), y])+[x, \widehat{N}^{2}(y)]-[N(x), \widehat{N}(y)]-\widehat{N}([x, \widehat{N}(y)])=0.
 $$
 {\small
 \begin{eqnarray*}
 0\hspace{-3mm}&=&\hspace{-3mm}\mathcal{B}([N(x), N(y)]+N^{2}([x, y])-N([N(x), y])-N([x, N(y)]), z)\\ 
 &=&\hspace{-3mm}\underline{\mathcal{B}([N(x), N(y)], z)}+\mathcal{B}(N^{2}([x, y]), z)
 -\mathcal{B}(N([N(x), y]), z)-\mathcal{B}(N([x, N(y)]), z)\\ &\stackrel{\eqref{eq:quadratic}}{=}&\hspace{-3mm}\mathcal{B}(N(x), [z, N(y)]-[N(y), z])
 +\underline{\mathcal{B}([x, y], \widehat{N}^{2}(z))
 -\mathcal{B}([N(x), y], \widehat{N}(z))-\mathcal{B}([x, N(y)], \widehat{N}(z))}\\ &\stackrel{\eqref{eq:quadratic}}{=}&\hspace{-3mm}\mathcal{B}(N(x), [z, N(y)]-[N(y), z])+\mathcal{B}(x, [\widehat{N}^{2}(z), y]
 -[y, \widehat{N}^{2}(z)])-\mathcal{B}(N(x), [\widehat{N}(z), y]-[y, \widehat{N}(z)])\\ &&\hspace{-3mm}-\mathcal{B}(x, [\widehat{N}(z), N(y)]-[N(y), \widehat{N}(z)])\\ &=&\hspace{-3mm}\mathcal{B}(x, \widehat{N}([z, N(y)]))-\mathcal{B}(x, \widehat{N}([N(y), z]))
 +\mathcal{B}(x, [\widehat{N}^{2}(z), y])-\mathcal{B}(x, [y, \widehat{N}^{2}(z)])
 -\mathcal{B}(x, \widehat{N}([\widehat{N}(z), y]))\\ 
 &&\hspace{-3mm}+\mathcal{B}(x, \widehat{N}([y, \widehat{N}(z)]))
 -\mathcal{B}(x, [\widehat{N}(z), N(y)])
 +\mathcal{B}(x, [N(y), \widehat{N}(z)])\\ 
 &=&\hspace{-3mm}\mathcal{B}(x, \widehat{N}([z, N(y)])+[\widehat{N}^{2}(z), y]
 -\widehat{N}([\widehat{N}(z), y])-[\widehat{N}(z), N(y)]).
 \end{eqnarray*}
 }
 Then, $$\widehat{N}([x, N(y)])+[\widehat{N}^{2}(x), y]-\widehat{N}([\widehat{N}(x), y])
 -[\widehat{N}(x), N(y)]=0.$$
 By Lemma $\ref{lem:o}$, $((A^{*}, \mathcal{L}^{*}, \mathcal{L}^{*}-\mathcal{R}^{*}), \widehat{N}^{*})$ is a representation of $((A, [, ]), N)$.

 Next, by \cite[Lemma 3]{KLWY}, $(A^{*}, \mathcal{L}^{*}, \mathcal{L}^{*}-\mathcal{R}^{*})$ is equivalent to $(A, \mathcal{L}, \mathcal{R})$ as representations of $(A, [, ])$. We define the linear map $\mathcal{B}^{\natural}:A\lr A^{*}$  by
 $$\mathcal{B}^{\natural}(x)(y)=\langle \mathcal{B}^{\natural}(x),y\rangle =\mathcal{B}(x, y), $$
 for all $x, y\in A$.
 \begin{eqnarray*}	
 \mathcal{B}^{\natural}(N(x))(y)
 \hspace{-3mm}&=&\hspace{-3mm}\langle \mathcal{B}^{\natural}(N(x)), y\rangle
 =\mathcal{B}(N(x), y)=\mathcal{B}(x, \widehat{N}(y))\\
 \hspace{-3mm}&=&\hspace{-3mm}\langle \mathcal{B}^{\natural}(x), \widehat{N}(y)\rangle=\langle \widehat{N}^{*}(\mathcal{B}^{\natural}(x)), y\rangle =\widehat{N}^{*}(\mathcal{B}^{\natural}(x))(y).
 \end{eqnarray*}
 Hence, $((A^{*}, \mathcal{L}^{*}, \mathcal{L}^{*}-\mathcal{R}^{*}), \widehat{N}^{*})$ is equivalent to $((A, \mathcal{L}, \mathcal{R}), N)$ as representations of $((A, [, ]), N)$.

 Conversely, suppose that $\mathcal{B}^{\natural}: A\lr A^{*}$ is the linear isomorphism giving by the equivalence between $((A^{*}, \mathcal{L}^{*}, \mathcal{L}^{*}-\mathcal{R}^{*}), S^{*})$ and $((A, \mathcal{L}, \mathcal{R}), N)$. Define a bilinear form $\mathcal{B}$ on $A$ by
 $$\mathcal{B}(x, y)=\langle \mathcal{B}^{\natural}(x), y\rangle,$$
 for all $x, y\in A$. Then by a similar argument as above, there exists a quadratic left Alia algebra $(A, [, ], \mathcal{B})$ such that $S=\widehat{N}$.
 \end{proof}

 \begin{defi}\label{eq:ap}\cite{KLWY} Let $(A, [, ]_{A})$ and $(A^{*}, [, ]_{A^{*}})$ be two left Alia algebras. Assume that there is a left Alia algebras $(A\oplus A^{*}, [, ]_{A\oplus A^{*}})$ which contains $(A, [, ]_{A})$ and $(A^{*}, [, ]_{A^{*}})$ as left Alia subalgebras. Suppose that the natural nondegenerate symmetric bilinear form $\mathcal{B}$, given by
 \begin{eqnarray}\label{eq:naturalb}
 \mathcal{B}(x+a^{*}, y+b^{*})=\langle a^{*}, y\rangle+\langle x, b^{*}\rangle, \forall~ x, y\in A,  a^{*}, b^{*}\in A^{*},
 \end{eqnarray} 	
 is invariant on $(A\oplus A^{*}, [, ]_{A\oplus A^{*}})$, that is, $(A\oplus A^{*}, [, ]_{A\oplus A^{*}}, \mathcal{B})$ is a quadratic left Alia algebra. Then, we say that $((A\oplus A^{*}, [, ]_{A\oplus A^{*}}, \mathcal{B}), A, A^{*})$ is a {\bf Manin triple of left Alia algebras associated to $(A, [, ]_{A})$ and $(A^{*}, [, ]_{A^{*}})$}.
 \end{defi}

 Now we extend this notion to Nijenhuis left Alia algebras.

 \begin{defi}\label{de:aq} Let $((A, [, ]_{A}), N)$ and $((A^{*}, [, ]_{A^{*}}), S^{*})$ be two Nijenhuis left Alia algebras. A {\bf Manin triple $((A\oplus A^{*}, [, ]_{A\oplus A^{*}},  N+S^{*}, \mathcal{B}), (A, N), (A^{*}, S^{*}))$ of Nijenhuis left Alia algebras associated to $((A,[, ]_{A}),N)$ and $((A^{*}, [, ]_{A^{*}}), S^{*})$} is a Manin triple $((A\oplus A^{*}, [, ]_{A\oplus A^{*}}, \mathcal{B}), A, A^{*})$ of left Alia algebras associated to $(A, [, ]_{A})$ and $(A^{*}, [, ]_{A^{*}})$ such that $((A\oplus A^{*}, [, ]_{A\oplus A^{*}}), N+S^{*})$ is a Nijenhuis left Alia algebra.
 \end{defi}

 \begin{lem}\label{lem:ar} Let $((A\oplus A^{*}, [, ]_{A\oplus A^{*}}, N+S^{*}, \mathcal{B}), (A, N), (A^{*}, S^{*}))$ be a  Manin triple of Nijenhuis left Alia algebras associated to $((A,[, ]_{A}),N)$ and $((A^{*}, [, ]_{A^{*}}), S^{*})$.
 \begin{enumerate}[(1)]
 \item \label{it:lem:ar1} The adjoint linear map $\widehat{N+S^{*}}$ of $N+S^{*}$ with respect to $\mathcal{B}$ is  $S+N^{*}$. Furthermore,  $S+N^{*}$ is adjoint-\admt $((A\oplus A^{*}, [,]_{A\oplus A^{*}}), N+S^{*})$.

 \item \label{it:lem:ar2} $S$ is adjoint-\admt $((A, [, ]_{A}), N)$.
	
 \item \label{it:lem:ar3} $N^{*}$ is adjoint-\admt $((A^{*}, [, ]_{A^{*}}), S^{*})$.
 \end{enumerate}
 \end{lem}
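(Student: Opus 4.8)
\textbf{Proof proposal for Lemma \ref{lem:ar}.}

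The plan is to exploit the fact that, inside the Manin triple, the bilinear form $\mathcal{B}$ in \eqref{eq:naturalb} identifies $A^{*}$ with the dual of $A$ and $A$ with the dual of $A^{*}$, so that taking $\mathcal{B}$-adjoints interchanges $A$ and $A^{*}$ and turns the "starred" maps back into the original ones. First, for \eqref{it:lem:ar1}, I would verify directly from \eqref{adjoint B} that $\widehat{N+S^{*}}=S+N^{*}$: for $x,y\in A$ and $a^{*},b^{*}\in A^{*}$, compute
\begin{eqnarray*}
\mathcal{B}\big((N+S^{*})(x+a^{*}),\,y+b^{*}\big)
&=&\mathcal{B}\big(N(x)+S^{*}(a^{*}),\,y+b^{*}\big)
=\langle S^{*}(a^{*}),y\rangle+\langle N(x),b^{*}\rangle\\
&=&\langle a^{*},S(y)\rangle+\langle x,N^{*}(b^{*})\rangle
=\mathcal{B}\big(x+a^{*},\,S(y)+N^{*}(b^{*})\big),
\end{eqnarray*}
which is exactly $\mathcal{B}(x+a^{*},(S+N^{*})(y+b^{*}))$. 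Since $((A\oplus A^{*},[,]_{A\oplus A^{*}},\mathcal{B}),A,A^{*})$ is a quadratic left Alia algebra and $((A\oplus A^{*},[,]_{A\oplus A^{*}}),N+S^{*})$ is a Nijenhuis left Alia algebra, Proposition \ref{pro:ao} (applied to the Nijenhuis left Alia algebra $A\oplus A^{*}$ with its invariant form $\mathcal{B}$) tells us immediately that $\widehat{N+S^{*}}=S+N^{*}$ is adjoint-admissible to $((A\oplus A^{*},[,]_{A\oplus A^{*}}),N+S^{*})$; this gives \eqref{it:lem:ar1}.

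For \eqref{it:lem:ar2} and \eqref{it:lem:ar3}, the idea is to restrict the adjoint-admissibility identities \eqref{eq:15}--\eqref{eq:16} for $S+N^{*}$ on $A\oplus A^{*}$ to the subalgebras $A$ and $A^{*}$. Recall $A$ and $A^{*}$ are left Alia subalgebras of $A\oplus A^{*}$ and that $N+S^{*}$ preserves each of them (it restricts to $N$ on $A$ and to $S^{*}$ on $A^{*}$), while $S+N^{*}$ restricts to $S$ on $A$ and to $N^{*}$ on $A^{*}$. Plugging $x,y\in A\subseteq A\oplus A^{*}$ into the pair of equations
\begin{eqnarray*}
&&(S+N^{*})[(N+S^{*})(x),y]_{A\oplus A^{*}}+[x,(S+N^{*})^{2}(y)]_{A\oplus A^{*}}
=(S+N^{*})[x,(S+N^{*})(y)]_{A\oplus A^{*}}+[(N+S^{*})(x),(S+N^{*})(y)]_{A\oplus A^{*}},\\
&&(S+N^{*})[x,(N+S^{*})(y)]_{A\oplus A^{*}}+[(S+N^{*})^{2}(x),y]_{A\oplus A^{*}}
=(S+N^{*})[(S+N^{*})(x),y]_{A\oplus A^{*}}+[(S+N^{*})(x),(N+S^{*})(y)]_{A\oplus A^{*}},
\end{eqnarray*}
all brackets land back in $A$ (since $A$ is a subalgebra), all occurrences of $N+S^{*}$ become $N$ and all occurrences of $S+N^{*}$ become $S$, so these collapse exactly to \eqref{eq:15}--\eqref{eq:16} for $((A,[,]_{A}),N)$ and $S$; that is \eqref{it:lem:ar2}. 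Symmetrically, restricting to $a^{*},b^{*}\in A^{*}$ turns $N+S^{*}$ into $S^{*}$ and $S+N^{*}$ into $N^{*}$, yielding \eqref{eq:15}--\eqref{eq:16} for $((A^{*},[,]_{A^{*}}),S^{*})$ and $N^{*}$, which is \eqref{it:lem:ar3}.

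I expect the only genuinely delicate point to be making sure that the adjoint-admissibility conditions really do have this "restriction-friendly" shape — i.e., that \eqref{eq:15}--\eqref{eq:16} for the big algebra, read on elements of a subalgebra which the relevant operators preserve, give back precisely \eqref{eq:15}--\eqref{eq:16} for the subalgebra, with no leftover cross terms. Since every bracket $[\,,]_{A\oplus A^{*}}$ of two elements of $A$ (resp. of $A^{*}$) stays in $A$ (resp. in $A^{*}$) and both $N+S^{*}$ and $S+N^{*}$ are block-diagonal with respect to the decomposition $A\oplus A^{*}$, this is automatic, so the argument is essentially bookkeeping once \eqref{it:lem:ar1} is in place; the substance is in identifying $\widehat{N+S^{*}}$, which the computation above settles.
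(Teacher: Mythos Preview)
Your proposal is correct and follows essentially the same route as the paper: you compute $\widehat{N+S^{*}}=S+N^{*}$ directly from \eqref{eq:naturalb} and then invoke Proposition~\ref{pro:ao} for part~\eqref{it:lem:ar1}, and for parts~\eqref{it:lem:ar2} and~\eqref{it:lem:ar3} you restrict the adjoint-admissibility identities \eqref{eq:15}--\eqref{eq:16} on $A\oplus A^{*}$ to the block-diagonal subalgebras $A$ and $A^{*}$---which is exactly the paper's ``set $a^{*}=b^{*}=0$'' (resp.\ ``set $x=y=0$'') argument. Your closing remark that no cross terms appear because both $N+S^{*}$ and $S+N^{*}$ preserve the decomposition is the right justification for why this restriction is clean.
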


 \begin{proof} \ref{it:lem:ar1} For all $x, y \in A$, $a^{*}, b^{*}\in A^{*}$, we have
 \begin{eqnarray*}
 \mathcal{B}((N+S^{*})(x+a^{*}), y+b^{*})
 \hspace{-3mm}&=&\hspace{-3mm}\mathcal{B}(N(x)+S^{*}(a^{*}), y+b^{*})=\langle S^{*}(a^{*}), y\rangle+\langle N(x), b^{*}\rangle\\
 \hspace{-3mm}&=&\hspace{-3mm}\langle a^{*}, S(y)\rangle+\langle x, N^{*}(b^{*})\rangle=\mathcal{B}(x+a^{*}, S(y)+N^{*}(b^{*}))\\
 \hspace{-3mm}&=&\hspace{-3mm}\mathcal{B}(x+a^{*}, (S+N^{*})(y+b^{*})).
 \end{eqnarray*}
 Hence, $\widehat{N+S^{*}}=S+N^{*}$. Furthermore, by Proposition $\ref{pro:ao}$, $S+N^{*}$ is adjoint-\admt $((A\oplus A^{*}, [, ]_{A\oplus A^{*}}), N+S^{*})$.
	
 \ref{it:lem:ar2} By Item \ref{it:lem:ar1}, $S+N^{*}$ is adjoint-\admt $((A\oplus A^{*}, [, ]_{A\oplus A^{*}}), N+S^{*})$. Then
 \begin{eqnarray*}
 &&(S+N^{*})[(N+S^{*})(x+a^{*}), y+b^{*}]_{A\oplus A^{*}}-(S+N^{*})[x+a^{*}, (S+N^{*})(y+b^{*})]_{A\oplus A^{*}}\\
 &&+[(x+a^{*}), (S+N^{*})^{2}(y+b^{*})]_{A\oplus A^{*}}-[(N+S^{*})(x+a^{*}), (S+N^{*})(y+b^{*})]_{A\oplus A^{*}}=0,\\
 &&(S+N^{*})[x+a^{*}, (N+S^{*})(y+b^{*})]_{A\oplus A^{*}}-(S+N^{*})[(S+N^{*})(x+a^{*}), y+b^{*}]_{A\oplus A^{*}}\\
 &&+[(S+N^{*})^{2}(x+a^{*}), y+b^{*}]_{A\oplus A^{*}}-[(S+N^{*})(x+a^{*}), (N+S^{*})(y+b^{*})]_{A\oplus A^{*}}=0.
 \end{eqnarray*}
 Now taking $a^{*}=b^{*}=0$ in the equations above give the admissibility of $S$ to $((A, [, ]_{A}), N)$.
	
 \ref{it:lem:ar3} Taking $x=y=0$ in the equations above give the admissibility of $N^{*}$ to $((A^{*}, [, ]_{A^{*}}), S^{*})$.
 \end{proof} 
 
 More generally, we have

 \begin{thm}\label{thm:at} Let $((A,[, ]_{A}),N)$ and $((A^{*}, [, ]_{A^{*}}), S^{*})$ be Nijenhuis left Alia algebras. Then there is a Manin triple $((A\oplus A^{*}, [, ]_{A\oplus A^{*}}, N+S^{*}, \mathcal{B}), (A, N), (A^{*}, S^{*}))$ of $((A,[, ]_{A}),N)$ and $((A^{*}, [, ]_{A^{*}}), S^{*})$ if and only if $((A, N), (A^{*}, S^{*}), \mathcal{L_{A}^{*}}, \mathcal{L_{A}^{*}}-\mathcal{R_{A}^{*}}, \mathcal{L_{A^{*}}^{*}}, \mathcal{L_{A^{*}}^{*}}-\mathcal{R_{A^{*}}^{*}})$ is a matched pair of $((A,[, ]_{A}),N)$ and $((A^{*}, [, ]_{A^{*}}), S^{*})$.
 \end{thm}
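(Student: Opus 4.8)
The plan is to obtain the equivalence by assembling three things that are already in hand: Definition \ref{de:aq} (which unpacks what a Manin triple of Nijenhuis left Alia algebras is), the structure theorem of \cite{KLWY} identifying Manin triples of left Alia algebras with matched pairs built on the coadjoint-type representations, and the matched-pair characterization of the Nijenhuis property furnished by Theorem \ref{thm:an} (made concrete in one direction by Lemmas \ref{lem:o} and \ref{lem:ar}).

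First I would strip away the non-Nijenhuis content. By Definition \ref{de:aq}, the existence of a Manin triple $((A\oplus A^*, [,]_{A\oplus A^*}, N+S^*, \mathcal{B}), (A, N), (A^*, S^*))$ is equivalent to the existence of a Manin triple $((A\oplus A^*, [,]_{A\oplus A^*}, \mathcal{B}), A, A^*)$ of left Alia algebras on which the linear map $N+S^*$, given by $(N+S^*)(x+a^*)=N(x)+S^*(a^*)$, is a Nijenhuis operator. By the left Alia analogue of the Manin-triple/matched-pair correspondence established in \cite{KLWY}, such a Manin triple of left Alia algebras exists exactly when $((A, [,]_A), (A^*, [,]_{A^*}), \mathcal{L}_A^*, \mathcal{L}_A^*-\mathcal{R}_A^*, \mathcal{L}_{A^*}^*, \mathcal{L}_{A^*}^*-\mathcal{R}_{A^*}^*)$ is a matched pair of left Alia algebras, in which case the bracket $[,]_{A\oplus A^*}$ is forced to be \eqref{eq:sum} for these representations and $\mathcal{B}$ is the canonical pairing \eqref{eq:naturalb}. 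Hence the theorem reduces to the assertion that, given this matched pair of left Alia algebras, $N+S^*$ is a Nijenhuis operator on $(A\oplus A^*, [,]_{A\oplus A^*})$ if and only if $((A, N), (A^*, S^*), \mathcal{L}_A^*, \mathcal{L}_A^*-\mathcal{R}_A^*, \mathcal{L}_{A^*}^*, \mathcal{L}_{A^*}^*-\mathcal{R}_{A^*}^*)$ is a matched pair of Nijenhuis left Alia algebras.

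This remaining assertion is Theorem \ref{thm:an} applied with $B=A^*$, $N_A=N$, $N_B=S^*$, $\ell_A=\mathcal{L}_A^*$, $r_A=\mathcal{L}_A^*-\mathcal{R}_A^*$, $\ell_B=\mathcal{L}_{A^*}^*$, $r_B=\mathcal{L}_{A^*}^*-\mathcal{R}_{A^*}^*$: by Proposition \ref{pro:al} the bracket \eqref{eq:sum} is a left Alia bracket precisely because the matched pair of left Alia algebras is in force, and Theorem \ref{thm:an} then equates the Nijenhuis property of $N+S^*$ on it with the two representation conditions of Definition \ref{de:am}. In the direction from the Manin triple to the matched pair these two conditions can also be read off directly: Lemma \ref{lem:ar}(2) gives that $S$ is adjoint-admissible to $((A, [,]_A), N)$, so by the ``in particular'' clause of Lemma \ref{lem:o} that $((A^*, \mathcal{L}_A^*, \mathcal{L}_A^*-\mathcal{R}_A^*), S^*)$ is a representation of $((A, [,]_A), N)$; and Lemma \ref{lem:ar}(3) gives that $N^*$ is adjoint-admissible to $((A^*, [,]_{A^*}), S^*)$, so by Lemma \ref{lem:o} applied on the $A^*$ side (using $A^{**}\cong A$ and $N^{**}=N$) that $((A, \mathcal{L}_{A^*}^*, \mathcal{L}_{A^*}^*-\mathcal{R}_{A^*}^*), N)$ is a representation of $((A^*, [,]_{A^*}), S^*)$. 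Chaining these reductions would complete the proof in both directions.

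I do not anticipate a genuine obstacle, since the substantive work has already been done in Theorem \ref{thm:an} and Lemmas \ref{lem:o}--\ref{lem:ar}, and the proof of Theorem \ref{thm:at} is essentially a bookkeeping assembly of these with Definitions \ref{de:am} and \ref{de:aq}. The only points requiring care are the double-dual identifications ($A\cong A^{**}$, $N=N^{**}$, and keeping $S: A\to A$ distinct from $S^*: A^*\to A^*$) when matching the second representation condition against Lemma \ref{lem:o}, and invoking the \cite{KLWY} structure result in the precise form ``Manin triple of left Alia algebras $\Leftrightarrow$ matched pair of left Alia algebras on the coadjoint-type representations'', which is what pins down $[,]_{A\oplus A^*}$ and $\mathcal{B}$ in the first reduction.
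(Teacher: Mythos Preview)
Your proposal is correct and follows essentially the same approach as the paper: reduce via Definition \ref{de:aq} and \cite[Theorem 5]{KLWY} to the underlying matched pair of left Alia algebras, then use Theorem \ref{thm:an} (or, in the forward direction, Lemma \ref{lem:ar} combined with Lemma \ref{lem:o}) to identify the Nijenhuis condition on $N+S^*$ with the two representation conditions in Definition \ref{de:am}. The paper's own proof is slightly asymmetric---invoking Lemma \ref{lem:ar} for $(\Rightarrow)$ and Theorem \ref{thm:an} for $(\Leftarrow)$---but this is exactly the assembly you describe.
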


 \begin{proof} ($\Longrightarrow$) If $((A\oplus A^{*}, [, ]_{A\oplus A^{*}}, N+S^{*},\mathcal{B}), (A, N), (A^{*}, S^{*}))$ is a Manin triple of $((A,[, ]_{A}),N)$ and $((A^{*}, [, ]_{A^{*}}), S^{*})$, then by Definition $\ref{de:aq}$, we know that $((A\oplus A^{*}, [, ]_{A\oplus A^{*}}, \mathcal{B}), A, A^{*})$ is a Manin triple of $(A, [, ]_{A})$ and $(A^{*}, [, ]_{A^{*}})$. By \cite[Theorem 5]{KLWY}, 
 $((A, [, ]_{A}), (A^{*}, [, ]_{A^{*}}), \mathcal{L_{A}^{*}}, \mathcal{L_{A}^{*}}-\mathcal{R_{A}^{*}}, \mathcal{L_{A^{*}}^{*}}, \mathcal{L_{A^{*}}^{*}}-\mathcal{R_{A^{*}}^{*}})$ is a matched pair of $(A, [, ]_{A})$ and $(A^{*}, [, ]_{A^{*}})$. Furthermore, by Lemma $\ref{lem:ar}$, $((A^{*}, \mathcal{L_{A}^{*}}, \mathcal{L_{A}^{*}}-\mathcal{R_{A}^{*}}), S^{*})$ is a representation of $((A, [, ]_A), N)$ and $((A, \mathcal{L_{A^{*}}^{*}}, \mathcal{L_{A^{*}}^{*}}- \mathcal{R_{A^{*}}^{*}}), N)$ is a representation of $((A^{*}, [, ]_{A^{*}}), S^{*})$.  Therefore,  $((A, N), (A^{*}, S^{*}), \mathcal{L_{A}^{*}}, \mathcal{L_{A}^{*}}-\mathcal{R_{A}^{*}}, \mathcal{L_{A^{*}}^{*}}, \mathcal{L_{A^{*}}^{*}}-\mathcal{R_{A^{*}}^{*}})$ is a matched pair of $((A,[, ]_{A}),N)$ and $((A^{*}, [, ]_{A^{*}}), S^{*})$.

 ($\Longleftarrow$) If $((A, N), (A^{*}, S^{*}), \mathcal{L_{A}^{*}}, \mathcal{L_{A}^{*}}-\mathcal{R_{A}^{*}}, \mathcal{L_{A^{*}}^{*}},    \mathcal{L_{A^{*}}^{*}}-\mathcal{R_{A^{*}}^{*}})$ is a matched pair of $((A,[, ]_{A}),N)$ and $((A^{*}, [, ]_{A^{*}}), S^{*})$, then by Definition $\ref{de:am}$, $((A, [, ]_{A}), (A^{*}, [, ]_{A^{*}}), \mathcal{L_{A}^{*}}, \mathcal{L_{A}^{*}}-\mathcal{R_{A}^{*}}, \mathcal{L_{A^{*}}^{*}}, \mathcal{L_{A^{*}}^{*}}-\mathcal{R_{A^{*}}^{*}})$ is a matched pair of $(A, [, ]_{A})$ and $(A^{*}, [, ]_{A^{*}})$. By \cite[Theorem 5]{KLWY}, 
 $((A\oplus A^{*}, [, ]_{A\oplus A^{*}}, \mathcal{B}), A,$ $A^{*})$ is a Manin triple of $(A, [, ]_{A})$ and $(A^{*}, [, ]_{A^{*}})$. Furthermore, by Theorem $\ref{thm:an}$, $((A\oplus A^{*}, [, ]_{A\oplus A^{*}}), N+S^{*})$ is a Nijenhuis left Alia algebra.  Hence, $((A\oplus A^{*}, [, ]_{A\oplus A^{*}}, N+S^{*}, \mathcal{B}), (A, N),$ $(A^{*}, S^{*}))$ is a Manin triple of $((A,[, ]_{A}),N)$ and $((A^{*}, [, ]_{A^{*}}), S^{*})$.
 \end{proof}

\subsection{Nijenhuis left Alia bialgebras} Recall from \cite{KLWY} that a {\bf left Alia coalgebra} is a pair $(A, \D)$, such that $A$ is a vector space and $\D: A\lr A\o A$ (here we use Sweedler notation $\D(x)=x_{(1)}\o x_{(2)}, \forall~ x\in A$) is a linear map satisfying
 \begin{eqnarray}
 &&x_{(1)(2)}\o x_{(1)(1)}\o x_{(2)}+x_{(1)(1)}\o x_{(2)}\o x_{(1)(2)}+x_{(2)}\o x_{(1)(2)}\o x_{(1)(1)}\nonumber\\
 &&\hspace{15mm}=x_{(1)(1)}\o x_{(1)(2)}\o x_{(2)}+x_{(1)(2)}\o x_{(2)}\o x_{(1)(1)}+x_{(2)}\o x_{(1)(1)}\o x_{(1)(2)}, \label{eq:cola}
 \end{eqnarray}
 for all $x\in A$.  A {\bf left Alia bialgebra} is a triple $(A, [, ], \D)$, such that $(A, [, ])$ is a left Alia algebra, $(A, \D)$ is a left Alia coalgebra and the following equation holds:
 \begin{eqnarray}
 (\tau-\id^{\o 2})(\D([x, y]-[y, x]))=(\tau-\id^{\o 2})((\mathcal{R}(y)\o \id)\D(x)-(\mathcal{R}(x)\o \id )\D(y)), \forall~ x, y\in A.\label{eq:labia}
 \end{eqnarray}	 
 
 Dual to Nijenhuis left Alia algebra, we have
 \begin{defi}\label{de:ah} 	A {\bf Nijenhuis left Alia coalgebra} is a pair $((A, \D), S)$, where $(A, \D)$ is a left Alia coalgebra and $S$ is a Nijenhuis operator on $(A, \D)$, i.e., the following identity holds:
 \small{\begin{eqnarray}\label{eq:nlaca}
 S(x_{(1)})\o S(x_{(2)})+S^{2}(x)_{(1)}\o S^{2}(x)_{(2)}=S(S(x)_{(1)})\o S(x)_{(2)}+S(x)_{(1)}\o S(S(x)_{(2)}), \forall~ x\in A.
 \end{eqnarray}}
 \end{defi}

 For a finite dimensional vector space $A$, $((A^{*}, [, ]_{A^{*}}), S^{*})$ is a Nijenhuis left Alia algebra if and only if $((A, \D), S)$ is a Nijenhuis left Alia coalgebra, where $\D: A\lr A\o A$ is a linear dual of $[, ]_{A^{*}}: A^{*}\o A^{*}\lr A^{*}$, that is, for all $x\in A$, $a^{*}, b^{*}\in A^{*}$,
 \begin{equation*}
 \langle a^{*}\o b^{*}, \D(x) \rangle=\langle [a^{*}, b^{*}]_{A^{*}}, x\rangle.
 \end{equation*}
 Moreover, for a linear map $N: A\lr A$, the condition that $N^{*}$ is adjoint-\admt the Nijenhuis left Alia algebra $((A^{*}, [, ]_{A^{*}}), S^{*})$ can be rewritten in terms of $\D$ as
 \begin{eqnarray}
 &&S(N(x)_{(1)})\o N(x)_{(2)}+x_{(1)}\o N^{2}(x_{(2)})=S(x_{(1)})\o N(x_{(2)}) + N(x)_{(1)}\o N(N(x)_{(2)}),\label{eq:laacaadmi1}\\
 &&N(x)_{(1)}\o S(N(x)_{(2)})+N^{2}(x_{(1)})\o x_{(2)}= N(x_{(1)})\o S(x_{(2)}) +N(N(x)_{(1)})\o N(x)_{(2)},\label{eq:laacaadmi2}
 \end{eqnarray}
 where $\forall~x\in A$.

 \begin{defi}\label{de:au} A {\bf Nijenhuis left Alia bialgebra} is a vector space $A$ together with linear maps $[, ]: A\o A\lr A$, $\D: A\lr A\o A$, $N, S : A\lr A$ such that
 \begin{enumerate}[(1)]
 \item\label{it:de:au1} $(A, [, ], \D)$ is a left Alia bialgebra.
 \item\label{it:de:au2} $((A, [, ]), N)$ is a Nijenhuis left Alia algebra.
 \item\label{it:de:au3} $((A, \D), S)$ is a Nijenhuis left Alia coalgebra.
 \item\label{it:de:au4} $S$ is adjoint-\admt $((A, [, ]), N)$, that is, Eqs.\eqref{eq:15}-\eqref{eq:16} hold.
 \item\label{it:de:au5} $N^{*}$ is adjoint-\admt $((A^{*}, \D^{*}), S^{*})$, that is, Eqs.\eqref{eq:laacaadmi1}-\eqref{eq:laacaadmi2} hold.
 \end{enumerate}
 We denote this Nijenhuis left Alia bialgebra by $((A, [, ], \D), N, S)$.
 \end{defi}

 \begin{ex}\label{ex:auu}
 Let $(A, [, ])$ be a 4-dimensional left Alia algebra with basis ${e_{1}, e_{2}, e_{3}, e_{4}}$ and the product $[, ]$ given by the following table
 \begin{center}
 \begin{tabular}{r|rrrr}
 $[,]$ & $e_{1}$  & $e_{2}$ & $e_{3}$ & $e_{4}$\\
 \hline
 $e_{1}$ & $0$  & $0$ & $0$ & $0$\\
 $e_{2}$ & $0$  & $0$ & $0$  & $0$\\
 $e_{3}$ & $e_{1}$  & $0$ & $0$ & $0$\\
 $e_{4}$ & $e_{3}$  & $0$ & $0$ & $0$\\
 \end{tabular}.
 \end{center}
 Define
 \begin{center}
 $\begin{cases}
 N(e_{1})=e_{1}\\
 N(e_{2})=e_{1}+e_{2}\\
 N(e_{3})=e_{3}\\
 N(e_{4})=e_{4}\\
 \end{cases}$.
 \end{center}
 Then $((A, [, ]), N)$ is a Nijenhuis left Alia algebra. Set
 \begin{center}
 $\begin{cases}
 \D_{r}(e_{1})=0\\
 \D_{r}(e_{2})=0\\
 \D_{r}(e_{3})=-e_{1}\o e_{2}\\
 \D_{r}(e_{4})=-e_{3}\o e_{2}\\
 \end{cases}$
 \end{center}
 and
 \begin{center}
 $\begin{cases}
 S(e_{1})=e_{1}\\
 S(e_{2})=e_{2}-e_{1}\\
 S(e_{3})=e_{3}\\
 S(e_{4})=e_{4}-e_{1}\\
 \end{cases}$
 \end{center}
 Then $((A,\D), S)$ is a Nijenhuis left Alia coalgebra. Furthermore, $((A, [, ], \D), N, S)$ is a Nijenhuis left Alia bialgebra.
 \end{ex} 

 \begin{thm}\label{thm:aw} Let $((A, [, ]), N)$ and $((A^{*}, \D^{*}), S^{*})$ be Nijenhuis left Alia algebras. Then there is a Nijenhuis left Alia bialgebra $((A, [, ], \D), N, S)$ if and only if $((A, N), (A^{*}, S^{*}), \mathcal{L_{A}^{*}}, \mathcal{L_{A}^{*}}-\mathcal{R_{A}^{*}}, \mathcal{L_{A^{*}}^{*}}, \mathcal{L_{A^{*}}^{*}}-\mathcal{R_{A^{*}}^{*}})$ is a matched pair of $((A, [, ]), N)$ and $((A^{*}, \D^{*}), S^{*})$.
 \end{thm}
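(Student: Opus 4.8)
The plan is to show that \emph{both} sides of the asserted equivalence unpack into the same list of conditions, namely items \ref{it:de:au1}--\ref{it:de:au5} of Definition \ref{de:au}, and then to match that list term by term against the three requirements in the definition of a matched pair of Nijenhuis left Alia algebras (Definition \ref{de:am}), specialized to $B=A^{*}$, $N_{B}=S^{*}$, $\ell_{A}=\mathcal{L_{A}^{*}}$, $r_{A}=\mathcal{L_{A}^{*}}-\mathcal{R_{A}^{*}}$, $\ell_{B}=\mathcal{L_{A^{*}}^{*}}$, $r_{B}=\mathcal{L_{A^{*}}^{*}}-\mathcal{R_{A^{*}}^{*}}$. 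Since $((A,[,]),N)$ and $((A^{*},\D^{*}),S^{*})$ are assumed to be Nijenhuis left Alia algebras, item \ref{it:de:au2} holds by hypothesis, and item \ref{it:de:au3} holds by the duality between Nijenhuis left Alia coalgebras and their dual Nijenhuis left Alia algebras recalled just before Definition \ref{de:au}. So a Nijenhuis left Alia bialgebra $((A,[,],\D),N,S)$ exists precisely when items \ref{it:de:au1}, \ref{it:de:au4}, \ref{it:de:au5} can be adjoined to the given data.

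Next I would record three translations. First, by \cite[Theorem 5]{KLWY}, item \ref{it:de:au1} --- that $(A,[,],\D)$ is a left Alia bialgebra --- is equivalent to $((A,[,]_{A}),(A^{*},[,]_{A^{*}}),\mathcal{L_{A}^{*}},\mathcal{L_{A}^{*}}-\mathcal{R_{A}^{*}},\mathcal{L_{A^{*}}^{*}},\mathcal{L_{A^{*}}^{*}}-\mathcal{R_{A^{*}}^{*}})$ being a matched pair of left Alia algebras. Second, by the last assertion of Lemma \ref{lem:o}, item \ref{it:de:au4} --- that $S$ is adjoint-\admt $((A,[,]),N)$, i.e.\ Eqs.\eqref{eq:15}--\eqref{eq:16} --- is equivalent to $((A^{*},\mathcal{L_{A}^{*}},\mathcal{L_{A}^{*}}-\mathcal{R_{A}^{*}}),S^{*})$ being a representation of $((A,[,]),N)$. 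Third, applying Lemma \ref{lem:o} to the Nijenhuis left Alia algebra $((A^{*},[,]_{A^{*}}),S^{*})$ together with the linear map $N$, and then transposing the outcome through the pairing identifying $[,]_{A^{*}}$ with $\D$ exactly as in the passage preceding Definition \ref{de:au}, item \ref{it:de:au5} --- that $N^{*}$ is adjoint-\admt $((A^{*},\D^{*}),S^{*})$, i.e.\ Eqs.\eqref{eq:laacaadmi1}--\eqref{eq:laacaadmi2} --- is equivalent to $((A,\mathcal{L_{A^{*}}^{*}},\mathcal{L_{A^{*}}^{*}}-\mathcal{R_{A^{*}}^{*}}),N)$ being a representation of $((A^{*},[,]_{A^{*}}),S^{*})$.

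With these three translations in hand the proof is bookkeeping against Definition \ref{de:am}. For $(\Longleftarrow)$: a matched pair of Nijenhuis left Alia algebras $((A,N),(A^{*},S^{*}),\mathcal{L_{A}^{*}},\mathcal{L_{A}^{*}}-\mathcal{R_{A}^{*}},\mathcal{L_{A^{*}}^{*}},\mathcal{L_{A^{*}}^{*}}-\mathcal{R_{A^{*}}^{*}})$ is by definition the data that $((A^{*},\mathcal{L_{A}^{*}},\mathcal{L_{A}^{*}}-\mathcal{R_{A}^{*}}),S^{*})$ is a representation of $((A,[,]),N)$, that $((A,\mathcal{L_{A^{*}}^{*}},\mathcal{L_{A^{*}}^{*}}-\mathcal{R_{A^{*}}^{*}}),N)$ is a representation of $((A^{*},[,]_{A^{*}}),S^{*})$, and that the underlying sextuple is a matched pair of left Alia algebras; by the three translations these yield items \ref{it:de:au4}, \ref{it:de:au5}, \ref{it:de:au1} respectively, and items \ref{it:de:au2}--\ref{it:de:au3} are the standing hypotheses, so $((A,[,],\D),N,S)$ is a Nijenhuis left Alia bialgebra. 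For $(\Longrightarrow)$: a Nijenhuis left Alia bialgebra supplies items \ref{it:de:au1}, \ref{it:de:au4}, \ref{it:de:au5}, which by the three translations are precisely the three defining requirements of Definition \ref{de:am}, so the indicated sextuple is a matched pair of Nijenhuis left Alia algebras. (Alternatively one may route the argument through Theorem \ref{thm:at}, since a Nijenhuis left Alia bialgebra and a Manin triple of Nijenhuis left Alia algebras carry the same information in view of Definition \ref{de:au} and the duality preceding it.)

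The only step that is not purely formal is the third translation: dualizing Lemma \ref{lem:o} amounts to checking that the Nijenhuis-compatibility of $N$ as an intertwiner of the coadjoint representation $(\mathcal{L_{A^{*}}^{*}},\mathcal{L_{A^{*}}^{*}}-\mathcal{R_{A^{*}}^{*}})$ of $(A^{*},[,]_{A^{*}})$ on $A\cong(A^{*})^{*}$ transposes, under the identification of $[,]_{A^{*}}$ with $\D$, into the Sweedler-notation identities \eqref{eq:laacaadmi1}--\eqref{eq:laacaadmi2}. This is where one must be careful with the (finite-dimensional) duality and with transposing $\mathcal{L}_{A^{*}}$ and $\mathcal{R}_{A^{*}}$, but the computation is strictly parallel to the one already indicated before Definition \ref{de:au}; everything else reduces to citing \cite[Theorem 5]{KLWY} and Lemma \ref{lem:o}.
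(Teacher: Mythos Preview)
Your proposal is correct and follows essentially the same route as the paper: both argue by unpacking Definition \ref{de:au} and Definition \ref{de:am} and matching the conditions via Lemma \ref{lem:o} and the bialgebra/matched-pair equivalence from \cite{KLWY}. One small point: the paper invokes \cite[Theorem 6]{KLWY} (not Theorem 5) for the equivalence between a left Alia bialgebra and the matched pair $((A,[,]_{A}),(A^{*},[,]_{A^{*}}),\mathcal{L_{A}^{*}},\mathcal{L_{A}^{*}}-\mathcal{R_{A}^{*}},\mathcal{L_{A^{*}}^{*}},\mathcal{L_{A^{*}}^{*}}-\mathcal{R_{A^{*}}^{*}})$; Theorem 5 there is the Manin-triple/matched-pair correspondence, so you should adjust that citation.
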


 \begin{proof} ($\Longrightarrow$) If $((A, [, ], \D), N, S)$ is a Nijenhuis left Alia bialgebra, then by Definition $\ref{de:au}$, we know that $(A, [, ], \D)$ is a left Alia bialgebra and $S$, $N^{*}$ are adjoint-\admt $((A, [, ]), N)$ and $((A^{*}, \D^{*}), S^{*})$ respectively. It means that $((A^{*}, \mathcal{L_{A}^{*}}, \mathcal{L_{A}^{*}}- \mathcal{R_{A}^{*}}), S^{*})$ is a representation of $((A, [, ]), N)$ and $((A, \mathcal{L_{A^{*}}^{*}}, \mathcal{L_{A^{*}}^{*}}-\mathcal{R_{A^{*}}^{*}}), N)$ is a representation of $((A^{*}, \D^{*}), S^{*})$. By  \cite[Theorem 6]{KLWY},
 $((A, [, ]), (A^{*}, \D^{*}), \mathcal{L_{A}^{*}}, \mathcal{L_{A}^{*}}-\mathcal{R_{A}^{*}}, \mathcal{L_{A^{*}}^{*}}, \mathcal{L_{A^{*}}^{*}}-\mathcal{R_{A^{*}}^{*}})$ is a matched pair of $(A, [, ])$ and $(A^{*}, \D^{*})$ since $(A, [, ], \D)$ is a left Alia bialgebra. Therefore,  $((A, N), (A^{*}, S^{*}), \mathcal{L_{A}^{*}}, \mathcal{L_{A}^{*}}-\mathcal{R_{A}^{*}}, \mathcal{L_{A^{*}}^{*}}, \mathcal{L_{A^{*}}^{*}}-\mathcal{R_{A^{*}}^{*}})$ is a matched pair of $((A, [, ]), N)$ and $((A^{*}, \D^{*}), S^{*})$.
	
 ($\Longleftarrow$) If $((A, N), (A^{*}, S^{*}), \mathcal{L_{A}^{*}}, \mathcal{L_{A}^{*}}-\mathcal{R_{A}^{*}}, \mathcal{L_{A^{*}}^{*}},  \mathcal{L_{A^{*}}^{*}}-\mathcal{R_{A^{*}}^{*}})$ is a matched pair of $((A, [, ]), N)$ and $((A^{*}, \D^{*}), S^{*})$, then by Definition $\ref{de:am}$, $((A, [, ]), (A^{*}, \D^{*}), \mathcal{L_{A}^{*}}, \mathcal{L_{A}^{*}}-\mathcal{R_{A}^{*}}, \mathcal{L_{A^{*}}^{*}},  \mathcal{L_{A^{*}}^{*}}-\mathcal{R_{A^{*}}^{*}})$ is a matched pair of $(A, [, ])$ and $(A^{*}, \D^{*})$. By \cite[Theorem 6]{KLWY},
 $(A, [, ], \D)$ is a left Alia bialgebra. By Definition $\ref{de:am}$, $S$, $N^{*}$ are adjoint-\admt $((A, [, ]), N)$ and $((A^{*}, \D^{*}), S^{*})$ respectively. Hence,
 $((A, [, ], \D), N, S)$ is a Nijenhuis left Alia bialgebra.
 \end{proof}

 \begin{thm}\label{thm:ax} Let $((A, [, ]), N)$ and $((A^{*}, [, ]_{A^{*}}(:=\D^*)), S^{*})$ be two Nijenhuis left Alia algebras. Then the following conditions are equivalent:
 \begin{enumerate}[(1)]
 \item\label{it:thm:ax1} $((A, N), (A^{*}, S^{*}), \mathcal{L_{A}^{*}}, \mathcal{L_{A}^{*}}-\mathcal{R_{A}^{*}}, \mathcal{L_{A^{*}}^{*}}, \mathcal{L_{A^{*}}^{*}}-\mathcal{R_{A^{*}}^{*}})$ is a matched pair of $((A, [, ]), N)$ and $((A^{*}, [, ]_{A^{*}}), S^{*})$.
 \item\label{it:thm:ax2} There is a Manin triple $((A\oplus A^{*}, [, ]_{A\oplus A^{*}}, N+S^{*}, \mathcal{B}), (A, N), (A^{*}, S^{*}))$ of $((A, [, ]), N)$ and $((A^{*}, [, ]_{A^{*}}), S^{*})$.	
 \item\label{it:thm:ax3} $((A, [, ], \D), N, S)$ is a Nijenhuis left Alia bialgebra.
 \end{enumerate}
 \end{thm}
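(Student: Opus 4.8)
The plan is to prove the equivalence of the three conditions by establishing the cycle $\ref{it:thm:ax1} \Leftrightarrow \ref{it:thm:ax2}$ and $\ref{it:thm:ax1} \Leftrightarrow \ref{it:thm:ax3}$, since both of these equivalences have essentially already been proven in the preceding results. Indeed, the equivalence $\ref{it:thm:ax1} \Leftrightarrow \ref{it:thm:ax2}$ is exactly the content of Theorem \ref{thm:at}, and the equivalence $\ref{it:thm:ax1} \Leftrightarrow \ref{it:thm:ax3}$ is exactly the content of Theorem \ref{thm:aw} (once one recalls that, in finite dimensions, $((A^{*}, [, ]_{A^{*}}), S^{*})$ being a Nijenhuis left Alia algebra is the same data as $((A, \D), S)$ being a Nijenhuis left Alia coalgebra, where $\D$ is the linear dual of $[,]_{A^{*}}$, as explained in the remark following Definition \ref{de:ah}). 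So the bulk of the proof is simply a matter of citing these two theorems and noting that the dualization identifies the hypotheses of Theorem \ref{thm:aw} with those stated here.

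First I would note the standing assumption that $A$ is finite-dimensional (this is needed so that $\D^{*}$ recovers $[,]_{A^{*}}$ and so that the adjoint-admissibility of $N^{*}$ to $((A^{*},\D^{*}),S^{*})$ in the sense of Eqs.\eqref{eq:laacaadmi1}--\eqref{eq:laacaadmi2} is equivalent to $N^{*}$ being adjoint-admissible to the Nijenhuis left Alia algebra $((A^{*}, [,]_{A^{*}}), S^{*})$ in the sense of Eqs.\eqref{eq:15}--\eqref{eq:16} applied in $A^{*}$). Then for $\ref{it:thm:ax1} \Leftrightarrow \ref{it:thm:ax2}$ I would observe that $\D^{*} = [,]_{A^{*}}$ makes the present hypotheses identical to those of Theorem \ref{thm:at}, and invoke that theorem directly. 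For $\ref{it:thm:ax1} \Leftrightarrow \ref{it:thm:ax3}$ I would likewise observe that the pair of Nijenhuis left Alia algebras $((A,[,]),N)$ and $((A^{*},\D^{*}),S^{*})$ here is the same as the pair $((A,[,]),N)$ and $((A^{*},\D^{*}),S^{*})$ in Theorem \ref{thm:aw}, and the matched-pair condition is verbatim the same in both statements, so Theorem \ref{thm:aw} gives the equivalence. Combining the two chains yields $\ref{it:thm:ax1} \Leftrightarrow \ref{it:thm:ax2} \Leftrightarrow \ref{it:thm:ax3}$, hence all three are equivalent.

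There is essentially no serious obstacle here: the theorem is a synthesis statement collecting Theorem \ref{thm:at}, Theorem \ref{thm:aw}, Lemma \ref{lem:ar}, and Proposition \ref{pro:ao} into a single clean equivalence, and all the analytic work (the matched-pair compatibility identities, the Nijenhuis-operator identities, the dualization of representations on $A^{*}$, and the bookkeeping with the natural bilinear form $\mathcal{B}$ of Eq.\eqref{eq:naturalb}) has already been carried out in those earlier results. The only point requiring a sentence of care is making explicit the identification, in finite dimensions, between the coalgebra language of Definition \ref{de:au}\ref{it:de:au3} and \ref{it:de:au5} and the algebra language on $A^{*}$ used in Definitions \ref{de:aq}--\ref{de:am}; this is precisely what the paragraph after Definition \ref{de:ah} records, so I would cite that. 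Thus the proof reduces to: ``Combine Theorem \ref{thm:at} (for $\ref{it:thm:ax1} \Leftrightarrow \ref{it:thm:ax2}$) with Theorem \ref{thm:aw} (for $\ref{it:thm:ax1} \Leftrightarrow \ref{it:thm:ax3}$), using that $\D$ is the linear dual of $[,]_{A^{*}}$.''
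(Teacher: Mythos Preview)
Your proposal is correct and follows exactly the paper's approach: the paper's proof is the single line ``It follows by Theorem \ref{thm:at} and Theorem \ref{thm:aw},'' which is precisely your plan of combining $\ref{it:thm:ax1}\Leftrightarrow\ref{it:thm:ax2}$ via Theorem \ref{thm:at} and $\ref{it:thm:ax1}\Leftrightarrow\ref{it:thm:ax3}$ via Theorem \ref{thm:aw}. Your additional remarks on finite-dimensionality and the coalgebra/algebra dualization are accurate elaborations but go beyond what the paper spells out.
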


 \begin{proof} It follows by Theorem \ref{thm:at} and Theorem \ref{thm:aw}.
 \end{proof}

 \section{$S$-admissible left Alia Yang-Baxter equations}\label{se:ybe} 

 In this section, we mainly give some constructions of Nijenhuis left Alia bialgebras via an element $r$ in $A\o A$.

\subsection{Triangular Nijenhuis left Alia algebras}
 \begin{defi}\label{de:u}\cite{KLY}~Let $(A, [, ])$ be a left Alia algebra and $r=\sum_{i} a_{i}\o b_{i}\in A\o A$. We call
 \begin{eqnarray}
 Al(r)=\sum_{i, j}\Big([a_{i}, a_{j}]\o b_{i}\o b_{j}+a_{i}\o ([a_{j}, b_{i}]-[b_{i}, a_{j}])\o b_{j}-a_{i}\o a_{j}\o [b_{j}, b_{i}]\Big)=0.\label{eq:ybq}
 \end{eqnarray}
 a {\bf left Alia Yang-Baxter equation in $(A, [, ])$}. And in this case we say $r$ is a {\bf solution of the left Alia Yang-Baxter equation in $(A, [, ])$}.
 \end{defi}

 \begin{defi}\label{de:v}\cite{KLY} Let $(A, [, ])$ be a left Alia algebra and $r=\sum_{i} a_{i}\o b_{i}\in A\o A$. Suppose that $r$ is an antisymmetric solution of the left Alia Yang-Baxter equation. Then $(A, [, ], \D_{r})$ is a left Alia bialgebra, where $\D_{r}: A\lr A\o A$ is given by
 \begin{eqnarray}
 \D_{r}(x)=\sum_{i}\big(([a_{i}, x]-[x, a_{i}])\o b_{i}-a_{i}\o [b_{i}, x]\big).\label{eq:comuti}
 \end{eqnarray}
 We say the resulting left Alia bialgebra $(A, [, ], \D_{r})$ is {\bf triangular}.
 \end{defi}

 Similar to Hopf algebra theory, triangular left Alia bialgebra can be characterized as follows.
 \begin{pro}\label{pro:w} Let $(A, [, ])$ be a left Alia algebra and $r=\sum_{i} a_{i}\o b_{i}\in A\o A$. Then $r$ is a solution of left Alia Yang-Baxter equation in $(A, [, ])$ if and only if
 \begin{eqnarray}
 (\id\o \D_{r})(r)=\sum_{i, j} -[a_{i}, a_{j}]\o b_{i}\o b_{j},\label{eq:dra}
 \end{eqnarray}
 where $\D_{r}$ is defined by Eq.\eqref{eq:comuti}.

 If further, $r$ is antisymmetric (in the sence of $r+\tau(r)=0$), then $r$ is a solution of left Alia Yang-Baxter equation in $(A, [, ])$ if and only if
 \begin{eqnarray}\label{eq:dr}
 (\D_{r}\o \id)(r)=\sum_{i, j} a_{i}\o a_{j}\o [b_{i}, b_{j}],
 \end{eqnarray}
 holds.
 \end{pro}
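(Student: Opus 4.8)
The plan is to prove Proposition \ref{pro:w} by a direct expansion, exploiting the structure of $\D_r$ as a linear combination of operators built from $\mathcal L$ and $\mathcal R$. First I would write $\D_r(x) = \sum_i\big((\mathcal L(a_i) - \mathcal R(a_i))(x)\o b_i - a_i\o \mathcal L(b_i)(x)\big)$ using the notation from Example \ref{ex:d}, so that $\D_r = (\mathcal L\o\id)\sigma_1 - (\mathcal R\o\id)\sigma_1 - (\id\o\mathcal L)\sigma_2$ applied to $r$ in the appropriate slots, where $\sigma_1,\sigma_2$ record which tensor factor of $r$ feeds $x$. Then to compute $(\id\o\D_r)(r) = \sum_{i,j} a_i\o \D_r(b_i)$ I substitute $x = b_i$ into the formula for $\D_r$, acting with $a_j, b_j$ in the roles of the dummy indices; this yields exactly the three sums
\begin{eqnarray*}
(\id\o\D_r)(r) = \sum_{i,j}\Big(a_i\o([a_j,b_i]-[b_i,a_j])\o b_j - a_i\o a_j\o[b_j,b_i]\Big).
\end{eqnarray*}
Comparing this with $Al(r)$ in Eq.\eqref{eq:ybq}, we see that $(\id\o\D_r)(r) = Al(r) - \sum_{i,j}[a_i,a_j]\o b_i\o b_j$. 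Hence $Al(r)=0$ holds if and only if $(\id\o\D_r)(r) = -\sum_{i,j}[a_i,a_j]\o b_i\o b_j$, which is Eq.\eqref{eq:dra}. This is just a bookkeeping identity and requires no left Alia axioms at all, so it should be essentially immediate.

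For the second equivalence I would similarly compute $(\D_r\o\id)(r) = \sum_{i,j}\D_r(a_i)\o b_i$. Substituting $x = a_i$ into the formula for $\D_r$ (with dummy index $j$) gives
\begin{eqnarray*}
(\D_r\o\id)(r) = \sum_{i,j}\Big(([a_j,a_i]-[a_i,a_j])\o b_j\o b_i - a_j\o[b_j,a_i]\o b_i\Big).
\end{eqnarray*}
Now I use the antisymmetry $r + \tau(r) = 0$, i.e. $\sum_i a_i\o b_i = -\sum_i b_i\o a_i$, to rewrite each sum. Relabelling and applying the flip to the outer two factors, $\sum_{i,j}[a_j,a_i]\o b_j\o b_i$ becomes $\sum_{i,j} a_i\o a_j\o[b_i,b_j]$ after two uses of antisymmetry (once in each of the two distinct tensor factors feeding $a$'s), and likewise for the other two sums; each application introduces a sign. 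The goal is to show that after these substitutions $(\D_r\o\id)(r) = \sum_{i,j} a_i\o a_j\o[b_i,b_j]$ is equivalent to the expression $-\sum_{i,j}[a_i,a_j]\o b_i\o b_j = (\id\o\D_r)(r)$ obtained above, transported across the total flip $\tau_{13}$ on $A^{\o 3}$, which is a symmetry of the condition $Al(r)=0$ when $r$ is antisymmetric.

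The main obstacle — really the only one requiring care — is keeping track of the signs and the index/flip relabellings in the antisymmetric case, since each of the three constituent sums of $Al(r)$ transforms differently under $\tau(r) = -r$: some factors pick up one sign, others two. The cleanest way to organize this is to observe that $Al(r)$, viewed in $A^{\o 3}$, satisfies $\tau_{13}(Al(r)) = -Al(r)$ whenever $r$ is antisymmetric (this follows by inspecting Eq.\eqref{eq:ybq} term by term and using $r = -\tau(r)$ in the first and third tensor slots simultaneously), so that $Al(r)=0 \iff \tau_{13}(Al(r))=0$; combining this with the first equivalence $Al(r) = 0 \iff (\id\o\D_r)(r) = -\sum_{i,j}[a_i,a_j]\o b_i\o b_j$ and then applying $\tau_{13}$ to both sides yields Eq.\eqref{eq:dr} directly, once one checks that $\tau_{13}$ of the right-hand side $-\sum[a_i,a_j]\o b_i\o b_j$ equals $\sum a_i\o a_j\o[b_i,b_j]$ after the antisymmetry relabelling. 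I would present the argument in this order: (i) set up the operator form of $\D_r$; (ii) compute $(\id\o\D_r)(r)$ and read off the first equivalence; (iii) establish the $\tau_{13}$-antisymmetry of $Al(r)$ under $r+\tau(r)=0$; (iv) conclude the second equivalence by transporting the first one across $\tau_{13}$.
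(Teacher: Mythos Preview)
Your treatment of the first equivalence is correct and identical to the paper's: both simply expand $(\id\o\D_r)(r)$ term by term and observe that it equals $Al(r)$ minus the first summand $\sum_{i,j}[a_i,a_j]\o b_i\o b_j$.

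For the second equivalence your direct computation of $(\D_r\o\id)(r)$ is right, and the paper proceeds in essentially the way you first describe: apply antisymmetry in the $i$-index to rewrite the result as $-\xi\big((\id\o\D_r)(r)\big)$, where $\xi(x\o y\o z)=y\o z\o x$ is the cyclic permutation, and then read off the equivalence from part one. Your alternative ``cleanest'' organization via $\tau_{13}$ has a gap, however. The identity $\tau_{13}(Al(r))=-Al(r)$ is true when $r$ is antisymmetric, but applying $\tau_{13}$ to both sides of Eq.\eqref{eq:dra} yields a statement about $\tau_{13}\big((\id\o\D_r)(r)\big)$, and you never establish that this equals $(\D_r\o\id)(r)$; in fact it does not, without further antisymmetry manipulation that amounts to redoing the direct computation. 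Moreover your specific claim that $\tau_{13}\big(-\sum[a_i,a_j]\o b_i\o b_j\big)=\sum a_i\o a_j\o[b_i,b_j]$ is off: two applications of antisymmetry give $-\sum a_i\o a_j\o[b_j,b_i]$ instead, and since $[,]$ is not skew-symmetric in a left Alia algebra this matters. The fix is simply to drop the $\tau_{13}$ detour and stick with your direct antisymmetry rewriting, organized (as in the paper) via the cyclic shift $\xi$ rather than the flip $\tau_{13}$.
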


 \begin{proof} 	By Eq.\eqref{eq:comuti}, we have
 \begin{eqnarray*}
 (\id\o \D_{r})(r)
 =\sum_{i, j}\big(a_{i}\o ([a_{j}, b_{i}]-[b_{i}, a_{j}])\o b_{j}-a_{i}\o a_{j}\o [b_{j}, b_{i}]\big).
 \end{eqnarray*}
 Then Eq.\eqref{eq:dra} $\Leftrightarrow$ Eq.\eqref{eq:ybq}.

 If $r$ is an antisymmetric element, we have
 \begin{eqnarray*}
 (\D_{r}\o \id)(r)
 &=&\sum_{i, j}\big(([a_{j}, a_{i}]-[a_{i}, a_{j}])\o b_{j}\o b_{i}- a_{j}\o [b_{j}, a_{i}]\o b_{i}\big)\\
 &=&\sum_{i, j}\big(-([a_{j}, b_{i}]-[b_{i}, a_{j}])\o b_{j}\o a_{i}+ a_{j}\o [b_{j}, b_{i}]\o a_{i}\big)\\
 &=&\sum_{i, j}-\xi(a_{i}\o ([a_{j}, b_{i}]-[b_{i}, a_{j}])\o b_{j}-a_{i}\o a_{j}\o [b_{j}, b_{i}]),
 \end{eqnarray*}
 where $\xi(x\o y\o z)=y\o z\o x$ for all $x, y, z\in A$. Then Eq.\eqref{eq:dr} $\Leftrightarrow$ Eq.\eqref{eq:ybq}.
 \end{proof}
 
 Let $(A, [, ])$ be a left Alia algebra and $r \in A\o A$. Set a map $$
 h(x)=(\mathcal{R}-\mathcal{L})(x)\o \id-\id\o \mathcal{R}(x).
 $$ 
 Then the map $\D: A\lr A\o A$ is given by Eq.\eqref{eq:comuti} induces a left Alia algebra structure on $A^{*}$ such that $(A, [, ], \D)$ is a left Alia bialgebra \cite{KLY} if and only if, for all $x\in A$,
 \begin{eqnarray}
 &&\hspace{-6mm}(\id+\xi+\xi^{2})\bigg((\id\o \id \o \mathcal{R}(x))(\id\o \tau)Al(r)+\sum_{i}\Big(h([a_{i}, x]-[x, a_{i}])(r+\tau(r))\o b_{i}\nonumber\\
 &&+\tau h(a_{i})(r+\tau(r))\o b_{i}+b_{i}\o (\mathcal{R}(a_{i})\o \id)h(x)(r+\tau(r))+(\id\o \mathcal{R}(b_{i})\o \mathcal{R}(x))\label{eq:laacob1}\\
 &&(a_{i}\o (r+\tau(r)))+(\id\o \mathcal{R}(a_{i})\o \mathcal{R}(x))(b_{i}\o (r+\tau(r)))\Big)\bigg)=0,\nonumber
 \end{eqnarray}
 and
 \begin{eqnarray}
 &&\tau h([x, y]-[y, x])(r+\tau(r))-(\id\o \mathcal{R}(y))\tau h(x)(r+\tau(r))\label{eq:laacob2}\\
 &&\hspace{46mm}+(\id\o \mathcal{R}(x))\tau h(y)(r+\tau(r))=0.\nonumber
 \end{eqnarray} 

 \begin{pro}\label{pro:ba} Let $((A, [, ]), N)$ be an $S$-adjoint-admissible Nijenhuis left Alia algebra and $r\in A\o A$. Define a linear map $\D: A\lr A\o A$  by Eq.\eqref{eq:comuti}. Then the following assertions hold.
 \begin{enumerate}[(1)]
   \item\label{it:pro:ba1} Eq.\eqref{eq:nlaca} holds if and only if, for all $x\in A$,
 \begin{eqnarray}
 &&(\mathcal{R}(S(x))-\mathcal{L}(S(x))\o \id-S\circ(\mathcal{R}(x)-\mathcal{L}(x))\o \id)(N\o \id -\id \o S)(r)\nonumber\\
 &&\hspace{30mm}+(\id\o \mathcal{R}(S(x))-\id\o S\circ \mathcal{R}(x))(S\o \id-\id\o N)(r)=0.\label{eq:41}
 \end{eqnarray}
   \item \label{it:pro:ba2} Eqs.\eqref{eq:laacaadmi1}-\eqref{eq:laacaadmi2} hold if and only if, for all $x\in A$,
 \begin{eqnarray}
 &&\big(\id\o (N\circ \mathcal{R}(x)-\mathcal{R}(N(x)))+(\mathcal{R}(N(x))-\mathcal{L}(N(x)))\o \id\nonumber\\
 &&\hspace{20mm}+S\circ(\mathcal{R}(x)-\mathcal{L}(x))\o \id\big)(S\o \id-\id\o N)(r)\label{eq:42}\\
 &&\hspace{30mm}+\big((\mathcal{L}(x)-\mathcal{R}(x))\circ S^{2}\o \id +(\mathcal{R}(x)-\mathcal{L}(x))\o N^{2}\big)(r)=0,\nonumber\\
 &&\big(N\circ (\mathcal{R}(x)-\mathcal{L}(x))\o \id- (\mathcal{R}(N(x))-\mathcal{L}(N(x))\o \id \nonumber\\
 &&\hspace{20mm}+\id\o (\mathcal{R}(N(x))+S\circ \mathcal{R}(x)))\big)(N\o \id-\id\o S)(r)\label{eq:43}\\
 &&\hspace{60mm}+\big(\id\o\mathcal{R}(x)\circ S^{2}-N^{2}\o \mathcal{R}(x)\big)(r)=0,\nonumber
 \end{eqnarray}
 \end{enumerate}
 \end{pro}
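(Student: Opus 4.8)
The plan is to translate each of the two displayed conditions \emph{adjoint-admissibility of $S$}-type equations---namely Eq.\eqref{eq:nlaca} for $((A,\D),S)$ in part \ref{it:pro:ba1}, and Eqs.\eqref{eq:laacaadmi1}--\eqref{eq:laacaadmi2} in part \ref{it:pro:ba2}---directly through the explicit formula \eqref{eq:comuti} for $\D=\D_r$. Write $r=\sum_i a_i\o b_i$ and abbreviate $\D_r(x)=\sum_i\big((\mathcal{R}(a_i)-\mathcal{L}(a_i))(x)\o b_i - a_i\o \mathcal{R}(b_i)(x)\big)$ using \eqref{eq:4}, so that $\D_r(x)=h(-)\cdot$-type expressions are available via the map $h(x)=(\mathcal{R}-\mathcal{L})(x)\o\id-\id\o\mathcal{R}(x)$ introduced just before the statement; in fact $\D_r(x)$ is obtained by applying a version of $h$ with the roles of $r$'s legs and $x$ interchanged. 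The key computational input is then: substitute $\D_r$ for $\D$ everywhere in \eqref{eq:nlaca}, expand $S(x_{(1)})\o S(x_{(2)})$, $S^2(x)_{(1)}\o S^2(x)_{(2)}$, etc., using that $S$ acts on the first tensor leg through $\mathcal{R}(a_i)-\mathcal{L}(a_i)$ composed appropriately and on the second leg through $\mathcal{R}(b_i)$, and collect terms.

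First I would handle part \ref{it:pro:ba1}. Expanding the left-hand side of \eqref{eq:nlaca} with $\D=\D_r$ produces, for each $i$, a sum of tensors whose first leg involves $S$ and $\mathcal{R}(a_i)-\mathcal{L}(a_i)$ applied to $x$ (or to $S(x)$) and whose second leg involves $\mathcal{R}(b_i)$; the right-hand side produces the analogous terms with $S$ distributed one leg at a time. Grouping the four resulting tensor expressions and using the operator identity $S\circ(\mathcal{R}(a_i)-\mathcal{L}(a_i)) = \cdots$ coming from the hypothesis that $S$ is adjoint-admissible to $((A,[,]),N)$ (Eqs.\eqref{eq:15}--\eqref{eq:16}, equivalently the representation property \eqref{eq:13}--\eqref{eq:14} for $(A,\mathcal{L},\mathcal{R})$), the net obstruction collapses to precisely the single tensor equation \eqref{eq:41}. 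The role of the $S$-adjoint-admissibility hypothesis is exactly to kill the "diagonal" cross-terms that would otherwise appear, leaving only the mixed expression $(N\o\id-\id\o S)(r)$ and $(S\o\id-\id\o N)(r)$ acted on by $\mathcal{R}(S(x))-\mathcal{L}(S(x))$ and $S\circ\mathcal{R}(x)$ as written.

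For part \ref{it:pro:ba2} the strategy is identical but applied to the pair of equations \eqref{eq:laacaadmi1}--\eqref{eq:laacaadmi2}, which express that $N^*$ is adjoint-admissible to $((A^*,\D_r^*),S^*)$. Substituting $\D=\D_r$, each side expands into tensors with first leg carrying $\mathcal{R}(a_i)-\mathcal{L}(a_i)$ (possibly precomposed with $N$, $N^2$, $S$, or $S^2$) applied to $x$ or $N(x)$, and second leg carrying $\mathcal{R}(b_i)$ (possibly precomposed with $N$, $N^2$, $S$). Using again the adjoint-admissibility of $S$ to eliminate the purely-diagonal contributions, Eq.\eqref{eq:laacaadmi1} reduces to \eqref{eq:42} and Eq.\eqref{eq:laacaadmi2} reduces to \eqref{eq:43}; the appearance of $N\circ\mathcal{R}(x)-\mathcal{R}(N(x))$ and $(\mathcal{L}(x)-\mathcal{R}(x))\circ S^2$ etc.\ is simply the bookkeeping of which operator lands on which leg after the cancellation. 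The main obstacle is purely organizational: keeping track of the six-or-more tensor terms per equation and correctly invoking \eqref{eq:15}--\eqref{eq:16} (in the operator form $S\circ(\mathcal R(N x)-\mathcal L(Nx))$ versus $(\mathcal R(S x)-\mathcal L(Sx))\circ S$, and their $N\leftrightarrow S$ transposes) to cancel the unwanted pieces; once the cancellation pattern is identified, each direction of the ``if and only if'' is immediate since every step is a reversible rewriting. I expect no conceptual difficulty beyond this careful term-matching.
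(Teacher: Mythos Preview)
Your plan is correct and matches the paper's approach: substitute $\D_r$ from \eqref{eq:comuti} into each side of \eqref{eq:nlaca} (resp.\ \eqref{eq:laacaadmi1}--\eqref{eq:laacaadmi2}), expand, and use the $S$-adjoint-admissibility relations \eqref{eq:15}--\eqref{eq:16} to rewrite the terms containing $S^2(x)$ so that everything collapses to \eqref{eq:41} (resp.\ \eqref{eq:42}--\eqref{eq:43}). One point you glossed over: in part \ref{it:pro:ba2} the cancellation also requires the Nijenhuis identity \eqref{eq:7} for $N$ (to handle the terms $N^2([b_i,x])$, $N([a_i,N(x)])$, etc.\ arising from $\D_r(N(x))$), not only the admissibility of $S$; the paper invokes \eqref{eq:7} alongside \eqref{eq:15}--\eqref{eq:16} at that step.
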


 \begin{proof}\ref{it:pro:ba1} For all $x\in A$, we compute {\small
 \begin{eqnarray*}
 0
 \hspace{-6mm}&=&\hspace{-6mm}S(x_{(1)})\o S(x_{(2)})+S^{2}(x)_{(1)}\o S^{2}(x)_{(2)}-S(S(x)_{(1)})\o S(x)_{(2)}-S(x)_{(1)}\o S(S(x)_{(2)})\\
 &\stackrel{\eqref{eq:comuti}}{=}&\hspace{-6mm}\sum_i \Big(S([a_{i}, x])\o S(b_{i})-S([x, a_{i}])\o S(b_{i})-S(a_{i})\o S([b_{i}, x])\underline{+[a_{i}, S^{2}(x)]\o b_{i}}\\
 &&\hspace{-6mm}\underline{-[S^{2}(x), a_{i}]\o b_{i}-a_{i}\o [b_{i}, S^{2}(x)]-S([a_{i}, S(x)])\o b_{i}+S([S(x), a_{i}])\o b_{i}}\\
 &&\hspace{-6mm}+S(a_{i})\o [b_{i}, S(x)]-[a_{i}, S(x)]\o S(b_{i})+[S(x), a_{i}]\o S(b_{i})\underline{+a_{i}\o S([b_{i}, S(x)])}\Big)\\
 &\stackrel{\eqref{eq:15}\eqref{eq:16}}{=}&\hspace{-4mm}\sum_i\Big(S([a_{i}, x])\o S(b_{i})-S([x, a_{i}])\o S(b_{i})-S(a_{i})\o S([b_{i}, x])-\underline{S([N(a_{i}), x])\o b_{i}}\\
 &&\hspace{-6mm}\underline{+S([x, N(a_{i})])\o b_{i}+a_{i}\o S([N(b_{i}), x])+[N(a_{i}), S(x)]\o b_{i}-[S(x), N(a_{i})]\o b_{i}}\\
 &&\hspace{-6mm}+S(a_{i})\o [b_{i}, S(x)]-[a_{i}, S(x)]\o S(b_{i})+[S(x), a_{i}]\o S(b_{i})-\underline{a_{i}\o [N(b_{i}), S(x)]}\Big)\\ 
 &=&\hspace{-6mm}(\mathcal{R}(S(x))-\mathcal{L}(S(x))\o \id-S\circ(\mathcal{R}(x)-\mathcal{L}(x))\o \id) (N\o \id -\id \o S)(r)\\
 &&\hspace{-6mm}+(\id\o \mathcal{R}(S(x))-\id\o S\circ \mathcal{R}(x))(S\o \id-\id\o N)(r).
 \end{eqnarray*}
 }
 Then, Eq.\eqref{eq:nlaca} holds if and only if Eq.\eqref{eq:41} holds.\smallskip

 \ref{it:pro:ba2} Similarly, we have {\small
 \begin{eqnarray*}
 0
 \hspace{-6mm}&=&\hspace{-6mm}S(N(x)_{(1)})\o N(x)_{(1)}+x_{(1)}\o N^{2}(x_{(2)})-S(x_{(1)})\o N(x_{(2)})-N(x)_{(1)}\o N(N(x)_{(2)})\\
 &\stackrel{\eqref{eq:comuti}}{=}&\hspace{-6mm}\sum_i \Big(\underbrace{S([a_{i}, N(x)])\o b_{i}-S[N(x), a_{i}]\o b_{i}}-S(a_{i})\o [b_{i}, N(x)]+[a_{i}, x]\o N^{2}(b_{i})\\
 &&\hspace{-6mm}-[x, a_{i}]\o N^{2}(b_{i})-\underline{a_{i}\o N^{2}([b_{i}, x])}-S([a_{i}, x])\o N(b_{i})-S([x, a_{i}])\o N(b_{i})\\
 &&\hspace{-6mm}+S(a_{i})\o N([b_{i}, x])-[a_{i}, N(x)]\o N(b_{i})+[N(x), a_{i}]\o N(b_{i})+\underline{a_{i}\o N([b_{i}, N(x)])}\Big)\\
 &\stackrel{\eqref{eq:7}\eqref{eq:15}\eqref{eq:16}}{=}&\hspace{-2mm}\sum_i \Big(\underbrace{S([S(a_{i}), x])\o b_{i}-[S^{2}(a_{i}), x]\o b_{i}+[S(a_{i}), N(x)]\o b_{i}-S([x, S(a_{i})])\o b_{i}}\\
 &&\hspace{-6mm}\underbrace{+[x, S^{2}(a_{i})]\o b_{i}-[N(x), S(a_{i})]\o b_{i}}-S(a_{i})\o [b_{i}, N(x)]+[a_{i}, x]\o N^{2}(b_{i})\\
 &&\hspace{-6mm}-[x, a_{i}]\o N^{2}(b_{i})+\underline{a_{i}\o [N(b_{i}), N(x)]}-S([a_{i}, x])\o N(b_{i})-S([x, a_{i}])\o N(b_{i})\\
 &&\hspace{-6mm}+S(a_{i})\o N([b_{i}, x])-[a_{i}, N(x)]\o N(b_{i})+[N(x), a_{i}]\o N(b_{i})\underline{-a_{i}\o N([N(b_{i}), x])}\Big)\\ 
 &=&\hspace{-6mm}(\id\o (N\circ \mathcal{R}(x)-\mathcal{R}(N(x)))+(\mathcal{R}(N(x))-\mathcal{L}(N(x)))\o \id+S\circ(\mathcal{R}(x)-\mathcal{L}(x))\o \id)\\
 &&\hspace{-6mm}(S\o \id-\id\o N)(r)+((\mathcal{L}(x)-\mathcal{R}(x))\circ S^{2}\o \id +(\mathcal{R}(x)-\mathcal{L}(x))\o N^{2})(r),
 \end{eqnarray*}
 and
 \begin{eqnarray*}
 0
 \hspace{-6mm}&=&\hspace{-6mm}N(x)_{(1)}\o S(N(x)_{(2)})+N^{2}(x_{(1)})\o x_{(2)}-N(x_{(1)})\o S(x_{(2)}) -N(N(x)_{(1)})\o N(x)_{(2)}\\
 &\stackrel{\eqref{eq:comuti}}{=}&\hspace{-5mm}\sum_i \Big([a_{i}, N(x)]\o S(b_{i})- [N(x), a_{i}]\o S(b_{i})\underbrace{-a_{i}\o S([b_{i}, N(x)])}+ \underline{N^{2}([a_{i}, x])\o b_{i}}\\
 &&\hspace{-6mm}\underline{- N^{2}([x, a_{i}])\o b_{i}}- N^{2}(a_{i})\o [b_{i}, x]\underline{-N([a_{i}, N(x)])\o b_{i}+N([N(x), a_{i}])\o b_{i}}\\
 &&\hspace{-6mm}+ N(a_{i})\o [b_{i}, N(x)]- N([a_{i}, x])\o S(b_{i})+N([x, a_{i}])\o S(b_{i})+N(a_{i})\o S([b_{i}, x])\Big)\\
 &\stackrel{\eqref{eq:7}\eqref{eq:16}}{=}&\hspace{-4mm}\sum_i \Big([a_{i}, N(x)]\o S(b_{i})- [N(x), a_{i}]\o S(b_{i})\underbrace{-a_{i}\o S([S(b_{i}), x])+a_{i}\o [S^{2}(b_{i}), x]}\\
 &&\hspace{-6mm}\underbrace{- a_{i}\o [S(b_{i}), N(x)]}\underline{- [N(a_{i}), N(x)]\o b_{i}+ [N(x), N(a_{i})]\o b_{i}}-N^{2}(a_{i})\o [b_{i}, x]\\
 &&\hspace{-6mm}\underline{+ N([N(a_{i}), x])\o b_{i}- N([x, N(a_{i})])\o b_{i}}+ N(a_{i})\o [b_{i}, N(x)]-N([a_{i}, x])\o S(b_{i})\\
 &&\hspace{-6mm}+ N([x, a_{i}])\o S(b_{i})+ N(a_{i})\o S([b_{i}, x])\Big)\\ 
 &=&\hspace{-6mm}\big(N\circ (\mathcal{R}(x)-\mathcal{L}(x))\o \id- (\mathcal{R}(N(x))-\mathcal{L}(N(x)))\o \id+\id\o (\mathcal{R}(N(x))+S\circ \mathcal{R}(x))\big)\\
 &&\hspace{-6mm}(N\o \id-\id\o S)(r)+\big(\id\o\mathcal{R}(x)\circ S^{2}-N^{2}\o \mathcal{R}(x)\big)(r).
 \end{eqnarray*}
 }
 Then, Eq.\eqref{eq:laacaadmi1} holds if and only if Eq.\eqref{eq:42} holds,   Eq.\eqref{eq:laacaadmi2} holds if and only if Eq.\eqref{eq:43} holds.

 \end{proof}

 \begin{rmk}\label{eq:bc}
 \begin{enumerate}[(1)] 		
 \item\label{it:eq:bc1}	If $(N\o \id -\id \o S)(r)=0$, then $(\id\o\mathcal{R}(x)\circ S^{2}-N^{2}\o \mathcal{R}(x))(r)=0$.
 \item\label{it:eq:bc2}	If $(S\o \id-\id\o N)(r)=0$, then $((\mathcal{L}(x)-\mathcal{R}(x))\circ S^{2}\o \id +(\mathcal{R}(x)-\mathcal{L}(x))\o N^{2})(r)=0$.
 \item\label{it:eq:bc3}	If $r$ is antisymmetric, then $(N\o \id -\id \o S)(r)=0$ if and only if $(S\o \id-\id\o N)(r)=0$.
 \end{enumerate} 	
 \end{rmk}

 \begin{thm}\label{thm:bd} Let $((A, [, ]), N)$ be an $S$-adjoint-admissible Nijenhuis left Alia algebra and $r\in A\o A$. Define a linear map $\D$ by Eq.\eqref{eq:comuti}. Then $((A, [, ], \D), N, S)$ is a Nijenhuis left Alia bialgebra if and only if Eqs.\eqref{eq:laacob1}-\eqref{eq:43} hold.
 \end{thm}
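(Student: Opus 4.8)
The plan is to assemble the characterization by collecting, one at a time, the five conditions in Definition \ref{de:au} and translating each into a statement about $r$ using the comultiplication $\D=\D_r$ from Eq.\eqref{eq:comuti}. Since $((A,[,]),N)$ is assumed to be an $S$-adjoint-admissible Nijenhuis left Alia algebra, conditions \ref{it:de:au2} and \ref{it:de:au4} of Definition \ref{de:au} are automatic, so only three items remain to be verified: that $(A,[,],\D_r)$ is a left Alia bialgebra, that $((A,\D_r),S)$ is a Nijenhuis left Alia coalgebra, and that $N^*$ is adjoint-admissible to $((A^*,\D_r^*),S^*)$.

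First I would handle the left Alia bialgebra condition. By the discussion preceding Proposition \ref{pro:ba} — recalling the map $h(x)=(\mathcal{R}-\mathcal{L})(x)\o\id-\id\o\mathcal{R}(x)$ and the criterion of \cite{KLY} — the map $\D_r$ equips $A^*$ with a left Alia algebra structure making $(A,[,],\D_r)$ a left Alia bialgebra precisely when Eqs.\eqref{eq:laacob1}--\eqref{eq:laacob2} hold. This is exactly item \ref{it:de:au1}, so nothing new is needed here beyond invoking that result. Next, for the Nijenhuis left Alia coalgebra condition \ref{it:de:au3}, I would apply Proposition \ref{pro:ba}\ref{it:pro:ba1}: under the standing $S$-adjoint-admissibility hypothesis, Eq.\eqref{eq:nlaca} (the defining identity of a Nijenhuis left Alia coalgebra) is equivalent to Eq.\eqref{eq:41}. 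Finally, for condition \ref{it:de:au5}, namely that $N^*$ be adjoint-admissible to $((A^*,\D_r^*),S^*)$, I would apply Proposition \ref{pro:ba}\ref{it:pro:ba2}: Eqs.\eqref{eq:laacaadmi1}--\eqref{eq:laacaadmi2} are equivalent to Eqs.\eqref{eq:42}--\eqref{eq:43}.

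Putting these together: $((A,[,],\D_r),N,S)$ is a Nijenhuis left Alia bialgebra if and only if items \ref{it:de:au1}, \ref{it:de:au3}, \ref{it:de:au5} of Definition \ref{de:au} hold (the other two being free by hypothesis), which by the three equivalences above is if and only if Eqs.\eqref{eq:laacob1}--\eqref{eq:laacob2}, Eq.\eqref{eq:41}, and Eqs.\eqref{eq:42}--\eqref{eq:43} all hold; that is, if and only if Eqs.\eqref{eq:laacob1}--\eqref{eq:43} hold. The proof is therefore essentially a bookkeeping argument chaining Proposition \ref{pro:ba} with the cited bialgebra criterion, and the only real work has already been done in establishing Proposition \ref{pro:ba}. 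The one point that deserves care — and is the closest thing to an obstacle — is checking that the $S$-adjoint-admissibility assumption on $((A,[,]),N)$ is precisely what licenses using Proposition \ref{pro:ba} (whose hypotheses demand exactly this) and simultaneously makes item \ref{it:de:au4} of Definition \ref{de:au} vacuous, so that the list of conditions collapses cleanly to Eqs.\eqref{eq:laacob1}--\eqref{eq:43} with no redundancy or omission.
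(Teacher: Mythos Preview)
Your proposal is correct and follows exactly the approach the paper intends: the theorem is stated without proof because it is a direct assembly of the cited bialgebra criterion (Eqs.\eqref{eq:laacob1}--\eqref{eq:laacob2}) together with Proposition \ref{pro:ba}, with the $S$-adjoint-admissibility hypothesis making items \ref{it:de:au2} and \ref{it:de:au4} of Definition \ref{de:au} automatic.
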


 \begin{cor}\label{cor:be} Let $((A, [, ]), N)$ be an $S$-adjoint-admissible Nijenhuis left Alia algebra and $r\in A\o A$ is antisymmetric. Define a linear map $\D$  by Eq.\eqref{eq:comuti}. Then $((A, [, ], \D), N, S)$ is a Nijenhuis left Alia bialgebra if Eq.(\ref{eq:ybq}) and the following equation hold:
 \begin{eqnarray}
 (S\o \id-\id\o N)(r)=0.\label{eq:salaeq2}
 \end{eqnarray}
 \end{cor}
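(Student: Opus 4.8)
The plan is to deduce this from Theorem \ref{thm:bd}, which says that $((A, [, ], \D), N, S)$ is a Nijenhuis left Alia bialgebra precisely when Eqs.\eqref{eq:laacob1}, \eqref{eq:laacob2}, \eqref{eq:41}, \eqref{eq:42}, \eqref{eq:43} all hold; so it suffices to check that the hypotheses ``$r$ antisymmetric'', ``$Al(r)=0$'', and ``$(S\o\id-\id\o N)(r)=0$'' force each of these five equations. First, since $r$ is antisymmetric we have $r+\tau(r)=0$, so every term in \eqref{eq:laacob1} that carries a factor of $(r+\tau(r))$ vanishes, and the first summand of \eqref{eq:laacob1} reduces to $(\id+\xi+\xi^2)\big((\id\o\id\o\mathcal{R}(x))(\id\o\tau)Al(r)\big)$, which is $0$ because $Al(r)=0$. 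For the same reason \eqref{eq:laacob2} holds identically, each of its three terms being a linear map applied to $r+\tau(r)=0$. (Equivalently, one may simply invoke Definition \ref{de:v}, since $r$ is an antisymmetric solution of the left Alia Yang–Baxter equation, to conclude that $(A, [, ], \D_r)$ is already a left Alia bialgebra; Eqs.\eqref{eq:laacob1}--\eqref{eq:laacob2} are exactly the conditions packaging that fact.)

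Next I would handle \eqref{eq:41}. By Remark \ref{eq:bc}\ref{it:eq:bc3}, the antisymmetry of $r$ makes $(S\o\id-\id\o N)(r)=0$ equivalent to $(N\o\id-\id\o S)(r)=0$, so under hypothesis \eqref{eq:salaeq2} both of these expressions vanish. Inspecting \eqref{eq:41}, its first summand is a linear map applied to $(N\o\id-\id\o S)(r)$ and its second summand is a linear map applied to $(S\o\id-\id\o N)(r)$; both are therefore $0$, so \eqref{eq:41} holds. The same structural observation disposes of \eqref{eq:42} and \eqref{eq:43}: in \eqref{eq:42} the first large term is applied to $(S\o\id-\id\o N)(r)=0$, and its remaining term is exactly the expression that Remark \ref{eq:bc}\ref{it:eq:bc2} guarantees vanishes once $(S\o\id-\id\o N)(r)=0$; likewise in \eqref{eq:43} the first term is applied to $(N\o\id-\id\o S)(r)=0$ and the leftover term vanishes by Remark \ref{eq:bc}\ref{it:eq:bc1}. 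Hence all five equations of Theorem \ref{thm:bd} hold, and $((A, [, ], \D), N, S)$ is a Nijenhuis left Alia bialgebra.

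There is no real obstacle here: the corollary is a clean specialization, and the only thing to be careful about is matching each vanishing expression in Eqs.\eqref{eq:41}--\eqref{eq:43} to the correct clause of Remark \ref{eq:bc} and correct use of antisymmetry to pass between $(S\o\id-\id\o N)(r)$ and $(N\o\id-\id\o S)(r)$. If one wanted to be fully explicit rather than citing the remark, the residual terms $((\mathcal{L}(x)-\mathcal{R}(x))\circ S^2\o\id+(\mathcal{R}(x)-\mathcal{L}(x))\o N^2)(r)$ and $(\id\o\mathcal{R}(x)\circ S^2-N^2\o\mathcal{R}(x))(r)$ can each be rewritten, using $r=-\tau(r)$ and $(S\o\id)(r)=(\id\o N)(r)$, as a linear map applied to $(S\o\id-\id\o N)(r)$, and hence seen directly to be zero; this is the one short computation I would actually write out.
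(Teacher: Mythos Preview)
Your proposal is correct and follows exactly the approach implicit in the paper: the corollary is an immediate specialization of Theorem \ref{thm:bd}, with Eqs.\eqref{eq:laacob1}--\eqref{eq:laacob2} killed by antisymmetry and $Al(r)=0$, and Eqs.\eqref{eq:41}--\eqref{eq:43} killed by Remark \ref{eq:bc}\ref{it:eq:bc1}--\ref{it:eq:bc3} once $(S\o\id-\id\o N)(r)=0$. The paper does not spell out a proof for this corollary, but your argument is precisely the intended one.
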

 
 \begin{rmk}
 Eq.\eqref{eq:salaeq2} can be seen as a compatible condition between left Alia Yang-Baxter equation and Nijenhuis operators on a left Alia algebra and a left Alia coalgebra. This condition is obtained by the bialgebraic theory of Nijenhuis left Alia algebras.
 \end{rmk}

 \begin{defi}\label{de:bf} Let $((A, [, ]), N)$ be a Nijenhuis left Alia algebra. Suppose that $r\in A\o A$ and $S:A\lr A$ is a linear map. Then Eq.\eqref{eq:ybq} together with Eq.\eqref{eq:salaeq2} is called an {\bf $S$-admissible left Alia Yang-Baxter equation in $((A, [, ]), N)$}.
 \end{defi}

 \begin{pro}\label{pro:bg} Let $((A, [, ]), N)$ be an $S$-adjoint-admissible Nijenhuis left Alia algebra and $r\in A\o A$ an antisymmetric solution of $S$-admissible left Alia Yang-Baxter equation in $((A, [, ]), N)$. Then $((A, [, ], \D), N, S)$ is a Nijenhuis left Alia bialgebra, where the linear map $\D=\D_{r}$ is defined by Eq.\eqref{eq:comuti}. In this case, we call this Nijenhuis left Alia bialgebra {\bf triangular}.
 \end{pro}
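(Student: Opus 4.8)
The plan is to recognize that this proposition is Corollary~\ref{cor:be} restated in the terminology of Definition~\ref{de:bf}. First I would unwind that definition: to say the antisymmetric $r\in A\o A$ is a solution of the $S$-admissible left Alia Yang-Baxter equation in $((A,[,]),N)$ is, by definition, to say that $Al(r)=0$, i.e.\ Eq.~\eqref{eq:ybq} holds, together with $(S\o\id-\id\o N)(r)=0$, i.e.\ Eq.~\eqref{eq:salaeq2} holds. Combined with the standing hypotheses that $r$ is antisymmetric and that $((A,[,]),N)$ is $S$-adjoint-admissible, these are exactly the hypotheses of Corollary~\ref{cor:be}, which then applies verbatim and yields that $((A,[,],\D),N,S)$, with $\D=\D_r$ defined by Eq.~\eqref{eq:comuti}, is a Nijenhuis left Alia bialgebra. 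The word ``triangular'' is attached by definition, in parallel with the triangular left Alia bialgebras of Definition~\ref{de:v}.

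If one prefers an argument not quoting Corollary~\ref{cor:be}, the route is to verify the five items of Definition~\ref{de:au} through Theorem~\ref{thm:bd}, that is, to check Eqs.~\eqref{eq:laacob1}--\eqref{eq:43}. Since $r+\tau(r)=0$, every summand of Eqs.~\eqref{eq:laacob1}--\eqref{eq:laacob2} carrying a factor $r+\tau(r)$ drops out, leaving only $(\id+\xi+\xi^{2})\big((\id\o\id\o\mathcal{R}(x))(\id\o\tau)Al(r)\big)$ in \eqref{eq:laacob1}, which vanishes because $Al(r)=0$; hence $(A,[,],\D_r)$ is a (triangular) left Alia bialgebra, settling item~\ref{it:de:au1} (this is the content of Definition~\ref{de:v}). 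For item~\ref{it:de:au3}, note that Remark~\ref{eq:bc}(\ref{it:eq:bc3}) turns the hypothesis $(S\o\id-\id\o N)(r)=0$ into $(N\o\id-\id\o S)(r)=0$, so both tensors appearing in Eq.~\eqref{eq:41} are zero and that equation holds; by Proposition~\ref{pro:ba}(\ref{it:pro:ba1}) this is equivalent to Eq.~\eqref{eq:nlaca}, so $((A,\D),S)$ is a Nijenhuis left Alia coalgebra. For item~\ref{it:de:au5}, Remark~\ref{eq:bc}(\ref{it:eq:bc1})--(\ref{it:eq:bc2}) force the residual ``leftover'' terms of Eqs.~\eqref{eq:42}--\eqref{eq:43} to vanish once $(S\o\id-\id\o N)(r)=0$ (equivalently $(N\o\id-\id\o S)(r)=0$), so Eqs.~\eqref{eq:42}--\eqref{eq:43} hold, and by Proposition~\ref{pro:ba}(\ref{it:pro:ba2}) this gives Eqs.~\eqref{eq:laacaadmi1}--\eqref{eq:laacaadmi2}. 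Items~\ref{it:de:au2} and \ref{it:de:au4} are the standing hypotheses.

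I do not expect a genuine obstacle: all the computation has already been done in Proposition~\ref{pro:ba}, Theorem~\ref{thm:bd} and Remark~\ref{eq:bc}, so only bookkeeping remains. The points to watch are that antisymmetry of $r$ is used in exactly two places (to collapse \eqref{eq:laacob1}--\eqref{eq:laacob2}, and to invoke Remark~\ref{eq:bc}(\ref{it:eq:bc3})), and that the $S$-adjoint-admissibility hypothesis is precisely what supplies Eqs.~\eqref{eq:15}--\eqref{eq:16}, which are used throughout the proof of Proposition~\ref{pro:ba} and hence in Theorem~\ref{thm:bd}.
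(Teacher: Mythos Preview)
Your proposal is correct and matches the paper's approach: the paper's proof is simply ``Straightforward,'' and you have correctly identified that the proposition is nothing more than Corollary~\ref{cor:be} restated in the language of Definition~\ref{de:bf}. Your optional expanded verification via Theorem~\ref{thm:bd}, Proposition~\ref{pro:ba}, and Remark~\ref{eq:bc} is also accurate and supplies the details the paper omits.
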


 \begin{proof}
 Straightforward.
 \end{proof}

 \begin{ex}\label{ex:bh} Let $((A, [, ], \D), N, S)$ be a Nijenhuis left Alia bialgebra given in Example \ref{ex:auu}. Then $((A, [, ], \D), N, S)$ is triangular with $r=e_{1}\o e_{2}-e_{2}\o e_{1}$.
 \end{ex}

 \subsection{Relative Rota-Baxter operators on Nijenhuis left Alia algebras}

 \begin{defi}\label{de:bi}\cite{KLY} Let $(A, [, ])$ be a left Alia algebra with a representation $(V, \ell, r)$. A linear map $T: V\lr A$ is called a {\bf relative Rota-Baxter operator of $(A, [, ])$ associated to $(V, \ell, r)$} if the following equation holds:
 \begin{equation}
 [T(u), T(v)]=T(\ell(T(u))(v)+r(T(v))(u)), \forall~ u, v\in V. \label{eq:weakr1}
 \end{equation}
 \end{defi}
 
 For vector spaces $A$, we can identify an element $r\in A\o A$ as a linear map $r^\#: A^*\lr A$ by 
 \begin{eqnarray}
 r^\#(a^*):=\sum\limits_{i} \langle a^*, a_i\rangle b_i,~~\forall~a^*\in A^*.\label{eq:rsharp}
 \end{eqnarray}

 \begin{lem}\label{lem:bii}{\em \cite{KLY}} Let $(A, [, ])$ be a left Alia algebra and $r\in A\o A$ anti-symmetric. Then $r$ is a solution of the left Alia Yang-Baxter equation in $(A, [, ])$ if and only if $r^\#$ defined in Eq.(\ref{eq:rsharp}) is a relative Rota-Baxter operator of $(A, [, ])$ associated to $(A^{*}, \mathcal{L_{A}^{*}},\mathcal{L_{A}^{*}}-\mathcal{R_{A}^{*}})$, i.e.,
 \begin{eqnarray}
 [r^\#(a^{*}), r^\#(b^{*})]=r^\#(\mathcal{L_{A}^{*}}(r^\#(a^{*}))b^{*}+(\mathcal{L_{A}^{*}}
 -\mathcal{R_{A}^{*}})(r^\#(b^{*}))a^{*}), ~\forall~ a^{*}, b^{*}\in A^{*}.\label{eq:54}
 \end{eqnarray}
 \end{lem}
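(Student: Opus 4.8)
The plan is to verify directly that the anti-symmetry of $r$ and Eq.\eqref{eq:weakr1} are equivalent to the defining identity \eqref{eq:ybq} of the left Alia Yang-Baxter equation, after translating both sides through the identification \eqref{eq:rsharp}. First I would fix a basis and write $r=\sum_i a_i\o b_i$ with $r+\tau(r)=0$, so that for $a^*,b^*\in A^*$ one has $r^\#(a^*)=\sum_i\langle a^*,a_i\rangle b_i=-\sum_i\langle b^*\dots\rangle$, i.e.\ the anti-symmetry lets me rewrite $r^\#(a^*)$ using either tensor leg. The key computational step is to expand the left-hand side $[r^\#(a^*),r^\#(b^*)]=\sum_{i,j}\langle a^*,a_i\rangle\langle b^*,a_j\rangle[b_i,b_j]$ and, using $\langle \mathcal{L_A^*}(x)(c^*),y\rangle=-\langle c^*,[x,y]\rangle$ and $\langle \mathcal{R_A^*}(x)(c^*),y\rangle=-\langle c^*,[y,x]\rangle$ from \eqref{eq:repdual} and \eqref{eq:4}, to expand the right-hand side $r^\#(\mathcal{L_A^*}(r^\#(a^*))b^*+(\mathcal{L_A^*}-\mathcal{R_A^*})(r^\#(b^*))a^*)$ into a sum of three terms matching the three summands of $Al(r)$.

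Concretely, I would show that pairing \eqref{eq:54} against an arbitrary $c^*\in A^*$ and using anti-symmetry to move $c^*$ onto the appropriate leg turns the equation $[r^\#(a^*),r^\#(b^*)]-r^\#(\dots)=0$ into $\langle a^*\o b^*\o c^*,\ Al(r)\rangle=0$ (up to the sign bookkeeping coming from \eqref{eq:repdual}), where $Al(r)$ is as in \eqref{eq:ybq}; since $a^*,b^*,c^*$ are arbitrary this is exactly $Al(r)=0$. The first summand $\sum_{i,j}[a_i,a_j]\o b_i\o b_j$ pairs against $[r^\#(a^*),r^\#(b^*)]$ after one application of anti-symmetry on both the first and second tensor factors; the middle summand $\sum_{i,j}a_i\o([a_j,b_i]-[b_i,a_j])\o b_j$ corresponds to the $(\mathcal{L_A^*}-\mathcal{R_A^*})(r^\#(b^*))a^*$ contribution; and the last summand $-\sum_{i,j}a_i\o a_j\o[b_j,b_i]$ corresponds to the $\mathcal{L_A^*}(r^\#(a^*))b^*$ contribution. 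Both implications run through the same chain of equalities, so the ``if and only if'' is immediate once the bookkeeping is set up.

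The main obstacle I anticipate is purely organizational rather than conceptual: keeping track of which tensor leg carries which dual vector, and of the signs introduced by \eqref{eq:repdual}, while repeatedly invoking $r+\tau(r)=0$ to bring every term into the canonical form $\langle a^*\o b^*\o c^*,\ (\text{tensor})\rangle$. A clean way to avoid getting lost is to first record the three auxiliary identities
\begin{align*}
\langle a^*\o b^*\o c^*,\ \textstyle\sum_{i,j}[a_i,a_j]\o b_i\o b_j\rangle
 &= -\langle c^*,\ [r^\#(a^*),r^\#(b^*)]\rangle,\\
\langle a^*\o b^*\o c^*,\ \textstyle\sum_{i,j}a_i\o a_j\o [b_j,b_i]\rangle
 &= -\langle c^*,\ r^\#(\mathcal{L_A^*}(r^\#(a^*))b^*)\rangle,\\
\langle a^*\o b^*\o c^*,\ \textstyle\sum_{i,j}a_i\o([a_j,b_i]-[b_i,a_j])\o b_j\rangle
 &= \langle c^*,\ r^\#((\mathcal{L_A^*}-\mathcal{R_A^*})(r^\#(b^*))a^*)\rangle,
\end{align*}
each of which follows from \eqref{eq:rsharp}, \eqref{eq:repdual}, \eqref{eq:4}, and anti-symmetry of $r$, and then assemble them; adding the three gives $-\langle c^*, [r^\#(a^*),r^\#(b^*)]-r^\#(\dots)\rangle = \langle a^*\o b^*\o c^*, Al(r)\rangle$, from which the equivalence follows at once. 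Since the statement is attributed to \cite{KLY}, I would also remark that the proof is recorded there and present the computation in the condensed form above.
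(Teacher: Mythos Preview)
The paper does not prove this lemma; it simply cites \cite{KLY}. Your overall strategy --- pair $Al(r)$ against $a^*\o b^*\o c^*$, use anti-symmetry of $r$ together with \eqref{eq:repdual} to rewrite each summand in terms of $r^\#$, and read off Eq.~\eqref{eq:54} --- is the correct and standard route.

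However, your three displayed auxiliary identities are mis-matched: each summand of $Al(r)$ corresponds to a different piece of the Rota--Baxter expression than you claim. Using $\sum_i\langle u^*,b_i\rangle a_i=-r^\#(u^*)$ and $\langle u^*,r^\#(v^*)\rangle=-\langle v^*,r^\#(u^*)\rangle$, a direct computation gives
\begin{align*}
\big\langle a^*\o b^*\o c^*,\ \textstyle\sum_{i,j}[a_i,a_j]\o b_i\o b_j\big\rangle
 &= \big\langle c^*,\ r^\#(\mathcal{L_A^*}(r^\#(b^*))a^*)\big\rangle,\\
\big\langle a^*\o b^*\o c^*,\ \textstyle\sum_{i,j}a_i\o a_j\o [b_j,b_i]\big\rangle
 &= \big\langle c^*,\ [r^\#(b^*),r^\#(a^*)]\big\rangle,\\
\big\langle a^*\o b^*\o c^*,\ \textstyle\sum_{i,j}a_i\o([a_j,b_i]-[b_i,a_j])\o b_j\big\rangle
 &= \big\langle c^*,\ r^\#((\mathcal{L_A^*}-\mathcal{R_A^*})(r^\#(a^*))b^*)\big\rangle.
\end{align*}
So the \emph{first} summand of $Al(r)$ contributes the $\mathcal{L_A^*}$ term (not the bracket), the \emph{third} summand contributes the bracket (not $\mathcal{L_A^*}$), and the middle summand contributes the $(\mathcal{L_A^*}-\mathcal{R_A^*})$ term but with $a^*$ and $b^*$ swapped relative to your claim. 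In particular, your assertion that ``anti-symmetry on both the first and second tensor factors'' turns the first summand into $-\langle c^*,[r^\#(a^*),r^\#(b^*)]\rangle$ is false: anti-symmetry only lets you swap $a_k\leftrightarrow b_k$ within a fixed index, it cannot move the bracket from the first tensor leg to the third. After correcting the matching, $\langle a^*\o b^*\o c^*,Al(r)\rangle$ equals $-\langle c^*,\cdot\rangle$ applied to \eqref{eq:54} with $a^*$ and $b^*$ interchanged, which still yields the equivalence. The organizational obstacle you anticipated is exactly where the proposal went wrong; with the corrected identities the argument is complete.
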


 \begin{thm}\label{thm:bj} Let $((A, [, ]), N)$ be a Nijenhuis left Alia algebra, $r\in A\o A$ antisymmetric and $S: A\lr A$ a linear map. Then $r$ is a solution of the $S$-admissible left Alia Yang-Baxter equation in $((A, [, ]), N)$ if and only if $r^\#$ is a relative Rota-Baxter operator of $(A, [, ])$ associated to $(A^{*}, \mathcal{L_{A}^{*}}, \mathcal{L_{A}^{*}}-\mathcal{R_{A}^{*}})$ and the equation below holds:
 \begin{eqnarray}\label{eq:bj1}
 N\circ r^\#=r^\#\circ S^{*}.
 \end{eqnarray}
 \end{thm}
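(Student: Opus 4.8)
The statement packages together two things: first, that $r$ solves the $S$-admissible left Alia Yang-Baxter equation, i.e.\ Eq.\eqref{eq:ybq} together with Eq.\eqref{eq:salaeq2}; and second, the relative Rota-Baxter condition plus Eq.\eqref{eq:bj1}. The natural strategy is to split the equivalence into its two halves and handle them separately, since the two are essentially independent once we work through the dictionary between tensors and maps. For the first half, namely Eq.\eqref{eq:ybq} $\Longleftrightarrow$ $r^{\#}$ is a relative Rota-Baxter operator associated to $(A^{*}, \mathcal{L}_{A}^{*}, \mathcal{L}_{A}^{*}-\mathcal{R}_{A}^{*})$, I would simply invoke Lemma \ref{lem:bii}: this is exactly the content of that lemma (recall $r$ is assumed antisymmetric), so nothing new is needed there.

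The real work is the second half: that the compatibility condition Eq.\eqref{eq:salaeq2}, $(S\o\id-\id\o N)(r)=0$, is equivalent to Eq.\eqref{eq:bj1}, $N\circ r^{\#}=r^{\#}\circ S^{*}$. The plan is to unfold both sides against an arbitrary $a^{*}\in A^{*}$ using the definition \eqref{eq:rsharp} of $r^{\#}$ and the definition of the dual map $S^{*}$ from \eqref{eq:repdual} (equivalently $\langle S^{*}(a^{*}),x\rangle=\langle a^{*},S(x)\rangle$ up to sign conventions, which I should match carefully). Writing $r=\sum_i a_i\o b_i$, one computes $N(r^{\#}(a^{*}))=\sum_i\langle a^{*},a_i\rangle N(b_i)$, which is the image of the second tensor leg under $\id\o N$ paired with $a^{*}$ in the first leg; and $r^{\#}(S^{*}(a^{*}))=\sum_i\langle S^{*}(a^{*}),a_i\rangle b_i=\sum_i\langle a^{*},S(a_i)\rangle b_i$, which is the image of the first tensor leg under $S\o\id$ paired with $a^{*}$. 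Hence $N\circ r^{\#}=r^{\#}\circ S^{*}$ says precisely that $(\id\o N)(r)$ and $(S\o\id)(r)$ have the same pairing with every $a^{*}$ in the first slot, i.e.\ $(S\o\id)(r)=(\id\o N)(r)$, which is Eq.\eqref{eq:salaeq2}. Combining the two halves gives the theorem.

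The main obstacle, such as it is, will be bookkeeping around the two conventions: the sign in the dual-map definition \eqref{eq:repdual} (which carries a minus sign, used for representations) versus the plain transpose $\langle S^{*}(a^{*}),x\rangle=\langle a^{*},S(x)\rangle$ that is the natural reading of $S^{*}$ in Eq.\eqref{eq:bj1}; and making sure that pairing $r\in A\o A$ against $A^{*}$ in the \emph{first} versus \emph{second} tensor factor is done consistently with \eqref{eq:rsharp}. Once those are pinned down the argument is a one-line unwinding of definitions, so I would present it compactly: cite Lemma \ref{lem:bii} for the Rota-Baxter equivalence, then give the short dual-pairing computation for the equivalence of \eqref{eq:salaeq2} and \eqref{eq:bj1}.
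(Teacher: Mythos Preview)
Your proposal is correct and matches the paper's own proof almost verbatim: the paper cites Lemma \ref{lem:bii} for the equivalence of Eq.\eqref{eq:ybq} with the relative Rota-Baxter condition, and then does the same short dual-pairing computation (pairing against $a^{*}\o b^{*}$) to show Eq.\eqref{eq:salaeq2} $\Longleftrightarrow$ Eq.\eqref{eq:bj1}. Your caution about the sign convention is well placed; here $S^{*}$ is the plain transpose $\langle S^{*}(a^{*}),x\rangle=\langle a^{*},S(x)\rangle$, not the signed version from Eq.\eqref{eq:repdual}.
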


 \begin{proof} By Lemma \ref{lem:bii}, $r$ is a solution of left Alia Yang-Baxter equation in $(A, [, ])$ if and only if Eq.\eqref{eq:54} holds. Moreover, for all $a^{*}, b^{*}\in A^{*}$, we have
 \begin{eqnarray*}
 &&\langle b^{*}, r^\#(S^{*}(a^{*})) \rangle=\langle S^{*}(a^{*})\o b^{*}, r\rangle =\langle a^{*}\o b^{*}, (S\o \id)(r) \rangle,\\
 &&\langle b^{*}, N(r^\#(a^{*})) \rangle=\langle  N^{*}(b^{*}), r^\#(a^{*}) \rangle =\langle a^{*}\o N^{*}(b^{*}), r \rangle=\langle  a^{*}\o b^{*}, (\id\o N)r \rangle.
 \end{eqnarray*}
 Hence, Eq.\eqref{eq:bj1} if and only if Eq.\eqref{eq:salaeq2} holds. 
 \end{proof}

 \begin{defi}\label{de:bk} Let $((A, [, ]), N)$ be a Nijenhuis left Alia algebra, $(V, \ell, r)$ a representation of $(A, [, ])$ and $\a: V\lr V$ a linear map. A linear map $T: V\lr A$ is called a {\bf weak relative Rota-Baxter operator of $((A, [, ]), N)$ associated to $(V, \ell, r)$ and $\a$} if $T$ is a relative Rota-Baxter operator of $(A, [, ])$ associated to $(V, \ell, r)$ and the equation below holds:
 \begin{eqnarray}
 N\circ T=T\circ \a.\label{eq:weakr2}
 \end{eqnarray}
 If $((V, \ell, r), \a)$ is a representation of $((A, [, ]), N)$, then $T$ is called a {\bf relative Rota-Baxter operator of $((A, [, ]), N)$ associated to $((V, \ell, r), \a)$}.
 \end{defi}
 
 Theorem \ref{thm:bj} can be rewritten in terms of relative Rota-Baxter operators as follows.

 \begin{cor}\label{cor:bl} Let $((A, [, ]), N)$ be a Nijenhuis left Alia algebra, $r\in A\o A$ antisymmetric and $S: A\lr A$ a linear map. Then $r$ is a solution of $S$-admissible left Alia  Yang-Baxter equation in $((A, [, ]), N)$ if and only if $r^\#$ is a weak relative Rota-Baxter operator of $((A, [, ]), N)$ associated to $(A^{*}, \mathcal{L_{A}^{*}}, \mathcal{L_{A}^{*}}-\mathcal{R_{A}^{*}})$ and $S^{*}$. In addition, if $((A, [, ]), N)$ is an $S$-adjoint-admissible Nijenhuis left Alia algebra, then $r$ is a solution of $S$-admissible left Alia Yang-Baxter equation in $((A, [, ]), N)$ if and only if $r^\#$ is a relative Rota-Baxter operator of $((A, [, ]), N)$ associated to $((A^{*}, \mathcal{L_{A}^{*}}, \mathcal{L_{A}^{*}}-\mathcal{R_{A}^{*}}), S^{*})$.
 \end{cor}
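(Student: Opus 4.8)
The plan is to obtain this corollary as a direct translation of Theorem \ref{thm:bj} into the language of (weak) relative Rota-Baxter operators introduced in Definition \ref{de:bk}, so that essentially no new computation is needed. First I would spell out what it means for $r^{\#}$ to be a weak relative Rota-Baxter operator of $((A,[,]),N)$ associated to $(A^{*},\mathcal{L}_{A}^{*},\mathcal{L}_{A}^{*}-\mathcal{R}_{A}^{*})$ and $S^{*}$: by Definition \ref{de:bk} this is precisely the conjunction of (i) $r^{\#}$ being a relative Rota-Baxter operator of the left Alia algebra $(A,[,])$ associated to $(A^{*},\mathcal{L}_{A}^{*},\mathcal{L}_{A}^{*}-\mathcal{R}_{A}^{*})$, i.e. Eq.\eqref{eq:54} holds, and (ii) the compatibility $N\ci r^{\#}=r^{\#}\ci S^{*}$, which is exactly Eq.\eqref{eq:weakr2} for the data $T=r^{\#}$, $\a=S^{*}$, that is, Eq.\eqref{eq:bj1}.

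Next I would invoke Theorem \ref{thm:bj}, which asserts that $r$ is a solution of the $S$-admissible left Alia Yang-Baxter equation in $((A,[,]),N)$ if and only if $r^{\#}$ satisfies Eq.\eqref{eq:54} together with Eq.\eqref{eq:bj1}. Combining this with the reformulation of the previous paragraph yields the first equivalence of the corollary verbatim; nothing further is required for that half.

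For the ``in addition'' clause, I would recall that, by Lemma \ref{lem:o} together with Definition \ref{de:p} (equivalently, by Proposition \ref{pro:ao}), saying that $((A,[,]),N)$ is $S$-adjoint-admissible is exactly saying that $((A^{*},\mathcal{L}_{A}^{*},\mathcal{L}_{A}^{*}-\mathcal{R}_{A}^{*}),S^{*})$ is a representation of the Nijenhuis left Alia algebra $((A,[,]),N)$. Under this hypothesis the last sentence of Definition \ref{de:bk} lets us promote the phrase ``weak relative Rota-Baxter operator associated to $(A^{*},\mathcal{L}_{A}^{*},\mathcal{L}_{A}^{*}-\mathcal{R}_{A}^{*})$ and $S^{*}$'' to ``relative Rota-Baxter operator of $((A,[,]),N)$ associated to $((A^{*},\mathcal{L}_{A}^{*},\mathcal{L}_{A}^{*}-\mathcal{R}_{A}^{*}),S^{*})$'', whence the second equivalence follows immediately from the first.

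I do not anticipate a genuine obstacle: the corollary is in effect a bookkeeping restatement of Theorem \ref{thm:bj} in the terminology of Definition \ref{de:bk}. The only points needing a moment's care are the pairing conventions, namely checking that $N\ci r^{\#}=r^{\#}\ci S^{*}$ is indeed the same identity as $(S\o\id)(r)=(\id\o N)(r)$ via Eq.\eqref{eq:rsharp}, and that the antisymmetry of $r$ is used consistently so that no sign is lost; but both of these were already handled in the proof of Theorem \ref{thm:bj} (and are reflected in Remark \ref{eq:bc}), so here one simply cites them.
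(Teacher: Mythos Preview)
Your proposal is correct and matches the paper's approach: the paper gives no explicit proof, introducing the corollary with the sentence ``Theorem \ref{thm:bj} can be rewritten in terms of relative Rota-Baxter operators as follows,'' so it is intended as an immediate restatement of Theorem \ref{thm:bj} via Definition \ref{de:bk}, exactly as you describe. The only minor quibble is that the parenthetical appeal to Proposition \ref{pro:ao} is not quite on point (that proposition concerns quadratic structures); Lemma \ref{lem:o} together with Definition \ref{de:p} already gives precisely what you need for the ``in addition'' clause.
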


 \begin{thm}\label{thm:bn} Let $((A, [, ]), N)$ be a Nijenhuis left Alia algebra, $(V, \ell, r)$ a representation of $(A, [, ])$, $S: A\lr A$ and $\a, \b: V\lr V$ linear maps. Then the following conditions are equivalent.
 \begin{enumerate}[(1)]
 \item \label{it:thm:bn1} There is a Nijenhuis left Alia algebra $(A\ltimes_{\ell, r} V, N+\a)$ such that the linear map $S+\b$ on $A\oplus V$ is adjoint-\admt $(A\ltimes_{\ell, r} V, N+\a)$.		
 \item \label{it:thm:bn2} There is a Nijenhuis left Alia algebra $(A\ltimes_{\ell^{*}, \ell^{*}-r^{*}} V^{*}, N+\b^{*})$ such that the linear map $S+\a^{*}$ on $A\oplus V^{*}$ is adjoint-\admt  $(A\ltimes_{\ell^{*}, \ell^{*}-r^{*}} V^{*}, N+\b^{*})$.	
 \item \label{it:thm:bn3} The following conditions are satisfied:		
 \begin{enumerate}
 \item \label{it:thm:bna} $((V, \ell, r), \a)$ is a representation of $((A, [, ]), N)$, that is, Eqs.\eqref{eq:8} and \eqref{eq:9} hold.		
 \item \label{it:thm:bnb} $S$ is adjoint-\admt $((A, [, ]), N)$,  that is, Eqs.\eqref{eq:15} and \eqref{eq:16} hold.		
 \item \label{it:thm:bnc} $\b$ is \admt $((A, [,]), N)$ on $(V, \ell, r)$,  that is, Eqs.\eqref{eq:13} and \eqref{eq:14} hold.		
 \item \label{it:thm:bnd} The following equations hold, for all $x\in A$, $v\in V$,
 \begin{eqnarray}
 &\b(r(x)(\a(v)))+r(S^{2}(x))(v)=r(S(x))(\a(v))+\b(r(S(x))(v)),&\label{eq:59}\\
 &\b(\ell(x)(\a(v)))+\ell(S^{2}(x))(v)=\ell(S(x))(\a(v))+\b(\ell(S(x))(v)).&\label{eq:60}
 \end{eqnarray}
 \end{enumerate}
 \end{enumerate}
 \end{thm}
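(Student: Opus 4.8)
The plan is to prove the two equivalences $(1)\Leftrightarrow(3)$ and $(2)\Leftrightarrow(3)$, which together give the theorem.

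\smallskip
\emph{Step 1: $(1)\Leftrightarrow(3)$.} By Proposition \ref{pro:n} together with its converse (obtained by separating the Nijenhuis identity \eqref{eq:7} for $N+\a$ on $A\oplus V$ into its $A$- and $V$-components, the first being automatic and the second being exactly Eqs.\eqref{eq:8}-\eqref{eq:9}), $(A\ltimes_{\ell,r}V, N+\a)$ is a Nijenhuis left Alia algebra if and only if $((V,\ell,r),\a)$ is a representation of $((A,[,]),N)$, i.e. if and only if condition $(3)(a)$ holds. Granting $(3)(a)$, the remaining content of $(1)$ is that $S+\b$ is adjoint-admissible to $(A\ltimes_{\ell,r}V, N+\a)$, that is, that Eqs.\eqref{eq:15}-\eqref{eq:16} hold with $[,]$ replaced by $[,]_{A\oplus V}$, $N$ by $N+\a$ and $S$ by $S+\b$. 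I would substitute the general elements $x+u$, $y+v$ (with $x,y\in A$ and $u,v\in V$), expand everything by means of Eq.\eqref{eq:3} and the definitions of $N+\a$ and $S+\b$, and then project onto $A$ and onto $V$. The $A$-components are exactly Eqs.\eqref{eq:15}-\eqref{eq:16} for $((A,[,]),N)$ with the operator $S$, which is condition $(3)(b)$. Each $V$-component is separately linear in $u$ and in $v$, hence equivalent to the conjunction of the slice $u=0$ and the slice $v=0$: carrying out the expansion, from Eq.\eqref{eq:15} the slice $u=0$ is Eq.\eqref{eq:13} and the slice $v=0$ is Eq.\eqref{eq:59}, while from Eq.\eqref{eq:16} the slice $u=0$ is Eq.\eqref{eq:60} and the slice $v=0$ is Eq.\eqref{eq:14}. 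Thus the $V$-components together amount to conditions $(3)(c)$ and $(3)(d)$, and $(1)\Leftrightarrow(3)$ follows.

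\smallskip
\emph{Step 2: $(2)\Leftrightarrow(3)$.} The key observation is that condition $(2)$ is precisely condition $(1)$ for the ``dualized data'' obtained by replacing $(V,\ell,r,\a,\b)$ with $(V^{*},\ell^{*},\ell^{*}-r^{*},\b^{*},\a^{*})$; here $(V^{*},\ell^{*},\ell^{*}-r^{*})$ is again a representation of $(A,[,])$ by \cite{KLWY}, so Step 1 applies and yields that $(2)$ is equivalent to condition $(3)$ written for this dualized data. It then remains to recognize that condition $(3)$ for the dualized data is equivalent to condition $(3)$ for the original data. By Lemma \ref{lem:o}, ``$((V^{*},\ell^{*},\ell^{*}-r^{*}),\b^{*})$ is a representation of $((A,[,]),N)$'' is exactly Eqs.\eqref{eq:13}-\eqref{eq:14} for $(V,\ell,r,\b)$, so $(3)(a)$ for the dualized data is $(3)(c)$ for the original data; by Lemma \ref{lem:o} applied to $(V^{*},\ell^{*},\ell^{*}-r^{*})$ together with the biduality $V^{**}\cong V$ (under which the dual representation of $(V^{*},\ell^{*},\ell^{*}-r^{*})$ is $(V,\ell,r)$ and $(\a^{*})^{*}$ is $\a$), ``$\a^{*}$ is admissible to $((A,[,]),N)$ with respect to $(V^{*},\ell^{*},\ell^{*}-r^{*})$'' becomes ``$((V,\ell,r),\a)$ is a representation of $((A,[,]),N)$'', so $(3)(c)$ for the dualized data is $(3)(a)$ for the original data; $(3)(b)$ involves only $A,[,],N,S$ and is literally unchanged; and $(3)(d)$ for the dualized data, namely Eqs.\eqref{eq:59}-\eqref{eq:60} written with $(\ell^{*},\ell^{*}-r^{*},\b^{*},\a^{*})$, becomes equivalent to Eqs.\eqref{eq:59}-\eqref{eq:60} for the original data after pairing with vectors of $V$ and using Eq.\eqref{eq:repdual}. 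Hence condition $(3)$ is stable under passing to the dualized data, and $(2)\Leftrightarrow(3)$.

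\smallskip
The main obstacle is the bookkeeping in Step 1: one has to expand the two adjoint-admissibility identities for the semidirect product in full, simultaneously tracking the contributions of $N$, $\a$, $S$, $\b$, $\ell$, $r$ (and of $N^{2}$, $S^{2}$, $\a^{2}$, $\b^{2}$), and then correctly match the four resulting slices to Eqs.\eqref{eq:13}, \eqref{eq:14}, \eqref{eq:59}, \eqref{eq:60}; no single step is subtle, but the number of terms makes it error-prone. The dualization in Step 2 is conceptually the same transposing argument already used in the proof of Lemma \ref{lem:o}, so it should be routine once Step 1 is in place; the only point requiring care there is keeping the signs from Eq.\eqref{eq:repdual} straight when identifying the dualized forms of Eqs.\eqref{eq:59}-\eqref{eq:60} with the originals.
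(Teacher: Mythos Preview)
Your proposal is correct and follows essentially the same approach as the paper: both prove $(1)\Leftrightarrow(3)$ by expanding the adjoint-admissibility identities \eqref{eq:15}--\eqref{eq:16} on the semidirect product and separating into $A$- and $V$-components (the latter further split by setting $u=0$ or $v=0$), and then deduce $(2)\Leftrightarrow(3)$ by applying the first equivalence to the dualized data $(V^{*},\ell^{*},\ell^{*}-r^{*},\b^{*},\a^{*})$ and translating back via the pairing \eqref{eq:repdual}. Your Step 2 is in fact slightly more explicit than the paper's in spelling out, via Lemma \ref{lem:o} and biduality, how conditions $(3)(a)$ and $(3)(c)$ interchange under dualization.
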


 \begin{proof}
 \ref{it:thm:bn1}$\Leftrightarrow$\ref{it:thm:bn3}, By Proposition \ref{pro:n}, we have  $(A\ltimes_{\ell, r} V,  N+\a)$ is a Nijenhuis left Alia algebra if and only if $((V, \ell, r), \a)$ a representation of $((A, [, ]), N)$. Let $x, y \in A$ and $u, v\in V$, we have
 \begin{eqnarray*}
 &&\hspace{-16mm}(S+\b)([(N+\a)(x+u), y+v]_{A\oplus V})+[x+u, (S+\b)^{2}(y+v)]_{A\oplus V}\\
 &&\hspace{-1mm}-[(N+\a)(x+u), (S+\b)(y+v)]_{A\oplus V}-(S+\b)([x+u, (S+\b)(y+v)]_{A\oplus V})\\ 
 &\stackrel{\eqref{eq:3}\eqref{eq:11}}{=}&\hspace{-3mm}S([N(x),  y])+\b(\ell(N(x))(v))+\b(r(y)(\a(u)))+[x, S^{2}(y)]+\ell(x)(\b^{2}(v))\\
 &&\hspace{-3mm}+r(S^{2}(y))(u)-[N(x), S(y)]-\ell(N(x))(\b(v))-r(S(y))(\a(u))-S([x, S(y)])\\
 &&\hspace{-3mm}-\b(\ell(x)(\b(v)))-\b(r(S(y))(u)). 
 \end{eqnarray*}
 Then $(S+\b)([(N+\a)(x+u), y+v]_{A\oplus V})+[x+u, (S+\b)^{2}(y+v)]_{A\oplus V}=[(N+\a)(x+u), (S+\b)(y+v)]_{A\oplus V}+(S+\b)([x+u, (S+\b)(y+v)]_{A\oplus V})$ if and only if Eqs.\eqref{eq:13}, \eqref{eq:15} and \eqref{eq:59} hold.

 Similarly, we can get that 
 $(S+\b)([x+u, (N+\a)(y+v)]_{A\oplus V})+[(S+\b)^{2}(x+u), y+v]_{A\oplus V}=[(S+\b)(x+u), (N+\a)(y+v)]_{A\oplus V}+(S+\b)([ (S+\b)(x+u), y+v]_{A\oplus V})$ if and only if Eqs.\eqref{eq:14}, \eqref{eq:16} and \eqref{eq:60} hold.
	
 \ref{it:thm:bn2}$\Leftrightarrow$\ref{it:thm:bn3}, in Item \ref{it:thm:bn1}, take
 $$
 v=v^{*},\ell=\ell^{*},r=\ell^{*}-r^{*}, \a=\b^{*},\b=\a^{*}.
 $$
 Then from the above equivalence between \ref{it:thm:bn1} and \ref{it:thm:bn3}, we have \ref{it:thm:bn2} holds if and only if  \ref{it:thm:bna}-\ref{it:thm:bnc} hold and the following equations hold:
 \begin{eqnarray}
 &\a^{*}(r^{*}(x)(\b^{*}(u^{*})))+r^{*}(S^{2}(x))(u^{*})-r^{*}(S(y))(\b^{*}(u^{*}))
 -\a^{*}(r^{*}(S(y))(u^{*}))=0,&\label{eq:61}\\
 &\a^{*}(\ell^{*}(x)(\b^{*}(u^{*})))+\ell^{*}(S^{2}(x))(u^{*})-\ell^{*}(S(y))(\b^{*}(u^{*}))
 -\a^{*}(\ell^{*}(S(y))(u^{*}))=0.&\label{eq:62}
 \end{eqnarray}
 For all $x\in A$, $v\in V$ and $u^{*}\in V^{*}$, we have
 \begin{eqnarray*}
 &&\hspace{-20mm}\big\langle \a^{*}(r^{*}(x)(\b^{*}(u^{*})))+r^{*}(S^{2}(x))(u^{*})-r^{*}(S(y))(\b^{*}(u^{*}))
 -\a^{*}(r^{*}(S(y))(u^{*})), v\big\rangle \\
 &\stackrel{\eqref{eq:repdual}}{=}&\big\langle u^{*}, -\b(r(x)(\a(v)))-r(S^{2}(x))(v)+\b(r(S(y))(v))+r(S(y))(\a(v))\big\rangle.\\
 &&\hspace{-20mm}\big\langle \a^{*}(\ell^{*}(x)(\b^{*}(u^{*})))+\ell^{*}(S^{2}(x))(u^{*})-\ell^{*}(S(y))(\b^{*}(u^{*}))
 -\a^{*}(\ell^{*}(S(y))(u^{*})), v\big\rangle \\
 &\stackrel{\eqref{eq:repdual}}{=}& \big\langle u^{*}, -\b(\ell(x)(\a(v)))-\ell(S^{2}(x))(v)+\b(\ell(S(y))(v))+\ell(S(y))(\a(v))\big\rangle.
 \end{eqnarray*}
 Hence, Eq.\eqref{eq:61} holds if and only if Eq.\eqref{eq:59} holds, and 	Eq.\eqref{eq:62} holds if and only if Eq.\eqref{eq:60} holds.
 \end{proof}

 \begin{lem}\label{lem:bm}{\em \cite{KLY}} Let $(A, [, ])$ be a left Alia algebra with a representation  $(V, \ell, r)$. Let $T: V\lr A$ be a linear map and $T_{\sharp}\in V^{*}\o A\subset (A\oplus V^{*})\o  (A\oplus V^{*})$ given by
 \begin{equation*}
 \langle T_{\sharp}, u\o a^{*}\rangle:=\langle T(u), a^{*}\rangle, \forall~ u\in V, a^{*}\in A^{*}.
 \end{equation*}
 Then $r=T_{\sharp}-\tau(T_{\sharp})$ is an antisymmetric solution of the left Alia Yang-Baxter equation in $A\ltimes_{\ell^{*}, \ell^{*}-r^{*}} V^{*}$ if and only if $T$ is a relative Rota-Baxter operator of $(A, [, ])$ associated to  $(V, \ell, r)$.
 \end{lem}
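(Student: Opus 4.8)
The plan is to reduce the statement to Lemma \ref{lem:bii}, applied to the ambient left Alia algebra $\widehat{A}:=A\ltimes_{\ell^{*},\ell^{*}-r^{*}}V^{*}$ (which is a left Alia algebra precisely because $(V^{*},\ell^{*},\ell^{*}-r^{*})$ is a representation of $(A,[,])$). By Lemma \ref{lem:bii}, the antisymmetric element $r=T_{\sharp}-\tau(T_{\sharp})\in\widehat{A}\o\widehat{A}$ solves the left Alia Yang-Baxter equation in $\widehat{A}$ if and only if $\rr:\widehat{A}^{*}\lr\widehat{A}$, defined by Eq.\eqref{eq:rsharp}, is a relative Rota-Baxter operator of $\widehat{A}$ associated to $(\widehat{A}^{*},\mathcal{L}_{\widehat{A}}^{*},\mathcal{L}_{\widehat{A}}^{*}-\mathcal{R}_{\widehat{A}}^{*})$. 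So the first step is to make $\rr$ explicit: using the identification $\widehat{A}^{*}=A^{*}\oplus (V^{*})^{*}\cong A^{*}\oplus V$ and the hypothesis $T_{\sharp}\in V^{*}\o A$, a direct computation from Eq.\eqref{eq:rsharp} gives $\rr(a^{*}+v)=T(v)-T^{*}(a^{*})$ for $a^{*}\in A^{*}$, $v\in V$, where $T^{*}:A^{*}\lr V^{*}$ is the transpose of $T$; in particular $\rr$ maps $V\subset\widehat{A}^{*}$ onto $T(V)$ by $v\mapsto T(v)$ and maps $A^{*}$ into $V^{*}$.

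Next I would unwind the bracket and the dual action on $\widehat{A}$. For $x+\xi,\,y+\eta\in\widehat{A}$ with $x,y\in A$ and $\xi,\eta\in V^{*}$ we have $[x+\xi,y+\eta]_{\widehat{A}}=[x,y]+\ell^{*}(x)(\eta)+(\ell^{*}-r^{*})(y)(\xi)$, so for $z\in A$ both $\mathcal{L}_{\widehat{A}}(z)$ and $\mathcal{R}_{\widehat{A}}(z)$ respect the decomposition $A\oplus V^{*}$, and hence $\mathcal{L}_{\widehat{A}}^{*}(z)$ and $\mathcal{R}_{\widehat{A}}^{*}(z)$ respect $A^{*}\oplus V$; a short computation with Eq.\eqref{eq:repdual} (under the identification $(V^{*})^{*}=V$) then gives, for $u,v\in V$,
\[
\mathcal{L}_{\widehat{A}}^{*}(T(u))(v)=\ell(T(u))(v),\qquad (\mathcal{L}_{\widehat{A}}^{*}-\mathcal{R}_{\widehat{A}}^{*})(T(v))(u)=r(T(v))(u),
\]
both lying in $V\subset\widehat{A}^{*}$. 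The key step is to test the relative Rota-Baxter identity for $\rr$ on the four bidegree pieces of its pair of arguments in $\{A^{*},V\}$. When both arguments lie in $V$, using $[\rr(u),\rr(v)]_{\widehat{A}}=[T(u),T(v)]$ and the displayed formulas it reduces exactly to
\[
[T(u),T(v)]=T\big(\ell(T(u))(v)+r(T(v))(u)\big),
\]
i.e.\ Eq.\eqref{eq:weakr1}. This already yields the nontrivial implication: if $r$ solves the left Alia Yang-Baxter equation in $\widehat{A}$, then $\rr$ is a relative Rota-Baxter operator of $\widehat{A}$, and restricting its defining identity to arguments in $V$ gives Eq.\eqref{eq:weakr1}, so $T$ is a relative Rota-Baxter operator of $(A,[,])$ associated to $(V,\ell,r)$.

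For the converse, assuming Eq.\eqref{eq:weakr1} I would verify that the other three bidegree pieces of the relative Rota-Baxter identity for $\rr$ hold automatically: when both arguments lie in $A^{*}$ both sides vanish, because the bracket of two elements of $V^{*}$ in $\widehat{A}$ is zero and $\mathcal{L}_{\widehat{A}}^{*}(\xi)$, $\mathcal{R}_{\widehat{A}}^{*}(\xi)$ annihilate $A^{*}$ for $\xi\in V^{*}$; and in each mixed case, after expanding the dual actions with Eq.\eqref{eq:repdual}, the identity collapses to a consequence of Eq.\eqref{eq:weakr1} and the representation identity Eq.\eqref{eq:2} for $(V,\ell,r)$. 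Then $\rr$ is a relative Rota-Baxter operator of $\widehat{A}$ and Lemma \ref{lem:bii} closes the equivalence. I expect the main obstacle to be exactly this bookkeeping in the mixed cases: keeping the sign conventions of Eq.\eqref{eq:repdual} straight across the double dualization $V\hookrightarrow(V^{*})^{*}$ and seeing that the cross terms cancel using precisely Eq.\eqref{eq:2}. As an alternative route bypassing Lemma \ref{lem:bii}, one can expand $Al(r)=0$ directly in $\widehat{A}$: since $r$ is supported in $V^{*}\o A$ and $A\o V^{*}$, only a few of the $2^{3}$ homogeneous components of $Al(r)\in\widehat{A}^{\o 3}$ are nonzero, and the same identities reduce the whole system to Eq.\eqref{eq:weakr1}; this is more computational but parallel.
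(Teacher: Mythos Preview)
The paper does not prove this lemma; it is quoted from \cite{KLY} without proof, so there is no in-paper argument to compare against. Your reduction via Lemma~\ref{lem:bii} is correct and is a clean way to obtain the result: the identification $\widehat{A}^{*}\cong A^{*}\oplus V$ and the formula $\rr(a^{*}+v)=T(v)-T^{*}(a^{*})$ are right, and on the $V\times V$ block the relative Rota-Baxter identity for $\rr$ is literally Eq.~\eqref{eq:weakr1}.

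One small sharpening: in the mixed blocks you do not actually need Eq.~\eqref{eq:2}. A direct check shows that for $v\in V$, $a^{*}\in A^{*}$ both $\mathcal{L}_{\widehat{A}}^{*}(\,\cdot\,)$ and $(\mathcal{L}_{\widehat{A}}^{*}-\mathcal{R}_{\widehat{A}}^{*})(\,\cdot\,)$ send the relevant arguments into $A^{*}$, and after applying $\rr=-T^{*}$ and pairing with a test vector $w\in V$, the $(V,A^{*})$ and $(A^{*},V)$ identities reduce, respectively, to Eq.~\eqref{eq:weakr1} itself and to the difference of Eq.~\eqref{eq:weakr1} with the roles of $v,w$ swapped. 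The representation axiom Eq.~\eqref{eq:2} is only used implicitly, to guarantee that $(V^{*},\ell^{*},\ell^{*}-r^{*})$ is a representation so that $\widehat{A}$ is a left Alia algebra and Lemma~\ref{lem:bii} applies. With that adjustment your argument goes through cleanly; the alternative direct expansion of $Al(r)$ you sketch is the approach more commonly taken in the literature and leads to the same reduction.
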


 \begin{thm}\label{thm:bo} Let $((A, [, ]), N)$ be $\b$-admissible Nijenhuis left Alia algebra with respect to the representation $(V, \ell, r)$, $S: A\lr A$, $\a: V\lr V$ and $T: V\lr A$ linear maps.
 \begin{enumerate}[(1)]
		
 \item\label{it:thm:bo1} $r=T_{\sharp}-\tau(T_{\sharp})$ is an antisymmetric solution of the $(S+\a^*)$-admissible left Alia Yang-Baxter equation in $(A\ltimes_{\ell^{*}, \ell^{*}-r^{*}} V^{*}, N+\b^{*})$ if and only if $T$ is a weak relative Rota-Baxter operator of $(A, [, ])$ associated to $(V, \ell, r)$ and $\a$, and $T\circ \b=S\circ T$.
		
 \item\label{it:thm:bo2} Assume that $((V, \ell, r), \a)$ is a representation of $((A, [, ]), N)$. If $T$ is a relative Rota-Baxter operator of $((A, [, ]), N)$ associated to  $((V, \ell, r), \a)$ and $T\circ \b=S\circ T$, then $r=T_{\sharp}-\tau(T_{\sharp})$ is an antisymmetric solution of the $(S+\a^*)$-admissible left Alia Yang-Baxter equation in $(A\ltimes_{\ell^{*}, \ell^{*}-r^{*}} V^{*}, N+\b^{*})$. In addition, if $((A, [, ]), N)$ is $S$-adjoint-admissible and Eqs.\eqref{eq:59} and \eqref{eq:60} hold, then there is a Nijenhuis left Alia bialgebra $((A\ltimes_{\ell^{*}, \ell^{*}-r^{*}} V^{*}, \D), N+\b^{*}, S+\a^{*})$, where the linear map $\D=\D_{r}$ is defined by Eq.\eqref{eq:comuti} with $r=T_{\sharp}-\tau(T_{\sharp})$.
 \end{enumerate}
 \end{thm}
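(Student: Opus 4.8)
The plan is to bootstrap from results already established: Lemma~\ref{lem:bm} handles the bare left Alia Yang-Baxter equation, a short tensor computation handles the compatibility condition Eq.\eqref{eq:salaeq2}, and Theorem~\ref{thm:bn} together with Proposition~\ref{pro:bg} promotes the outcome of item (1) to a Nijenhuis left Alia bialgebra in item (2). First note that the standing hypothesis that $((A,[,]),N)$ is $\b$-admissible with respect to $(V,\ell,r)$ says exactly, via Lemma~\ref{lem:o}, that $((V^{*},\ell^{*},\ell^{*}-r^{*}),\b^{*})$ is a representation of $((A,[,]),N)$; hence $(A\ltimes_{\ell^{*},\ell^{*}-r^{*}}V^{*},N+\b^{*})$ is a Nijenhuis left Alia algebra by Proposition~\ref{pro:n}, so the notion of an $(S+\a^{*})$-admissible left Alia Yang-Baxter equation in it is meaningful.

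For item (1), fix a basis $\{e_j\}$ of $V$ with dual basis $\{e^j\}\subset V^{*}$, so that $T_{\sharp}=\sum_j e^j\o T(e_j)$ and $r=T_{\sharp}-\tau(T_{\sharp})=\sum_j\big(e^j\o T(e_j)-T(e_j)\o e^j\big)$; in particular $r$ is antisymmetric. By Lemma~\ref{lem:bm}, $r$ satisfies the left Alia Yang-Baxter equation Eq.\eqref{eq:ybq} in $A\ltimes_{\ell^{*},\ell^{*}-r^{*}}V^{*}$ if and only if $T$ is a relative Rota-Baxter operator of $(A,[,])$ associated to $(V,\ell,r)$. It then remains to analyze Eq.\eqref{eq:salaeq2} for the operators $N+\b^{*}$ and $S+\a^{*}$ on $A\oplus V^{*}$, which act as $N,S$ on $A$ and as the transposes $\b^{*},\a^{*}$ on $V^{*}$. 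A direct expansion shows that the $V^{*}\o A$-component of $\big((S+\a^{*})\o\id-\id\o(N+\b^{*})\big)(r)$ equals $\sum_k e^k\o(T\circ\a-N\circ T)(e_k)$ (reindexing via $\langle\a^{*}(e^j),e_k\rangle=\langle e^j,\a(e_k)\rangle$), while its $A\o V^{*}$-component equals $\sum_k(T\circ\b-S\circ T)(e_k)\o e^k$. As $V^{*}\o A$ and $A\o V^{*}$ are complementary summands of $(A\oplus V^{*})^{\o 2}$, Eq.\eqref{eq:salaeq2} holds for $r$ exactly when $N\circ T=T\circ\a$ and $S\circ T=T\circ\b$. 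Combining the two equivalences and reading off Definition~\ref{de:bk} gives item (1).

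For the first claim of item (2): a relative Rota-Baxter operator of $((A,[,]),N)$ associated to $((V,\ell,r),\a)$ is in particular a weak relative Rota-Baxter operator of $((A,[,]),N)$ associated to $(V,\ell,r)$ and $\a$, so when additionally $T\circ\b=S\circ T$, item (1) gives that $r=T_{\sharp}-\tau(T_{\sharp})$ is an antisymmetric solution of the $(S+\a^{*})$-admissible left Alia Yang-Baxter equation in $(A\ltimes_{\ell^{*},\ell^{*}-r^{*}}V^{*},N+\b^{*})$. For the second claim, combine the standing hypothesis Eqs.\eqref{eq:13}-\eqref{eq:14} with the assumptions that $((V,\ell,r),\a)$ is a representation of $((A,[,]),N)$, that $((A,[,]),N)$ is $S$-adjoint-admissible (Eqs.\eqref{eq:15}-\eqref{eq:16}), and that Eqs.\eqref{eq:59}-\eqref{eq:60} hold: then all four conditions of item~\ref{it:thm:bn3} of Theorem~\ref{thm:bn} are in force, so item~\ref{it:thm:bn2} applies and $(A\ltimes_{\ell^{*},\ell^{*}-r^{*}}V^{*},N+\b^{*})$ is a Nijenhuis left Alia algebra on which $S+\a^{*}$ is adjoint-admissible. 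Applying Proposition~\ref{pro:bg} to this Nijenhuis left Alia algebra and the antisymmetric $(S+\a^{*})$-admissible solution $r$ yields the triangular Nijenhuis left Alia bialgebra $((A\ltimes_{\ell^{*},\ell^{*}-r^{*}}V^{*},\D_{r}),N+\b^{*},S+\a^{*})$, with $\D_{r}$ given by Eq.\eqref{eq:comuti}.

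The only genuinely computational step is the identification of the two tensor components in item (1). The point to watch there is the bookkeeping: which of the complementary summands $V^{*}\o A$, $A\o V^{*}$ of $(A\oplus V^{*})^{\o 2}$ each term occupies, and the sign-free convention for the transposes $\a^{*},\b^{*}$ as against the representation duals $\ell^{*},r^{*}$ defined via Eq.\eqref{eq:repdual}. Once that is set up correctly the computation is immediate, and the remainder is just a matter of matching hypotheses to Theorem~\ref{thm:bn} and Proposition~\ref{pro:bg}.
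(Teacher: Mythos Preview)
Your proof is correct and follows essentially the same approach as the paper: for item~(1) you fix a basis, invoke Lemma~\ref{lem:bm} for the Yang--Baxter part, and reduce Eq.~\eqref{eq:salaeq2} to the two intertwining conditions $N\circ T=T\circ\a$ and $S\circ T=T\circ\b$ by splitting into the $V^{*}\otimes A$ and $A\otimes V^{*}$ components; for item~(2) you combine item~(1) with Theorem~\ref{thm:bn} and Proposition~\ref{pro:bg}, exactly as the paper does (though the paper's ``It follows Item~(1) and Theorem~\ref{thm:bn}'' leaves the invocation of Proposition~\ref{pro:bg} implicit). Your version is in fact more careful than the paper's, in that you explicitly justify why the ambient semi-direct product is a Nijenhuis left Alia algebra and flag the distinction between the sign-free transposes $\a^{*},\b^{*}$ and the representation duals of Eq.~\eqref{eq:repdual}.
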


 \begin{proof} \ref{it:thm:bo1} Let $\{e_{1}, e_{2}, ..., e_{n}\}$ be a basis of $V$ and  $\{e_{1}^{*}, e_{2}^{*},..., e_{n}^{*}\}$ be the dual basis. Then
 \begin{eqnarray*}
 &&\hspace{-6mm}T_{\sharp}=\sum_{i} e_{i}^{*}\o T(e_{i})\in  V^{*}\o A\subset (A\oplus V^{*})\o  (A\oplus V^{*}),\\
 &&\hspace{-6mm}r=T_{\sharp}-\tau(T_{\sharp})=\sum_{i} (e_{i}^{*}\o T(e_{i})-T(e_{i})\o e_{i}^{*}).
 \end{eqnarray*}
 Note that
 \begin{eqnarray*}
 &&\hspace{-6mm}((S+\a^{*})\o \id)(r)=\sum_{i} (\a^{*}(e_{i}^{*})\o T(e_{i})-S(T(e_{i}))\o e_{i}^{*}),\\
 &&\hspace{-6mm}(\id\o(N+\b^{*}))(r)=\sum_{i} (e_{i}^{*}\o N(T(e_{i}))-T(e_{i})\o \b^{*}(e_{i}^{*})).
 \end{eqnarray*}
 Further,
 \begin{eqnarray*}
 &&\hspace{-6mm}\sum_{i} T(e_{i})\o \b^{*}(e_{i}^{*})=\sum_{i} T(\b(e_{i}))\o e_{i}^{*},\\
 &&\hspace{-6mm}\sum_{i} \a^{*}(e_{i}^{*})\o T(e_{i})=\sum_{i} e_{i}^{*}\o T(\a(e_{i})).
 \end{eqnarray*}
 Therefore $((S+\a^{*})\o \id)(r)=(\id\o(N+\b^{*}))(r)$ if and only if $T\circ \b=S\circ T$ and $N\circ T=T\circ \a$. Hence the conclusion follows by Lemma \ref{lem:bm}.
	
 \ref{it:thm:bo2} It follows Item \ref{it:thm:bo1} and Theorem \ref{thm:bn}.
 \end{proof}

 \section{Application to special left Alia algebras}\label{se:special} An associative D-bialgebra introduced by Zhelyabin in \cite{Zhe} (also called balanced infinitesimal bialgebra by Aguiar in \cite{Ag00b} or antisymmetric infinitesimal bialgebra by Bai in \cite{Bai1}) is a triple $(A, \cdot, \d)$, where $(A, \cdot)$ is an associative algebra, $(A, \d)$ is a coassociative coalgebra such that two compatible conditions hold. The following is a special case.

 \begin{defi} A {\bf commutative and cocommutative associative D-bialgebra} is a triple $(A, \cdot, \d)$, where the pair $(A, \cdot)$ is a commutative associative algebra, and the pair $(A, \d)$ is a cocommutative coassociative coalgebra, such that, for all $a, b\in A$,
 \begin{equation}\label{eq:asi}
 \d(a \cdot b)=a_{[1]}\cdot b\o a_{[2]}+b_{[1]}\o a\cdot b_{[2]},
 \end{equation}
 where we write $\d(a)=a_{[1]}\o a_{[2]}, \forall~a\in A$.
 \end{defi}

 Now we consider the case of {\bf special left Alia bialgebra}.

 \begin{thm}\label{thm:ajj} Let $(A, \cdot, \d)$ be a commutative and cocommutative associative D-bialgebra with linear maps $f, g, F, G: A\lr A$. 
 Then there is a left Alia bialgebra $(A, [,]_{(f, g)}, \D_{(F, G)})$, where $[,]_{(f, g)}$ and $\D_{(F, G)}$ are given by Eqs.(\ref{eq:special}) and (\ref{eq:cosplaa}) respectively, if and only if for all $x, y\in A$, {\small
 \begin{eqnarray}
 &&\hspace{-6mm}f(y)_{[1]}\o F(x\cdot f(y)_{[2]})-F(x\cdot f(y)_{[2]})\o f(y)_{[1]}+F(x_{[2]})\o g(x_{[1]}\cdot y)\nonumber\\
 &&\hspace{-6mm}-g(x_{[1]}\cdot y)\o F(x_{[2]})+G(x)_{[2]}\o G(x)_{[1]}\cdot f(y)-G(x)_{[1]}\cdot f(y)\o G(x)_{[2]}\nonumber\\
 &&\hspace{-6mm}+G(x)_{[2]}\o g(G(x)_{[1]}\cdot y)-g(G(x)_{[1]}\cdot y)\o G(x)_{[2]}+F(y\cdot f(x)_{[2]})\o f(x)_{[1]}\label{eq:pro:ajj1}\\
 &&\hspace{-6mm}-f(x)_{[1]}\o F(y\cdot f(x)_{[2]})-F(y_{[2]})\o g(y_{[1]}\cdot x)+g(y_{[1]}\cdot x)\o F(y_{[2]})\nonumber\\
 &&\hspace{-6mm}-G(y)_{[2]}\o G(y)_{[1]}\cdot f(x)+G(y)_{[1]}\cdot f(x)\o G(y)_{[2]}-G(y)_{[2]}\o g(G(y)_{[1]}\cdot x)\nonumber\\
 &&\hspace{-6mm}+g(G(y)_{[1]}\cdot x)\o G(y)_{[2]}=0.\nonumber
 \end{eqnarray}
 } 
 In this case, $(A, [, ]_{(f, g)}, \D_{(F, G)})$ is called a {\bf special left Alia bialgebra with respect to $(f, g, F, G)$}.
 \end{thm}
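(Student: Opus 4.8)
The plan is to notice that the left Alia algebra part and the left Alia coalgebra part of the asserted structure come for free, so that only the bialgebra compatibility \eqref{eq:labia} has to be checked, and then to verify it by a direct expansion. Indeed, by \cite[Theorem 6.2]{Dz} the pair $(A,[,]_{(f,g)})$ is always a left Alia algebra, and dually $(A,\D_{(F,G)})$ is always a left Alia coalgebra; hence $(A,[,]_{(f,g)},\D_{(F,G)})$ is a left Alia bialgebra if and only if \eqref{eq:labia} holds with $[,]=[,]_{(f,g)}$ and $\D=\D_{(F,G)}$. Since $\cdot$ is commutative we have $g(x\cdot y)=g(y\cdot x)$, so $[x,y]_{(f,g)}-[y,x]_{(f,g)}=x\cdot f(y)-y\cdot f(x)$, which already simplifies the left-hand side of \eqref{eq:labia}.

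I would then expand both sides of \eqref{eq:labia} \emph{before} applying $\tau-\id^{\o 2}$. Using \eqref{eq:cosplaa} together with the D-bialgebra identity \eqref{eq:asi} in the form $\d(x\cdot f(y))=x_{[1]}\cdot f(y)\o x_{[2]}+f(y)_{[1]}\o x\cdot f(y)_{[2]}$, one gets
\begin{align*}
\D_{(F,G)}(x\cdot f(y))={}&x_{[1]}\cdot f(y)\o F(x_{[2]})+f(y)_{[1]}\o F(x\cdot f(y)_{[2]})\\
&+G(x\cdot f(y))_{[1]}\o G(x\cdot f(y))_{[2]},
\end{align*}
and likewise with $x,y$ interchanged. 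On the other side, since $\mathcal{R}(y)z=[z,y]_{(f,g)}=z\cdot f(y)+g(z\cdot y)$, the term $(\mathcal{R}(y)\o\id)\D_{(F,G)}(x)$ expands via \eqref{eq:cosplaa} into the four summands $x_{[1]}\cdot f(y)\o F(x_{[2]})$, $g(x_{[1]}\cdot y)\o F(x_{[2]})$, $G(x)_{[1]}\cdot f(y)\o G(x)_{[2]}$, $g(G(x)_{[1]}\cdot y)\o G(x)_{[2]}$, and symmetrically for $(\mathcal{R}(x)\o\id)\D_{(F,G)}(y)$. Forming the difference of the two sides of \eqref{eq:labia}, the summands $x_{[1]}\cdot f(y)\o F(x_{[2]})$ and $y_{[1]}\cdot f(x)\o F(y_{[2]})$ cancel.

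The key point is that applying $\tau-\id^{\o 2}$ to what remains annihilates the two ``pure $G$'' terms $G(x\cdot f(y))_{[1]}\o G(x\cdot f(y))_{[2]}$ and $G(y\cdot f(x))_{[1]}\o G(y\cdot f(x))_{[2]}$, because cocommutativity of $\d$ gives $\tau\circ\d=\d$, whence $(\tau-\id^{\o 2})\d(a)=0$ for every $a\in A$; this is precisely why \eqref{eq:pro:ajj1} contains no term with $G$ applied to a product. Expanding $\tau-\id^{\o 2}$ on the eight remaining summands then reproduces, up to an overall sign, exactly the sixteen terms of \eqref{eq:pro:ajj1}; a term-by-term match shows that \eqref{eq:labia} is equivalent to \eqref{eq:pro:ajj1}, so both implications follow simultaneously.

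I do not anticipate a real obstacle here: the computation is essentially bookkeeping. The points that need care are tracking signs and Sweedler legs under $\tau-\id^{\o 2}$, and observing that it is cocommutativity of $\d$ (not merely coassociativity) that eliminates the $\d(G(-\cdot-))$ contributions — without this remark one would expect \eqref{eq:pro:ajj1} to carry extra terms. It is also worth recording that this equivalence requires neither $f\circ g=g\circ f$ nor $F\circ G=G\circ F$; those hypotheses enter only afterwards, when one additionally wants $f$ and $F$ to be Nijenhuis operators on $[,]_{(f,g)}$ and $\D_{(F,G)}$ respectively.
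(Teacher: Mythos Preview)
Your proposal is correct and follows essentially the same route as the paper's own proof: both reduce the claim to the single compatibility \eqref{eq:labia}, use $[x,y]_{(f,g)}-[y,x]_{(f,g)}=x\cdot f(y)-y\cdot f(x)$, expand via \eqref{eq:cosplaa} and \eqref{eq:asi}, cancel the $x_{[1]}\cdot f(y)\o F(x_{[2]})$ type terms, and kill the $\d(G(\,\cdot\,))$ contributions by cocommutativity, arriving at the sixteen terms of \eqref{eq:pro:ajj1}. Your explicit remark that it is cocommutativity (not mere coassociativity) that removes the $G$-on-a-product terms, and that no commutation hypotheses on $f,g,F,G$ are needed here, is accurate and in fact slightly more informative than the paper's write-up.
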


 \begin{proof} It is easy to see that $(A, [, ]_{(f, g)})$ is a special left Alia algebra with respect to $(A, \cdot, f, g)$ and $(A, \D_{(F, G)})$ is a special left Alia coalgebra with respect to $(A, \d, F, G)$. Now we prove that $(A, [, ]_{(f, g)}, \D_{(F, G)})$ is a left Alia bialgebra. In fact, for all $x, y\in A$, we have
 {\small\begin{eqnarray*}
 &&\hspace{-13mm}(\tau-\id^{\o 2})((\mathcal{R}(y)\o id )\D_{(F, G)}(x)-(\mathcal{R}(x)\o \id )\D_{(F, G)}(y))-(\tau-\id^{\o 2})(\D_{(F, G)}([x, y]_{(f, g)}-[y, x]_{(f, g)}))\\ 
 &\stackrel{\eqref{eq:cosplaa}\eqref{eq:special}}{=}&\hspace{-3mm}(\tau-\id^{\o 2})(x_{[1]}\cdot f(y)\o F(x_{[2]})+g(x_{[1]}\cdot y)\o F(x_{[2]})+G(x)_{1}\cdot f(y)\o G(x)_{[2]}\\
 &&\hspace{-3mm}+g(G(x)_{[1]}\cdot y)\o G(x)_{[2]}-y_{[1]}\cdot f(x)\o F(y_{[2]})-g(y_{[1]}\cdot x)\o F(y_{[2]})-G(y)_{[1]}\cdot f(x)\\
 &&\hspace{-3mm}\o G(y)_{[2]}-g(G(y)_{[1]}\cdot x)\o G(y)_{[2]})- (\tau-\id^{\o 2})((x\cdot f(y))_{[1]}\o F((x\cdot f(y))_{[2]})\\
 &&\hspace{-3mm}+G(y\cdot f(x))_{[1]}\o G(y\cdot f(x))_{[2]}-(y\cdot f(x))_{[1]}\o F((y\cdot f(x))_{[2]})-G(y\cdot f(x))_{[1]}\o G(y\cdot f(x))_{[2]})\\
 &\stackrel{\eqref{eq:asi}}{=}&\hspace{-3mm}\underline{F(x_{[2]})\o x_{[1]}\cdot f(y)}+F(x_{[2]})\o g(x_{[1]}\cdot y)+G(x)_{[2]}\o G(x)_{[1]}\cdot f(y)+G(x)_{[2]}\o g(G(x)_{[1]}\cdot y)\\
 &&\hspace{-3mm}\underline{-F(y_{[2]})\o y_{[1]}\cdot f(x)}-F(y_{[2]})\o g(y_{[1]}\cdot x)-G(y)_{[2]}\o G(y)_{[1]}\cdot f(x)- G(y)_{[2]}\o g(G(y)_{[1]}\cdot x)\\
 &&\hspace{-3mm}\underline{-x_{[1]}\cdot f(y)\o F(x_{[2]})}-g(x_{[1]}\cdot y)\o F(x_{[2]})-G(x)_{[1]}\cdot f(y)\o G(x)_{[2]}-g(G(x)_{[1]}\cdot y)\o G(x)_{[2]}\\
 &&\hspace{-3mm}\underline{+y_{[1]}\cdot f(x)\o F(y_{[2]})}+g(y_{[1]}\cdot x)\o F(y_{[2]})+G(y)_{[1]}\cdot f(x)\o G(y)_{[2]}+g(G(y)_{[1]}\cdot x)\o G(y)_{[2]}\\
 &&\hspace{-3mm} \underline{-F(x_{[2]})\o x_{[1]}\cdot f(y)}-F(x\cdot f(y)_{[2]})\o f(y)_{[1]}\underline{+x_{[1]}\cdot f(y)\o F(x_{[2]})}+f(y)_{[1]}\o F(x\cdot f(y)_{[2]})\\
 &&\hspace{-3mm}\underline{+F(y_{[2]})\o y_{[1]}\cdot f(x)}+F(y\cdot f(x)_{[2]})\o f(x)_{[1]}\underline{-y_{[1]}\cdot f(x)\o F(y_{[2]})}-f(x)_{[1]}\o F(y\cdot f(x)_{[2]})\\
 &=&\hspace{-3mm}-F(x\cdot f(y)_{[2]})\o f(y)_{[1]}+f(y)_{[1]}\o F(x\cdot f(y)_{[2]})+F(x_{[2]})\o g(x_{[1]}\cdot y)-g(x_{[1]}\cdot y)\o F(x_{[2]})\\
 &&\hspace{-3mm}+G(x)_{[2]}\o G(x)_{[1]}\cdot f(y)-G(x)_{[1]}\cdot f(y)\o G(x)_{[2]}+G(x)_{[2]}\o g(G(x)_{[1]}\cdot y)- g(G(x)_{[1]}\cdot y)\\
 &&\hspace{-3mm}\o G(x)_{[2]}+F(y\cdot f(x)_{[2]})\o f(x)_{[1]}-f(x)_{[1]}\o F(y\cdot f(x)_{[2]})-F(y_{[2]})\o g(y_{[1]}\cdot x)+g(y_{[1]}\cdot x)\\
 &&\hspace{-3mm}\o F(y_{[2]})-G(y)_{[2]}\o G(y)_{[1]}\cdot f(x)+G(y)_{[1]}\cdot f(x)\o G(y)_{[2]}-G(y)_{[2]}\o g(G(y)_{[1]}\cdot x)\\
 &&\hspace{-3mm}+g(G(y)_{[1]}\cdot x)\o G(y)_{[2]}.
 \end{eqnarray*}		
 }
 Hence, $(A, [,]_{(f, g)}, \D_{(F, G)})$ is a left Alia bialgebra if and only if Eq.\eqref{eq:pro:ajj1} holds.
 \end{proof}

 \begin{cor}\label{cor:ajjh} Let $(A, \cdot, \d)$ be a commutative cocommutative associative D-bialgebra with linear maps $f, g: A\lr A$. Then there is a special left Alia bialgebra $(A, [, ]_{(f, g)}, \D_{(g, f)})$ with respect to $(f, g, g, f)$. 
 \end{cor}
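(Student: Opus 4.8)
\emph{Proof proposal.} The plan is to invoke Theorem~\ref{thm:ajj} with the special choice $F = g$ and $G = f$: this produces the candidate $(A, [,]_{(f,g)}, \D_{(g,f)})$, and everything comes down to checking that the compatibility condition Eq.\eqref{eq:pro:ajj1} is then satisfied automatically, using only commutativity of $\cdot$, cocommutativity of $\d$, and the axiom Eq.\eqref{eq:asi}. Substituting $F = g$, $G = f$ into the sixteen-term left-hand side of Eq.\eqref{eq:pro:ajj1}, I would first record two structural observations. Terms $9$--$16$ are, as a sum, exactly the negative of terms $1$--$8$ with $x$ and $y$ interchanged, so the whole expression has the shape $P(x,y) - P(y,x)$, where $P$ collects terms $1$--$8$. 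Second, after the substitution the eight summands of $P(x,y)$ occur in flip-pairs (for instance, term~$2$ is $-\tau$ applied to term~$1$, and likewise for the pairs $(3,4)$, $(5,6)$, $(7,8)$), so $P(x,y) = (\id^{\o 2} - \tau)\big(Q(x,y)\big)$ with
\begin{align*}
Q(x,y) ={}& f(y)_{[1]}\o g(x\cdot f(y)_{[2]}) + g(x_{[2]})\o g(x_{[1]}\cdot y) \\
& + f(x)_{[2]}\o f(x)_{[1]}\cdot f(y) + f(x)_{[2]}\o g(f(x)_{[1]}\cdot y).
\end{align*}
Thus the whole condition reduces to proving the single identity $(\id^{\o 2} - \tau)\big(Q(x,y) - Q(y,x)\big) = 0$.

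Next I would kill two of the four summands of $Q$ outright. By cocommutativity of $\d$ (so that $\d(f(y))$ is $\tau$-invariant) together with commutativity of $\cdot$, the first summand of $Q(x,y)$ equals $f(y)_{[2]}\o g(f(y)_{[1]}\cdot x)$, which is precisely the fourth summand of $Q(y,x)$; symmetrically the fourth summand of $Q(x,y)$ equals the first summand of $Q(y,x)$. Hence these contributions cancel pairwise in $Q(x,y) - Q(y,x)$, and it remains only to show that $(\id^{\o 2}-\tau)$ annihilates
\[
\big(g(x_{[2]})\o g(x_{[1]}\cdot y) + f(x)_{[2]}\o f(x)_{[1]}\cdot f(y)\big) - (x \leftrightarrow y).
\]

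The core of the proof is then one small lemma: for every linear map $h : A \lr A$ and all $u, w \in A$,
\[
(\id^{\o 2} - \tau)\big(h(u_{[2]})\o h(u_{[1]}\cdot w)\big) = (\id^{\o 2} - \tau)\big(h(w_{[2]})\o h(w_{[1]}\cdot u)\big).
\]
Granting this, applying it with $h = \id$ and $(u,w) = (f(x), f(y))$ shows that $(\id^{\o 2}-\tau)$ annihilates the $f$-part of the displayed difference, while applying it with $h = g$ and $(u,w) = (x,y)$ does the same for the $g\o g$-part; since interchanging $x$ and $y$ interchanges $f(x)$ and $f(y)$, the displayed difference lies in $\ker(\id^{\o 2}-\tau)$, and the corollary follows. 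To prove the lemma I would apply $h\o h$ to the equality $\d(u\cdot w) = \d(w\cdot u)$ (commutativity of $\cdot$), expand both sides by Eq.\eqref{eq:asi}, and then use cocommutativity of $\d$ together with commutativity of $\cdot$ to identify $\tau\big(h(w_{[1]})\o h(u\cdot w_{[2]})\big)$ with $h(w_{[1]}\cdot u)\o h(w_{[2]})$ (and the symmetric statement with $u,w$ swapped); feeding these relations back into the expanded identity rearranges it into the asserted equality.

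I expect the only genuine obstacle to be organizational rather than conceptual: correctly pairing the sixteen Sweedler-indexed terms, keeping careful track of signs and of which tensor factor carries which coalgebra leg, and noticing the flip-pairing together with the two cross-cancellations that collapse the entire condition onto the two-term lemma. Once that reduction is in hand, the lemma — and with it the corollary — is a short manipulation of the three D-bialgebra axioms.
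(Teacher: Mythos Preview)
Your proposal is correct and follows essentially the same route as the paper. The paper writes out the sixteen terms, first cancels terms $1,2,15,16$ and $7,8,9,10$ pairwise via cocommutativity and commutativity (your cross-cancellation of the first and fourth summands of $Q$), and then disposes of the remaining eight terms using the identity $x_{[1]}\cdot y\o x_{[2]}+y_{[1]}\o x\cdot y_{[2]}=y_{[1]}\cdot x\o y_{[2]}+x_{[1]}\o y\cdot x_{[2]}$ coming from $\d(x\cdot y)=\d(y\cdot x)$ --- which is exactly your lemma (applied with $h=g$ to $(x,y)$ and with $h=\id$ to $(f(x),f(y))$). Your $P(x,y)-P(y,x)$ and $(\id^{\o 2}-\tau)Q$ packaging is a cleaner way to organize the same cancellations.
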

 
 \begin{proof} Since $(A, \cdot, \d)$ is a commutative cocommutative associative D-bialgebra, we have $\d(x\cdot y)=\d(y\cdot x)$, $\forall~x, y\in A$, that is,
 \begin{eqnarray}
 &x_{[1]}\cdot y\o x_{[2]}+y_{[1]}\o x\cdot y_{[2]}=y_{[1]}\cdot x\o y_{[2]}+x_{[1]}\o y\cdot x_{[2]}.&\label{eq:asi1}
 \end{eqnarray}
 Next we check that Eq.\eqref{eq:pro:ajj1} holds as follows. 
 {\small \begin{eqnarray*}
 \hbox{LHS of Eq.}\eqref{eq:pro:ajj1}
 \hspace{-3mm}&=&\hspace{-3mm}\underline{f(y)_{[1]}\o g(x\cdot f(y)_{[2]})}\underline{-g(x\cdot f(y)_{[2]})\o f(y)_{[1]}}+g(x_{[2]})\o g(x_{[1]}\cdot y)\\
 &&\hspace{-3mm}-g(x_{[1]}\cdot y)\o g(x_{[2]})+f(x)_{[2]}\o f(x)_{[1]}\cdot f(y)-f(x)_{[1]}\cdot f(y)\o f(x)_{[2]}\\
 &&\hspace{-3mm}\underbrace{+f(x)_{[2]}\o g(f(x)_{[1]}\cdot y)}\underbrace{-g(f(x)_{[1]}\cdot y)\o f(x)_{[2]}}+\underbrace{g(y\cdot f(x)_{[2]})\o f(x)_{[1]}}\\
 &&\hspace{-3mm}\underbrace{-f(x)_{[1]}\o g(y\cdot f(x)_{[2]})}-g(y_{[2]})\o g(y_{[1]}\cdot x)+g(y_{[1]}\cdot x)\o g(y_{[2]})\\
 &&\hspace{-3mm}-f(y)_{[2]}\o f(y)_{[1]}\cdot f(x)+f(y)_{[1]}\cdot f(x)\o f(y)_{[2]}\underline{-f(y)_{[2]}\o g(f(y)_{[1]}\cdot x)}\\
 &&\hspace{-3mm}\underline{+g(f(y)_{[1]}\cdot x)\o f(y)_{[2]}}\\
 &=&\hspace{-3mm}\underline{g(x_{[2]})\o g(x_{[1]}\cdot y)-g(x_{[1]}\cdot y)\o g(x_{[2]})}\\
 &&\hspace{-3mm}\underbrace{+f(x)_{[2]}\o f(x)_{[1]}\cdot f(y)-f(x)_{[1]}\cdot f(y)\o f(x)_{[2]}}\\
 &&\hspace{-3mm}\underline{-g(y_{[2]})\o g(y_{[1]}\cdot x)+g(y_{[1]}\cdot x)\o g(y_{[2]})}\\
 &&\hspace{-3mm}\underbrace{-f(y)_{[2]}\o f(y)_{[1]}\cdot f(x)+f(y)_{[1]}\cdot f(x)\o f(y)_{[2]}}\\
 &\stackrel{\eqref{eq:asi1}}{=}&0,
 \end{eqnarray*}
 }	
 as desired.
\end{proof}

 \begin{thm}\label{thm:q} Let $(A, \cdot)$ be a commutative associative algebra with linear maps $f, g: A\lr A$.  
 If $((A, \cdot), f)$ is a Nijenhuis associative algebra and $f\circ g=g\circ f$, then $((A, [, ]_{(f, g)}), f)$ is a Nijenhuis special left Alia algebra.
 \end{thm}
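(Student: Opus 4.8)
The plan is straightforward. By \cite[Theorem 6.2]{Dz} the pair $(A,[,]_{(f,g)})$ is already a left Alia algebra, so the only thing left to verify is that $f$ satisfies the Nijenhuis identity \eqref{eq:7} for the bracket $[,]_{(f,g)}$, i.e.
\[
[f(x),f(y)]_{(f,g)}+f^{2}([x,y]_{(f,g)})=f\big([f(x),y]_{(f,g)}+[x,f(y)]_{(f,g)}\big),\qquad\forall\,x,y\in A .
\]
First I would expand both sides using the defining formula \eqref{eq:special}, $[u,v]_{(f,g)}=u\cdot f(v)+g(u\cdot v)$. Each side then splits into four summands, and I would sort them into those that carry no outer application of $g$ (the ``multiplicative part'') and those of the form $g(\,\cdot\,)$ (the ``$g$-part''). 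The commutation hypothesis $f\circ g=g\circ f$, which also gives $f^{2}\circ g=g\circ f^{2}$, is exactly what allows one to pull $f$ and $f^{2}$ through $g$, so that each $g$-term really is $g$ applied to an expression built only from $\cdot$ and $f$.

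For the multiplicative part, the left-hand side contributes $f(x)\cdot f^{2}(y)+f^{2}(x\cdot f(y))$ while the right-hand side contributes $f(f(x)\cdot f(y))+f(x\cdot f^{2}(y))$. These coincide: they are precisely the two sides of the Nijenhuis identity \eqref{eq:7} for the associative product $\cdot$ evaluated at the pair $(x,f(y))$, namely $f(x)\cdot f(f(y))+f^{2}(x\cdot f(y))=f(f(x)\cdot f(y))+f(x\cdot f(f(y)))$.

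For the $g$-part, the left-hand side contributes $g\big(f(x)\cdot f(y)+f^{2}(x\cdot y)\big)$ and the right-hand side contributes $g\big(f(f(x)\cdot y)+f(x\cdot f(y))\big)$; here I have used $f^{2}\circ g=g\circ f^{2}$ and $f\circ g=g\circ f$ to factor out $g$. These agree because $g$ is linear and, applied to the Nijenhuis identity \eqref{eq:7} for $\cdot$ at the pair $(x,y)$, turns $f(x)\cdot f(y)+f^{2}(x\cdot y)=f(f(x)\cdot y)+f(x\cdot f(y))$ into exactly the required equality. Adding the multiplicative part and the $g$-part yields the Nijenhuis identity for $[,]_{(f,g)}$, so $((A,[,]_{(f,g)}),f)$ is a Nijenhuis special left Alia algebra.

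I do not expect a genuine obstacle here: the only point requiring care is the bookkeeping of the eight expanded terms and the recognition that exactly two instances of the associative Nijenhuis identity are used — once at $(x,f(y))$ for the multiplicative part, and once at $(x,y)$ followed by an application of $g$ for the $g$-part. Note that commutativity of $\cdot$ plays no role in this step; it enters only through the already-cited left Alia identity. The dual assertion, Proposition \ref{pro:ahhh}, will follow by the same splitting applied to the comultiplication \eqref{eq:cosplaa}, using $F\circ G=G\circ F$ in place of $f\circ g=g\circ f$.
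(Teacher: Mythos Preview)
Your proposal is correct and is essentially the same computation as the paper's proof: both expand the Nijenhuis identity for $[,]_{(f,g)}$ via \eqref{eq:special}, use $f\circ g=g\circ f$ to commute $f$ past $g$, and reduce to two instances of the associative Nijenhuis identity for $\cdot$. Your explicit splitting into the ``multiplicative part'' (the identity at $(x,f(y))$) and the ``$g$-part'' (the identity at $(x,y)$ under $g$) is a cleaner organization of the eight terms than the paper's bare expansion, but the argument is the same.
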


 \begin{proof} For any $x, y\in A$, we have
 \begin{eqnarray*}
 &&\hspace{-15mm}[f(x), f(y)]_{(f, g)}+f^{2}([x, y]_{(f, g)})-f([f(x), y]_{(f, g)})-f([x, f(y)]_{(f, g)})\\
 &\stackrel{\eqref{eq:special}}{=}&\hspace{-3mm}f(x)\cdot f(f(y))+g(f(x)\cdot f(y))+f^{2}(x\cdot f(y))+f^{2}(g(x\cdot y))\\
 &&\hspace{-3mm}-f(f(x)\cdot f(y))-f(g(f(x)\cdot y))-f(x\cdot f(f(y)))-f(g(x\cdot f(y)))\\ &=&\hspace{-3mm}0,
 \end{eqnarray*}
 finishing the proof.
 \end{proof}
 
 Similarly, we have

 \begin{cor} \label{cor:q} Let $(A, \cdot)$ be a commutative associative algebra with linear maps $f, g: A\lr A$.  If $((A, \cdot), g)$ is a Nijenhuis associative algebra and $f\circ g=g\circ f$, then $((A, [, ]_{(f, g)}), g)$ is a Nijenhuis special left Alia algebra.
 \end{cor}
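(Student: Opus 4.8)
The plan is to mimic the proof of Theorem~\ref{thm:q}, keeping in mind that the operator under consideration, $g$, now occupies the outer ``$g(x\cdot y)$'' slot of the bracket \eqref{eq:special} rather than the inner ``$x\cdot f(y)$'' slot. Since $(A,\cdot)$ is commutative associative, \cite[Theorem 6.2]{Dz} already guarantees that $(A,[,]_{(f,g)})$ is a left Alia algebra, so the only thing to check is that $g$ satisfies the Nijenhuis condition \eqref{eq:7} with respect to $[,]_{(f,g)}$; commutativity of $\cdot$ is thus used only through Dzhumadil'daev's result, the verification below needing only bilinearity and associativity of $\cdot$ together with $f\circ g=g\circ f$ and the Nijenhuis identity for $g$ on $(A,\cdot)$.

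First I would expand the defect
\[
[g(x),g(y)]_{(f,g)}+g^{2}([x,y]_{(f,g)})-g([g(x),y]_{(f,g)})-g([x,g(y)]_{(f,g)})
\]
via \eqref{eq:special}, producing eight terms: four arising from the ``$u\cdot f(v)$'' part of each bracket and four from the ``$g(u\cdot v)$'' part. In the first group the expression $f(g(y))$ occurs, and here I would use $f\circ g=g\circ f$ to rewrite it as $g(f(y))$. After this the first group becomes
\[
g(x)\cdot g(f(y))+g^{2}(x\cdot f(y))-g(g(x)\cdot f(y))-g(x\cdot g(f(y))),
\]
which is exactly the left-hand side of the Nijenhuis identity \eqref{eq:7} for $g$ on $(A,\cdot)$ evaluated at the pair $\bigl(x,\,f(y)\bigr)$, hence equal to $0$.

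For the second group I would pull a single $g$ out of all four terms, using only linearity, to obtain
\[
g\bigl(g(x)\cdot g(y)+g^{2}(x\cdot y)-g(g(x)\cdot y)-g(x\cdot g(y))\bigr),
\]
whose argument is once more the Nijenhuis identity for $g$ on $(A,\cdot)$, this time at $(x,y)$; thus the second group equals $g(0)=0$. Summing the two contributions shows the defect vanishes, so $g$ is a Nijenhuis operator on $(A,[,]_{(f,g)})$ and the proof is complete. I expect no genuine obstacle here: the argument is a short expansion, and the only delicate points are the bookkeeping of the eight terms and the placement of the single use of $f\circ g=g\circ f$ that aligns the $f$-terms with the shape of the Nijenhuis identity for $g$.
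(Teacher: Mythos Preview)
Your proof is correct and is exactly the computation the paper intends by its ``Similarly, we have'': expand the Nijenhuis defect for $g$ via \eqref{eq:special}, use $f\circ g=g\circ f$ once to turn $f(g(y))$ into $g(f(y))$, and recognize the two resulting four-term blocks as instances of the Nijenhuis identity for $g$ on $(A,\cdot)$ at $(x,f(y))$ and (after factoring out one $g$) at $(x,y)$. This mirrors the proof of Theorem~\ref{thm:q} with the roles of the inner and outer slots of \eqref{eq:special} interchanged.
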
 

 Dual to \cite[Theorem 6.2]{Dz} and Theorem \ref{thm:q}, one has
 \begin{pro}\label{pro:ahhh} Let $(A, \d)$ be a cocommutative coassociative coalgebra with linear maps $F, G: A\lr A$. Then there is a special left Alia coalgebra $(A, \D_{(F, G)})$ with respect to $(A, \d, F, G)$. If $((A, \d), F)$ is a Nijenhuis coassociative coalgebra and $F\circ G=G\circ F$, then $((A, \D_{(F, G)}),  F)$ is a Nijenhuis special left Alia coalgebra.
 \end{pro}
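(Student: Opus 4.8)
For the final statement (Proposition~\ref{pro:ahhh}), the argument splits into two claims, both of which I would obtain by dualizing the algebra case.

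\textbf{First claim: $(A,\D_{(F,G)})$ is a left Alia coalgebra.} Here I would observe that the left Alia coidentity \eqref{eq:cola} is the linear dual of the symmetric Jacobi identity \eqref{eq:syjacobi}, and that the comultiplication \eqref{eq:cosplaa} is the dual of the multiplication \eqref{eq:special}; thus this is precisely the ``co-version'' of \cite[Theorem 6.2]{Dz}. Concretely, I would substitute $\D_{(F,G)}(x)=x_{[1]}\o F(x_{[2]})+G(x)_{[1]}\o G(x)_{[2]}$ into both sides of \eqref{eq:cola}, expand the iterated coproducts using coassociativity of $\d$, and match the resulting terms using cocommutativity of $\d$, mirroring Dzhumadil'daev's computation in the algebra setting. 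No hypothesis on $F,G$ beyond linearity is needed for this claim.

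\textbf{Second claim: $F$ is a Nijenhuis operator for $\D_{(F,G)}$, i.e. \eqref{eq:nlaca} holds with $S=F$ and $\D=\D_{(F,G)}$.} The key observation is that the ``Nijenhuis defect''
\[
\Phi(\D)(x):=(F\o F)\D(x)+\D(F^{2}(x))-(F\o\id)\D(F(x))-(\id\o F)\D(F(x))
\]
is linear in $\D$. Writing $\D_{(F,G)}=(\id\o F)\ci\d+\d\ci G$, we get $\Phi(\D_{(F,G)})=\Phi((\id\o F)\ci\d)+\Phi(\d\ci G)$, and I would show that each summand vanishes. For the first summand, a direct substitution gives
\[
\Phi((\id\o F)\ci\d)(x)=(F\o F^{2})\d(x)+(\id\o F)\d(F^{2}(x))-(F\o F)\d(F(x))-(\id\o F^{2})\d(F(x)),
\]
which is exactly $(\id\o F)$ applied to the Nijenhuis coassociative-coalgebra identity $(F\o F)\d+\d\ci F^{2}=(F\o\id)\ci\d\ci F+(\id\o F)\ci\d\ci F$ for $F$ on $(A,\d)$, hence zero. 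For the second summand, the relation $F\ci G=G\ci F$ (whence $G\ci F^{2}=F^{2}\ci G$) turns $\Phi(\d\ci G)(x)$ into
\[
(F\o F)\d(G(x))+\d(F^{2}(G(x)))-(F\o\id)\d(F(G(x)))-(\id\o F)\d(F(G(x))),
\]
which is that same identity evaluated at $G(x)$, hence also zero. Adding the two, $\Phi(\D_{(F,G)})=0$, so \eqref{eq:nlaca} holds and $((A,\D_{(F,G)}),F)$ is a Nijenhuis special left Alia coalgebra.

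The only step involving real bookkeeping is the verification of \eqref{eq:cola} in the first claim, and even there I expect no genuine obstacle: the entire argument is term-by-term dual to the already established statements \cite[Theorem 6.2]{Dz} and Theorem~\ref{thm:q}, so the ``hard part'' is merely organizing the Sweedler indices carefully.
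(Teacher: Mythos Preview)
Your proposal is correct and is exactly the approach the paper intends: the paper itself gives no explicit proof of Proposition~\ref{pro:ahhh}, merely stating it is ``dual to \cite[Theorem 6.2]{Dz} and Theorem~\ref{thm:q}'', and your decomposition of the Nijenhuis defect into the two summands $(\id\o F)\ci\d$ and $\d\ci G$ is precisely the coalgebra mirror of the term-grouping in the paper's proof of Theorem~\ref{thm:q}. In fact you have supplied more detail than the paper does.
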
 

 Likewise, one can obtain
 \begin{cor}\label{cor:ahhh} Let $(A, \d)$ be a cocommutative coassociative coalgebra with linear maps $F, G: A\lr A$. If $((A, \d), G)$ is a Nijenhuis coassociative coalgebra and $F\circ G=G\circ F$, then $((A, \D_{(F, G)}),  G)$ is a Nijenhuis special left Alia coalgebra.
 \end{cor}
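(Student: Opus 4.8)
The plan is to dualize the computation behind Theorem~\ref{thm:q} (equivalently Corollary~\ref{cor:q}). By Proposition~\ref{pro:ahhh} the pair $(A,\D_{(F,G)})$ is already a special left Alia coalgebra, so the only point left to establish is that $G$ is a Nijenhuis operator on the coalgebra $(A,\D_{(F,G)})$, i.e.\ that \eqref{eq:nlaca} holds with $S=G$ and $\D=\D_{(F,G)}$; then, by Definition~\ref{de:ah}, $((A,\D_{(F,G)}),G)$ is a Nijenhuis special left Alia coalgebra. I would verify \eqref{eq:nlaca} by a direct Sweedler computation.

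First I would expand the four terms of \eqref{eq:nlaca}. Using the defining formula \eqref{eq:cosplaa} for $\D_{(F,G)}$ and the hypothesis $F\ci G=G\ci F$ to pull every occurrence of $F$ into the right-hand tensor leg, each term splits into one summand carrying a single $F$ and one summand built only from $G$ and $\d$; for instance
$$G(x_{(1)})\o G(x_{(2)})=G(x_{[1]})\o F(G(x_{[2]}))+G(G(x)_{[1]})\o G(G(x)_{[2]}),$$
and analogously for $G^{2}(x)_{(1)}\o G^{2}(x)_{(2)}$ (expand $\D_{(F,G)}$ at $G^{2}(x)$) and for $G(G(x)_{(1)})\o G(x)_{(2)}$ and $G(x)_{(1)}\o G(G(x)_{(2)})$ (expand $\D_{(F,G)}$ at $G(x)$).

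Next I would regroup the eight resulting summands. The four that carry an $F$ combine into $(\id\o F)$ applied to
$$G(x_{[1]})\o G(x_{[2]})+G^{2}(x)_{[1]}\o G^{2}(x)_{[2]}-G(G(x)_{[1]})\o G(x)_{[2]}-G(x)_{[1]}\o G(G(x)_{[2]}),$$
which vanishes because $((A,\d),G)$ is a Nijenhuis coassociative coalgebra: this bracket is precisely the coalgebra version of the Nijenhuis identity \eqref{eq:7} for $\d$, of the same shape as \eqref{eq:nlaca}. The remaining four summands combine into exactly the same bracket with $x$ replaced by $G(x)$, hence also vanish. Adding the two groups gives \eqref{eq:nlaca} for $(\D_{(F,G)},G)$, and together with Proposition~\ref{pro:ahhh} this completes the proof.

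I do not expect a genuine obstacle: the whole computation is the linear dual of the one in Theorem~\ref{thm:q}, and the only thing demanding care is bookkeeping the Sweedler legs of $\d$ when it is applied to $G(x)$, $G^{2}(x)$ and $G^{3}(x)$, so that both copies of the coalgebra Nijenhuis identity are correctly recognized. When $A$ is finite-dimensional one could instead apply Corollary~\ref{cor:q} to the commutative associative algebra $(A^{*},\d^{*})$ with the operators $F^{*},G^{*}$ and dualize; the direct argument above needs no finiteness assumption.
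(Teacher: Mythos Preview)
Your proposal is correct and follows the same route as the paper: the paper simply records this corollary with ``Likewise, one can obtain'', meaning the dual of the computation behind Theorem~\ref{thm:q}/Corollary~\ref{cor:q}, and your expansion of \eqref{eq:nlaca} for $\D_{(F,G)}$ into two copies of the coalgebraic Nijenhuis identity for $(A,\d,G)$ (one at $x$, one at $G(x)$), using $F\ci G=G\ci F$ to factor $F$ into the right leg, is exactly that dual argument made explicit.
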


 \begin{thm}\label{thm:ajjj} Let $f, g, F, G: A\lr A$ be linear maps such that any two of them are commutative, $((A, \cdot, \d), f, F)$ a commutative and cocommutative Nijenhuis associative D-bialgebra and Eq.\eqref{eq:pro:ajj1} holds. Then $((A, [, ]_{(f, g)}, \D_{(F, G)}), f, F)$ is a Nijenhuis special left Alia bialgebra.
 \end{thm}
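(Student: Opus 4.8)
The plan is to verify the five conditions of Definition \ref{de:au} for the tuple $((A, [,]_{(f,g)}, \D_{(F,G)}), f, F)$, i.e.\ to take $N=f$ and $S=F$ there. Conditions \ref{it:de:au1}--\ref{it:de:au3} follow from results already established. Indeed, \ref{it:de:au1} (that $(A, [,]_{(f,g)}, \D_{(F,G)})$ is a left Alia bialgebra) is Theorem \ref{thm:ajj}, whose hypothesis Eq.\eqref{eq:pro:ajj1} is assumed; \ref{it:de:au2} (that $((A, [,]_{(f,g)}), f)$ is a Nijenhuis left Alia algebra) follows from Theorem \ref{thm:q} applied to the Nijenhuis associative algebra $((A,\cdot),f)$ underlying the D-bialgebra, using $f\ci g=g\ci f$; and \ref{it:de:au3} (that $((A, \D_{(F,G)}), F)$ is a Nijenhuis left Alia coalgebra) follows from Proposition \ref{pro:ahhh} applied to the Nijenhuis coassociative coalgebra $((A,\d),F)$, using $F\ci G=G\ci F$.

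The core of the argument is conditions \ref{it:de:au4} and \ref{it:de:au5}. For \ref{it:de:au4} I would check Eqs.\eqref{eq:15}--\eqref{eq:16} with $N=f$, $S=F$, $[,]=[,]_{(f,g)}$ by direct computation: first expand each bracket via Eq.\eqref{eq:special}, then move every occurrence of $g$ to the outermost position using the commutativity relations $g\ci f=f\ci g$ and $g\ci F=F\ci g$. After also using $f\ci F=F\ci f$ to relabel, the difference of the two sides of each of Eqs.\eqref{eq:15}--\eqref{eq:16} splits into a ``$g$-free'' part plus $g$ applied to that same part. The $g$-free part, after the substitution $y\mapsto f(y)$ and up to the commutativity of $\cdot$, coincides with the compatibility $F(f(a)\cdot b)+a\cdot F^{2}(b)=F(a\cdot F(b))+f(a)\cdot F(b)$ between $f$, $F$ and $\cdot$ that is among the defining axioms of a Nijenhuis associative D-bialgebra \cite{MLo}; hence both parts vanish and \ref{it:de:au4} holds.

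Condition \ref{it:de:au5} --- that $f^{*}$ is adjoint-admissible to $((A^{*},\D_{(F,G)}^{*}),F^{*})$, i.e.\ that Eqs.\eqref{eq:laacaadmi1}--\eqref{eq:laacaadmi2} hold with $N=f$, $S=F$, $\D=\D_{(F,G)}$ --- I would treat dually: expand $\D_{(F,G)}$ via Eq.\eqref{eq:cosplaa}, pull $G$ to the outside with $G\ci f=f\ci G$ and $G\ci F=F\ci G$, and reduce to the dual compatibility between $f$, $F$ and $\d$ (the coassociative-side coherence axiom of the D-bialgebra, dual to the one used for \ref{it:de:au4}), with the $G$-terms again being that identity composed with $G$. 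Once \ref{it:de:au1}--\ref{it:de:au5} are in place, Definition \ref{de:au} gives that $((A, [,]_{(f,g)}, \D_{(F,G)}), f, F)$ is a Nijenhuis special left Alia bialgebra.

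The main obstacle I anticipate is purely bookkeeping: organizing the many terms produced by expanding $[,]_{(f,g)}$ and $\D_{(F,G)}$ so that, after systematic use of the commutativity relations among $f,g,F,G$ (and a harmless change of variable via $f\ci F=F\ci f$), the non-$g$ (resp.\ non-$G$) contributions line up precisely with the associative (resp.\ coassociative) admissibility axiom of the Nijenhuis associative D-bialgebra, while the remaining contributions are that axiom pre- or post-composed with $g$ (resp.\ $G$). No genuinely new identity beyond those axioms and the stated commutativity should be needed.
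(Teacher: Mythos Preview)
Your proposal is correct and follows essentially the same route as the paper: reduce conditions \ref{it:de:au1}--\ref{it:de:au3} to Theorems \ref{thm:ajj}, \ref{thm:q} and Proposition \ref{pro:ahhh}, then verify \ref{it:de:au4}--\ref{it:de:au5} by expanding via Eqs.\eqref{eq:special}, \eqref{eq:cosplaa}, using the pairwise commutativity of $f,g,F,G$ to pull $g$ (resp.\ $G$) outside, and reducing to the D-bialgebra compatibility $F(f(x)\cdot y)+x\cdot F^{2}(y)=f(x)\cdot F(y)+F(x\cdot F(y))$ and its coalgebra dual. One small correction: the $g$-free block is that identity with $y$ replaced by $f(y)$, while the $g$-block is $g$ applied to the identity itself (no substitution), so they are not literally ``$g$ applied to that same part'' but both vanish by the same axiom --- your final paragraph already states this accurately.
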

 
 \begin{proof} By Theorem \ref{thm:ajj}, we have $(A, [, ]_{(f, g)}, \D_{(F, G)})$ is a special left Alia bialgebra. By Theorem \ref{thm:q}, we have $((A, [, ]_{(f, g)}), f)$ is a Nijenhuis special left Alia algebra and  by Proposition \ref{pro:ahhh}, $((A, \D_{(F, G)}), F)$ is a Nijenhuis special left Alia coalgebra. Next we only need to check that Eqs.(\ref{eq:15}), (\ref{eq:16}), (\ref{eq:laacaadmi1}) and (\ref{eq:laacaadmi2}) hold for $[, ]_{(f, g)}, \D_{(F, G)}$ and $f, F$. By $((A, \cdot, \d), f, F)$ is a commutative cocommutative Nijenhuis associative D-bialgebra, for all $x, y\in A$, one gets
 \begin{eqnarray}
 &F(f(x)\cdot y)+x\cdot F^2(y)=f(x)\cdot F(y)+F(x\cdot F(y)),&\label{eq:pduql}\\
 &F(f(x)_{[1]})\o f(x)_{[2]}+x_{[1]}\o f^2(x_{[2]})=F(x_{[1]})\o f(x_{[2]})+f(x)_{[1]}\o f(f(x)_{[2]}).& \label{eq:pduqld}
 \end{eqnarray}
 Then we compute as follows.
 \begin{eqnarray*}
 &&\hspace{-16mm}F([f(x), y]_{(f, g)})+[x, F^{2}(y)]_{(f, g)}-F([x, F(y)]_{(f, g)})-[f(x), F(y)]_{(f, g)}\\
 &\stackrel{\eqref{eq:special}}{=}& F(f(x)\cdot f(y))+F(g(f(x)\cdot y))+x\cdot f(F^{2}(y))+g(x\cdot F^{2}(y))-F(x\cdot f(F(y)))\\
 &&-F(g(x\cdot F(y)))-f(x)\cdot f(F(y))-g(f(x)\cdot F(y))\\
 &=& F(f(x)\cdot f(y))+g(F(f(x)\cdot y))+x\cdot F^{2}(f(y))+g(x\cdot F^{2}(y))-F(x\cdot F(f(y)))\\
 &&-g(F(x\cdot F(y)))-f(x)\cdot F(f(y))-g(f(x)\cdot F(y))\\
 &\stackrel{\eqref{eq:pduql}}{=}&0.
 \end{eqnarray*}
 Then Eq.(\ref{eq:15}) holds for $[, ]_{(f, g)}$ and $f, F$. Similarly, Eq. (\ref{eq:16}) holds for $[, ]_{(f, g)}$ and $f, F$. 
 \begin{eqnarray*}
 &&\hspace{-16mm}F(f(x)_{(1)})\o f(x)_{(2)}+x_{(1)}\o f^{2}(x_{(2)})-F(x_{(1)})\o f(x_{(2)}) - f(x)_{(1)}\o f(f(x)_{(2)})\\
 &\stackrel{\eqref{eq:cosplaa}}{=}&F(f(x)_{[1]})\o F(f(x)_{[2]})+F(G(f(x))_{[1]})\o G(f(x))_{[2]} +x_{[1]}\o f^{2}(F(x_{[2]}))\\
 &&+G(x)_{[1]}\o f^{2}(G(x)_{[2]})-F(x_{[1]})\o f(F(x_{[2]}))-F(G(x)_{[1]})\o f(G(x)_{[2]})\\
 &&-f(x)_{[1]}\o f(F(f(x)_{[2]}))- G(f(x))_{[1]}\o f(G(f(x))_{[2]})\\
 &=&F(f(x)_{[1]})\o F(f(x)_{[2]})+F(f(G(x))_{[1]})\o f(G(x))_{[2]} +x_{[1]}\o F(f^{2}(x_{[2]}))\\
 &&+G(x)_{[1]}\o f^{2}(G(x)_{[2]})-F(x_{[1]})\o f(F(x_{[2]}))-F(G(x)_{[1]})\o f(G(x)_{[2]})\\
 &&-f(x)_{[1]}\o F(f(f(x)_{[2]}))- f(G(x))_{[1]}\o f(f(G(x))_{[2]})\\
 &\stackrel{\eqref{eq:pduqld}}{=}&0.
 \end{eqnarray*}
 Then Eq.(\ref{eq:laacaadmi1}) holds for $\D_{(F, G)}$ and $f, F$. Similarly, Eq. (\ref{eq:laacaadmi2}) holds for $\D_{(F, G)}$ and $f, F$. 
 Hence, $((A, [, ]_{(f, g)}, \D_{(F, G)}), f, F)$ is a Nijenhuis special left Alia  bialgebra.
 \end{proof}

 \begin{cor}\label{cor:ajjjh} Let $((A, \cdot, \d), f, g)$ be a commutative cocommutative Nijenhuis associative D-bialgebra. If $f\circ g=g\circ f$, then $((A, [, ]_{(f, g)}, \D_{(g, f)}), f, g)$ is a Nijenhuis special left Alia bialgebra.
 \end{cor}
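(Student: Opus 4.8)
The plan is to obtain this as a direct specialization of Theorem \ref{thm:ajjj}, taking $F=g$ and $G=f$. Recall that Theorem \ref{thm:ajjj} produces a Nijenhuis special left Alia bialgebra $((A,[,]_{(f,g)},\D_{(F,G)}),f,F)$ from three ingredients: (i) the four maps $f,g,F,G$ are pairwise commutative; (ii) $((A,\cdot,\d),f,F)$ is a commutative and cocommutative Nijenhuis associative D-bialgebra; and (iii) the compatibility identity Eq.\eqref{eq:pro:ajj1} holds. With the substitution $F=g$, $G=f$, the conclusion of Theorem \ref{thm:ajjj} reads exactly ``$((A,[,]_{(f,g)},\D_{(g,f)}),f,g)$ is a Nijenhuis special left Alia bialgebra'', which is the desired statement, so it suffices to check that the three hypotheses hold under the assumptions of the corollary.

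For hypothesis (i): once $F=g$ and $G=f$, the collection $\{f,g,F,G\}$ consists only of $f$ and $g$, so pairwise commutativity is equivalent to $f\circ g=g\circ f$, which is assumed. For hypothesis (ii): with $F=g$, this is literally the standing assumption that $((A,\cdot,\d),f,g)$ is a commutative and cocommutative Nijenhuis associative D-bialgebra. For hypothesis (iii): forgetting the operators, $(A,\cdot,\d)$ is a commutative and cocommutative associative D-bialgebra, so Corollary \ref{cor:ajjh} applies with this $f$ and $g$; its proof verifies precisely that Eq.\eqref{eq:pro:ajj1} holds with $F=g$ and $G=f$, the cancellation being driven by cocommutativity in the form of Eq.\eqref{eq:asi1}. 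Hence all three hypotheses of Theorem \ref{thm:ajjj} are satisfied and the corollary follows.

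I do not expect a genuine obstacle here: the content of the corollary is already contained in Theorem \ref{thm:ajjj} together with Corollary \ref{cor:ajjh}, and the only point requiring care is the bookkeeping of which map plays the role of $F$ versus $G$ and confirming that the identity checked in the proof of Corollary \ref{cor:ajjh} is exactly hypothesis (iii) of Theorem \ref{thm:ajjj}. If a self-contained argument is preferred, one may instead invoke Theorem \ref{thm:q} (so that $((A,[,]_{(f,g)}),f)$ is a Nijenhuis left Alia algebra), Proposition \ref{pro:ahhh} (so that $((A,\D_{(g,f)}),g)$ is a Nijenhuis left Alia coalgebra), Corollary \ref{cor:ajjh} (for the left Alia bialgebra compatibility Eq.\eqref{eq:labia}), and then verify Eqs.\eqref{eq:15}, \eqref{eq:16}, \eqref{eq:laacaadmi1} and \eqref{eq:laacaadmi2} for $[,]_{(f,g)}$, $\D_{(g,f)}$ and $f,g$ by repeating verbatim the short computations in the proof of Theorem \ref{thm:ajjj} with $F$ replaced throughout by $g$; but the one-line deduction from Theorem \ref{thm:ajjj} is the cleanest route.
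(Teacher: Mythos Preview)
Your proposal is correct and follows essentially the same route as the paper: the paper's proof invokes Corollary \ref{cor:ajjh} to ensure $(A,[,]_{(f,g)},\D_{(g,f)})$ is a special left Alia bialgebra (equivalently, Eq.\eqref{eq:pro:ajj1} holds with $F=g$, $G=f$), and then applies Theorem \ref{thm:ajjj} using $f\circ g=g\circ f$. Your more detailed verification of hypotheses (i)--(iii) is exactly this argument unpacked.
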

 
 \begin{proof} By Corollary \ref{cor:ajjh}, we have $(A,[, ]_{(f, g)},\D_{(g, f)})$ is a special left Alia bialgebra. Since $f\circ g=g\circ f$, by Theorem \ref{thm:ajjj}  we have $((A, [, ]_{(f, g)}, \D_{(g, f)}), f, g)$ is a Nijenhuis special left Alia bialgebra.
 \end{proof}

 \begin{cor} Let $f, g, F, G: A\lr A$ be linear maps such that any two of them commutate.
 \begin{enumerate}[(1)]
 \item If $((A, \cdot, \d), f, G)$ is a commutative cocommutative Nijenhuis associative D-bialgebra and Eq.\eqref{eq:pro:ajj1} holds, then $((A, [, ]_{(f, g)}, \D_{(F, G)}), f, G)$ is a Nijenhuis special left Alia bialgebra.
 \item If $((A, \cdot, \d), g, F)$ is a commutative cocommutative Nijenhuis associative D-bialgebra and Eq.\eqref{eq:pro:ajj1} holds, then $((A, [, ]_{(f, g)}, \D_{(F, G)}), g, F)$ is a Nijenhuis special left Alia bialgebra.
 \item If $((A, \cdot, \d), g, G)$ is a commutative cocommutative Nijenhuis associative D-bialgebra and Eq.\eqref{eq:pro:ajj1} holds, then $((A, [, ]_{(f, g)}, \D_{(F, G)}), g, G)$ is a Nijenhuis special left Alia bialgebra.
 \item If $((A, \cdot, \d), f, f)$ is a commutative cocommutative Nijenhuis associative D-bialgebra, then $((A, [, ]_{(f, g)}, \D_{(g, f)}), f, f)$ is a Nijenhuis special left Alia bialgebra.
 \item If $((A, \cdot, \d), g, f)$ is a commutative cocommutative Nijenhuis associative D-bialgebra, then $((A, [, ]_{(f, g)}, \D_{(g, f)})), g, f)$ is a Nijenhuis special left Alia bialgebra.
 \item If $((A, \cdot, \d), g, g)$ is a commutative cocommutative Nijenhuis associative D-bialgebra, then $((A, [, ]_{(f, g)}, \D_{(g, f)})), g, g)$ is a Nijenhuis special left Alia bialgebra.
 \end{enumerate}
 \end{cor}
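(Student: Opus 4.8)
The plan is to reduce the six cases to the two theorems already at our disposal, \textbf{Theorem \ref{thm:ajjj}} and \textbf{Corollary \ref{cor:ajjjh}}, so that no new computation is needed. The key observation is that the conclusion of each item only requires knowing which single map $N$ from the set $\{f,g\}$ serves as the Nijenhuis operator on $(A,[,]_{(f,g)})$ and which single map $S$ from $\{F,G\}$ (or $\{f,g\}$ in items (4)--(6)) serves as the Nijenhuis operator on $(A,\Delta_{(F,G)})$; everything else follows formally once the hypotheses ``any two of $f,g,F,G$ commute'' and ``$((A,\cdot,\delta),N,S)$ is a commutative cocommutative Nijenhuis associative D-bialgebra'' are in force.

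For items (1)--(3) I would argue exactly as in the proof of \textbf{Theorem \ref{thm:ajjj}}, but with the roles of the two distinguished maps shifted. Concretely, for item (1) the pair $(N,S)=(f,G)$ is the Nijenhuis D-bialgebra structure; \textbf{Theorem \ref{thm:ajj}} (whose hypothesis Eq.\eqref{eq:pro:ajj1} is assumed) gives that $(A,[,]_{(f,g)},\Delta_{(F,G)})$ is a left Alia bialgebra, \textbf{Theorem \ref{thm:q}} applied to $((A,\cdot),f)$ with $f\circ g=g\circ f$ gives the Nijenhuis left Alia algebra $((A,[,]_{(f,g)}),f)$, and \textbf{Corollary \ref{cor:ahhh}} applied to $((A,\delta),G)$ with $F\circ G=G\circ F$ gives the Nijenhuis left Alia coalgebra $((A,\Delta_{(F,G)}),G)$. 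It then remains to verify the admissibility conditions Eqs.\eqref{eq:15}--\eqref{eq:16} for $[,]_{(f,g)}$ with $(N,S)=(f,G)$ and Eqs.\eqref{eq:laacaadmi1}--\eqref{eq:laacaadmi2} for $\Delta_{(F,G)}$ with $(N,S)=(f,G)$; these follow verbatim from the two displayed computations in the proof of \textbf{Theorem \ref{thm:ajjj}} after renaming the coalgebra Nijenhuis operator from $F$ to $G$, using the D-bialgebra compatibility Eqs.\eqref{eq:pduql}--\eqref{eq:pduqld} now with $(f,G)$ in place of $(f,F)$ and the remaining commutation relations $f\circ g=g\circ f$, $f\circ F=F\circ f$, etc. Items (2) and (3) are identical with $(N,S)$ replaced by $(g,F)$ and $(g,G)$ respectively, using \textbf{Corollary \ref{cor:q}} in place of \textbf{Theorem \ref{thm:q}} where the algebra Nijenhuis operator is $g$, and \textbf{Proposition \ref{pro:ahhh}} or \textbf{Corollary \ref{cor:ahhh}} accordingly on the coalgebra side.

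For items (4)--(6) the coalgebra comultiplication is the special one $\Delta_{(g,f)}$, so \textbf{Corollary \ref{cor:ajjh}} already supplies the left Alia bialgebra structure unconditionally (no Eq.\eqref{eq:pro:ajj1} needed), and I would simply invoke \textbf{Corollary \ref{cor:ajjjh}} after noting that its hypothesis $f\circ g=g\circ f$ is part of ``any two of $f,g,F,G$ commute'' — except that \textbf{Corollary \ref{cor:ajjjh}} as stated only covers the case $(N,S)=(f,g)$. So for item (4), where $(N,S)=(f,f)$, for item (5), where $(N,S)=(g,f)$, and for item (6), where $(N,S)=(g,g)$, I would re-run the proof of \textbf{Theorem \ref{thm:ajjj}} (or of \textbf{Corollary \ref{cor:ajjjh}}) with the appropriate choice of which of $f,g$ plays the algebra operator and which the coalgebra operator, again citing \textbf{Theorem \ref{thm:q}} or \textbf{Corollary \ref{cor:q}} on the algebra side and \textbf{Proposition \ref{pro:ahhh}} or \textbf{Corollary \ref{cor:ahhh}} on the coalgebra side, and observing that the admissibility computations go through because all the relevant maps commute pairwise.

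The only real point requiring care — and the place I'd expect a referee to look hardest — is the bookkeeping in the admissibility verification: in \textbf{Theorem \ref{thm:ajjj}} the computation uses specifically that the \emph{same} map $f$ appears in the bracket $[,]_{(f,g)}$ \emph{and} is the algebra Nijenhuis operator, and that $F$ is \emph{both} the coalgebra Nijenhuis operator \emph{and} one of the defining maps of $\Delta_{(F,G)}$. In items (1)--(3) the algebra Nijenhuis operator ($f$ or $g$) is still one of the defining maps of the bracket, and the coalgebra Nijenhuis operator ($F$ or $G$) is still one of the defining maps of $\Delta_{(F,G)}$, so the structural shape of the argument is unchanged and only the names of maps in Eqs.\eqref{eq:pduql}--\eqref{eq:pduqld} move. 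In items (4)--(6), since $\Delta_{(g,f)}$ has defining maps $g$ and $f$, any choice $S\in\{f,g\}$ is again a defining map of the comultiplication, so the same remark applies. Hence the obstacle is purely notational, and I would present the proof as: ``The six statements follow from Theorem \ref{thm:ajjj}, Corollary \ref{cor:ajjh}, Corollary \ref{cor:ajjjh}, Theorem \ref{thm:q}, Corollary \ref{cor:q}, Proposition \ref{pro:ahhh} and Corollary \ref{cor:ahhh} by the same argument as in Theorem \ref{thm:ajjj}, with the distinguished pair of commuting maps chosen as indicated in each item,'' expanding one representative case (say item (1)) in a few lines to show the admissibility identities reduce as claimed, and leaving the rest to the reader.
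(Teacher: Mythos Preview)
Your proposal is correct and matches the paper's intended approach: the paper gives no proof for this corollary at all, leaving it as an immediate consequence of Theorem~\ref{thm:ajjj} and Corollary~\ref{cor:ajjjh} (together with Theorem~\ref{thm:q}, Corollary~\ref{cor:q}, Proposition~\ref{pro:ahhh}, Corollary~\ref{cor:ahhh}, and Corollary~\ref{cor:ajjh}) by choosing the distinguished pair of commuting maps as indicated in each item. Your sketch is in fact more detailed than anything the paper provides, and your observation that in every item the chosen algebra operator lies in $\{f,g\}$ and the chosen coalgebra operator is one of the defining maps of the comultiplication is precisely what makes the admissibility computations of Theorem~\ref{thm:ajjj} go through unchanged.
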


 \section{A construction of Nijenhuis left Alia algebras}\label{se:nijcon} In order to give a construction of Nijenhuis left Alia algebras, we need to introduce some notions and related results.

 \begin{defi}\label{de:x} Let $(A, \D)$ be a left Alia coalgebra and $\omega\in (A\o A)^{*}$ a bilinear from. The equation
 \begin{eqnarray}\label{eq:dybq}
 \omega(x_{(1)}, y)\omega(x_{(2)}, z)+\omega(x, y_{(2)})\omega(y_{(1)}, z)=\omega(x, y_{(1)})\omega(y_{(2)}, z)+\omega(x, z_{(2)})\omega(y, z_{(1)}),
 \end{eqnarray}
 where $x, y, z\in A$, is called a {\bf co-left Alia Yang-Baxter equation} in $(A, \D)$.
 \end{defi}

 \begin{thm}\label{thm:y} Let $(A, \D)$ be a left Alia coalgebra and $\omega\in( A\o A)^{*}$. Suppose that $\omega$ is skew-symmetric (in the sense of $\omega(x, y)+\omega(y, x)=0$) solution of the co-left Alia Yang-Baxter equation in $(A, \D)$. Then $(A, [, ]_{\omega}, \D)$ is a left Alia bialgebra, where $[, ]_{\omega}: A\o A\lr A$ is given by
 \begin{eqnarray}\label{eq:muti}
 [x, y]_{\omega}=x_{(2)}\omega(x_{(1)}, y)-x_{(1)}\omega(x_{(2)}, y)-y_{(2)}\omega(x, y_{(1)}), \forall~ x,y\in A.
 \end{eqnarray} 	
 This bialgebra is called a {\bf dual triangular left Alia bialgebra}.
 \end{thm}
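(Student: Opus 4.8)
The plan is to verify directly the two conditions in the definition of a left Alia bialgebra for the triple $(A,[,]_\omega,\D)$: first, that $(A,[,]_\omega)$ is a left Alia algebra; second, that the compatibility identity \eqref{eq:labia} holds, where $\mathcal{R}$ is the operator of the adjoint representation of $(A,[,]_\omega)$ from Example~\ref{ex:d}, i.e.\ $\mathcal{R}(y)(x)=[x,y]_\omega$. As a guide and sanity check, note that the statement is the linear dual of Definition~\ref{de:v}: when $\dim A<\infty$, a skew-symmetric $\omega\in(A\o A)^{*}\cong A^{*}\o A^{*}$ solving \eqref{eq:dybq} is precisely an antisymmetric solution of the left Alia Yang-Baxter equation \eqref{eq:ybq} in the dual left Alia algebra $(A^{*},\D^{*})$, and one checks that $[,]_\omega$ is the transpose of the comultiplication $\D_r$ attached to $r=\omega$ by \eqref{eq:comuti}; thus the finite-dimensional case follows from Definition~\ref{de:v} by transposition, and for general $A$ the same manipulations are carried out on elements.

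For the first condition I would substitute \eqref{eq:muti} into the symmetric Jacobi identity written in the form
\[
[[x,y]_\omega-[y,x]_\omega,\,z]_\omega+[[y,z]_\omega-[z,y]_\omega,\,x]_\omega+[[z,x]_\omega-[x,z]_\omega,\,y]_\omega=0 .
\]
Skew-symmetry of $\omega$ first collapses $[x,y]_\omega-[y,x]_\omega$ to $-x_{(1)}\,\omega(x_{(2)},y)-y_{(1)}\,\omega(x,y_{(2)})$, so that applying \eqref{eq:muti} once more turns each outer bracket into a sum of monomials of the shape $(\text{element})\cdot\omega(\cdot,\cdot)\,\omega(\cdot,\cdot)$ in which the comultiplication has been iterated, the relevant iterated coproduct being $(\D\o\id)\D$ of one of $x,y,z$. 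The left Alia coalgebra axiom \eqref{eq:cola} is then used to rewrite the permutations of each such iterated coproduct, and the co-left Alia Yang-Baxter equation \eqref{eq:dybq}, applied to the appropriate triples among $\{x,y,z\}$ and their coproduct components, forces all remaining monomials to cancel. I expect this term bookkeeping — matching each block of monomials with the right instance of \eqref{eq:dybq} — to be the main obstacle, exactly as in the proofs of Proposition~\ref{pro:w} and Lemma~\ref{lem:bii} for the triangular case.

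For the second condition I would expand both sides of \eqref{eq:labia} using \eqref{eq:muti}: the left side becomes $(\tau-\id^{\o 2})\bigl(-\D(x_{(1)})\,\omega(x_{(2)},y)-\D(y_{(1)})\,\omega(x,y_{(2)})\bigr)$, while the right side is $(\tau-\id^{\o 2})\bigl([x_{(1)},y]_\omega\o x_{(2)}-[y_{(1)},x]_\omega\o y_{(2)}\bigr)$, and expanding $[x_{(1)},y]_\omega$ again produces iterated coproducts of $x$ and of $y$. Applying $\tau-\id^{\o 2}$ and regrouping the iterated coproducts by \eqref{eq:cola}, together with skew-symmetry of $\omega$, makes the two sides equal; this step uses only that $(A,\D)$ is a left Alia coalgebra and does not invoke the Yang-Baxter equation, mirroring the familiar fact that in triangular bialgebra theory the compatibility (cocycle) condition is automatic while the (co-)Jacobi identity is what forces the (co-)Yang-Baxter equation. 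The two conditions together give that $(A,[,]_\omega,\D)$ is a left Alia bialgebra, the dual triangular one.
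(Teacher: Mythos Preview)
Your plan is correct and matches the paper's proof. The paper likewise reduces the symmetric Jacobi identity via the simplification $[x,y]_\omega-[y,x]_\omega=-x_{(1)}\omega(x_{(2)},y)+y_{(1)}\omega(y_{(2)},x)$ (your expression rewritten with skew-symmetry), expands the three outer brackets, applies the co-left Alia Yang-Baxter equation \eqref{eq:dybq} to three of the resulting blocks, and then kills the remainder with the coalgebra axiom \eqref{eq:cola}; the remaining bialgebra compatibility is dismissed as ``straightforward'', which is exactly your observation that it follows from skew-symmetry of $\omega$ together with \eqref{eq:cola} alone, without invoking \eqref{eq:dybq}.
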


 \begin{proof} We only check that the symmetric Jacobi identity holds for $[, ]_{\omega}$ as follows, the rest are straightforward. For all $x, y, z\in A$, we have
 \begin{eqnarray*}
 [x, y]_{\omega}-[x, y]_{\omega}
 \hspace{-3mm}&=&\hspace{-3mm}x_{(2)}\omega(x_{(1)}, y)-x_{(1)}\omega(x_{(2)}, y)-y_{(2)}\omega(x, y_{(1)})-y_{(2)}\omega(y_{(1)}, x)\\
 &&\hspace{-3mm}+y_{(1)}\omega(y_{(2)}, x)+x_{(2)}\omega(y, x_{(1)})\\
 &=&\hspace{-3mm}-x_{(1)}\omega(x_{(2)}, y)+y_{(1)}\omega(y_{(2)}, x).
 \end{eqnarray*}
 So
 \begin{eqnarray*}
 &&\hspace{-15mm}[[x, y]_{\omega}-[x, y]_{\omega}, z]_{\omega}+[[y, z]_{\omega}-[y, z]_{\omega}, x]_{\omega}+	[[z, x]_{\omega}-[x, z]_{\omega}, y]_{\omega}\\ &=&\hspace{-3mm}-\omega(x_{(2)}, y)\omega(x_{(1)(1)}, z)x_{(1)(2)}
 +\omega(x_{(2)}, y)\omega(x_{(1)(2)}, z)x_{(1)(1)}
 +\underline{\omega(x_{(2)}, y)\omega(x_{(1)}, z_{(1)})z_{(2)}}\\
 &&\hspace{-3mm}+\omega(y_{(2)}, x)\omega(y_{(1)(1)}, z)y_{(1)(2)}
 -\omega(y_{(2)}, x)\omega(y_{(1)(2)}, z)y_{(1)(1)}
 -\underline{\omega(y_{(2)}, x)\omega(y_{(1)}, z_{(1)})z_{(2)}}\\
 &&\hspace{-3mm}-\omega(y_{(2)}, z)\omega(y_{(1)(1)}, x)y_{(1)(2)}
 +\omega(y_{(2)}, z)\omega(y_{(1)(2)}, x)y_{(1)(1)}
 +\underbrace{\omega(y_{(2)}, z)\omega(y_{(1)}, x_{(1)})x_{(2)}}\\
 &&\hspace{-3mm}+\omega(z_{(2)}, y)\omega(z_{(1)(1)}, x)z_{(1)(2)}-\omega(z_{(2)}, y)\omega(z_{(1)(2)}, x)z_{(1)(1)}
 -\underbrace{\omega(z_{(2)}, y)\omega(z_{(1)}, x_{(1)})x_{(2)}}\\
 &&\hspace{-3mm}-\omega(z_{(2)}, x)\omega(z_{(1)(1)}, y)z_{(1)(2)}+\omega(z_{(2)}, x)\omega(z_{(1)(2)}, y)z_{(1)(1)}
 +\underleftrightarrow{\omega(z_{(2)}, x)\omega(z_{(1)}, y_{(1)})y_{(2)}}\\
 &&\hspace{-3mm}+\omega(x_{(2)}, z)\omega(x_{(1)(1)}, y)x_{(1)(2)}-\omega(x_{(2)}, z)\omega(x_{(1)(2)}, y)x_{(1)(1)}
 -\underleftrightarrow{\omega(x_{(2)}, z)\omega(x_{(1)}, y_{(1)})y_{(2)}}\\
 &\stackrel{\eqref{eq:dybq}}{=}&\hspace{-3mm}\underbrace{-\omega(x_{(2)}, y)\omega(x_{(1)(1)}, z)x_{(1)(2)}+\omega(x_{(2)}, y)\omega(x_{(1)(2)}, z)x_{(1)(1)}
 -\omega(y, x_{(1)(2)})\omega(x_{(1)(1)}, z)x_{(2)}}\\
 &&\hspace{-3mm}\underleftrightarrow{+\omega(y_{(2)}, x)\omega(y_{(1)(1)}, z)y_{(1)(2)}-\omega(y_{(2)}, x)\omega(y_{(1)(2)}, z)y_{(1)(1)}
 +\omega(z, y_{(1)(1)})\omega(y_{(1)(2)}, z)y_{(2)}}\\
 &&\hspace{-3mm}\underleftrightarrow{-\omega(y_{(2)}, z)\omega(y_{(1)(1)}, x)y_{(1)(2)}+\omega(y_{(2)}, z)\omega(y_{(1)(2)}, x)y_{(1)(1)}
 -\omega(z, y_{(1)(2)})\omega(y_{(1)(1)}, x)y_{(2)}}\\
 &&\hspace{-3mm}\underline{+\omega(z_{(2)}, y)\omega(z_{(1)(1)}, x)z_{(1)(2)}-\omega(z_{(2)}, y)\omega(z_{(1)(2)}, x)z_{(1)(1)}
 -\omega(x, z_{(1)(2)})\omega(z_{(1)(1)}, y)z_{(2)}}\\
 &&\hspace{-3mm}\underline{-\omega(z_{(2)}, x)\omega(z_{(1)(1)}, y)z_{(1)(2)}+\omega(z_{(2)}, x)\omega(z_{(1)(2)}, y)z_{(1)(1)}
 +\omega(x, z_{(1)(1)})\omega(z_{(1)(2)}, y)z_{(2)}}\\
 &&\hspace{-3mm}\underbrace{+\omega(x_{(2)}, z)\omega(x_{(1)(1)}, y)x_{(1)(2)}-\omega(x_{(2)}, z)\omega(x_{(1)(2)}, y)x_{(1)(1)}
 +\omega(y, x_{(1)(1)})\omega(x_{(1)(2)}, z)x_{(2)}}\\
 &\stackrel{\eqref{eq:cola}}{=}&0,
 \end{eqnarray*}
 completing the proof.
 \end{proof}

 Dual to Proposition \ref{pro:w}, one can get
 \begin{pro}\label{pro:z} Let $(A, \D)$ be a left Alia coalgebra and $\omega\in( A\o A)^{*}$ a bilinear form. Then $\omega$ is a solution of co-left Alia Yang-Baxter equation in $(A, \D)$, if and only if
 \begin{eqnarray}\label{eq:mw}
 \omega(x, [y, z]_{\omega})=-\omega(x_{(1)}, y)\omega(x_{(2)}, z),
 \end{eqnarray}
 where $[, ]_{\omega}$ is defined by Eq.\eqref{eq:muti}. 	

 If further $\omega\in( A\o A)^{*}$ is skew-symmetric, then $\omega$ is a solution of co-left Alia Yang-Baxter equation in $(A, \D)$, if and only if
 \begin{eqnarray}\label{eq:mwr}
 \omega([x, y]_{\omega}, z)=\omega(x, z_{(1)})\omega(y, z_{(2)}),
 \end{eqnarray}
 holds, where $x, y, z\in A$.
 \end{pro}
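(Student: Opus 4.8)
The plan is to prove both equivalences by direct expansion, in parallel with the proof of Proposition \ref{pro:w}, of which the present statement is the linear dual: the tensor $r\in A\o A$ there is replaced here by the bilinear form $\omega\in(A\o A)^{*}$, the coproduct $\D_{r}$ is replaced by the product $[\,,\,]_{\omega}$ of \eqref{eq:muti}, and the cyclic permutation $\xi$ of $A^{\o 3}$ used in the antisymmetric case is replaced by the cyclic relabelling $(x,y,z)\mapsto(z,x,y)$ of the free variables.

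First I would treat the first equivalence. Substituting the definition \eqref{eq:muti} of $[y,z]_{\omega}$ into $\omega(x,[y,z]_{\omega})$ and using bilinearity of $\omega$ gives
\[
\omega(x,[y,z]_{\omega})=\omega(x,y_{(2)})\omega(y_{(1)},z)-\omega(x,y_{(1)})\omega(y_{(2)},z)-\omega(x,z_{(2)})\omega(y,z_{(1)}).
\]
Adding $\omega(x_{(1)},y)\omega(x_{(2)},z)$ to both sides, the right-hand side becomes exactly the difference of the two sides of \eqref{eq:dybq}. Hence $\omega(x,[y,z]_{\omega})=-\omega(x_{(1)},y)\omega(x_{(2)},z)$ holds for all $x,y,z\in A$ if and only if \eqref{eq:dybq} holds, i.e. $\omega$ is a solution of the co-left Alia Yang-Baxter equation; as in the proof of Proposition \ref{pro:w}, no left Alia coalgebra axiom is needed at this stage.

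For the skew-symmetric case I would expand $\omega([x,y]_{\omega},z)$ via \eqref{eq:muti} into the three terms $\omega(x_{(2)},z)\omega(x_{(1)},y)-\omega(x_{(1)},z)\omega(x_{(2)},y)-\omega(y_{(2)},z)\omega(x,y_{(1)})$, and then apply the skew-symmetry $\omega(a,b)=-\omega(b,a)$ to each of these terms and to $\omega(x,z_{(1)})\omega(y,z_{(2)})$. A term-by-term comparison yields
\[
\omega([x,y]_{\omega},z)-\omega(x,z_{(1)})\omega(y,z_{(2)})=-\bigl(\omega(z_{(1)},x)\omega(z_{(2)},y)+\omega(z,x_{(2)})\omega(x_{(1)},y)-\omega(z,x_{(1)})\omega(x_{(2)},y)-\omega(z,y_{(2)})\omega(x,y_{(1)})\bigr),
\]
whose right-hand side is precisely the negative of the defining expression of \eqref{eq:dybq} evaluated at $(z,x,y)$ in place of $(x,y,z)$. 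Since $x,y,z$ range over all of $A$, the vanishing of this expression for all $x,y,z$ is equivalent to \eqref{eq:dybq}; thus, when $\omega$ is skew-symmetric, $\omega([x,y]_{\omega},z)=\omega(x,z_{(1)})\omega(y,z_{(2)})$ for all $x,y,z$ if and only if $\omega$ solves the co-left Alia Yang-Baxter equation, which is the second assertion.

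The only delicate point — bookkeeping rather than a genuine obstacle — is keeping the Sweedler legs and the signs aligned in the skew-symmetric computation, and in particular recognising that the correct relabelling is the cyclic shift $(x,y,z)\mapsto(z,x,y)$, dual to the map $\xi$ appearing in the proof of Proposition \ref{pro:w}. Everything else is a routine dualisation.
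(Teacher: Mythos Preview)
Your proof is correct and follows exactly the approach the paper indicates: the paper states only that the result is ``Dual to Proposition \ref{pro:w}'' and gives no details, so your explicit expansion via \eqref{eq:muti} and the cyclic relabelling $(x,y,z)\mapsto(z,x,y)$ in the skew-symmetric case is precisely the intended dualisation carried out in full.
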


 \begin{defi}\label{de:abb}\cite{KLY} Let $(A, [, ])$ be a left Alia algebra and $\omega\in (A\o A)^{*}$ a skew-symmetric element. Assume that
 \begin{eqnarray}\label{eq:symplectic}
 \omega([y, z]-[z, y], x)=\omega([x, y], z)-\omega([x, z], y),\forall~ x, y, z\in A.
 \end{eqnarray}
 then we call $\om$ a {\bf symplectic structure} on $(A, [, ])$, and $(A, [, ])$ a {\bf symplectic left Alia algebra} denoted by $(A, [, ], \omega)$.
 \end{defi}

 \begin{rmk}\label{rmk:de:abb} If $r^\#$ defined by Eq.(\ref{eq:rsharp}) is bijective, then we call $r$ {\bf nondegenerate}. By \cite[Proposition 5.4]{KLY}, we know that $r$ is an antisymmetric nondegenerate solution of the left Alia Yang-Baxter equation in $(A, [, ])$ if and only if $\om_r$ is a nondegenerate symplectic structure on $(A, [, ])$, where $\om_r$ is defined by
 \begin{eqnarray} 
 \om_r(x, y):=\langle {r^\#}^{-1}(x), y \rangle.\label{eq:omegar}
 \end{eqnarray}
 \end{rmk}

 \begin{pro}\label{pro:ab} Let $(A, [, ]_{\omega}, \D)$ be a dual triangular left Alia bialgebra. Then $(A, [, ]_{\omega}, \omega)$ is a symplectic left Alia algebra.
 \end{pro}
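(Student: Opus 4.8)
The plan is to check that the skew-symmetric bilinear form $\omega$ attached to a dual triangular left Alia bialgebra $(A,[,]_\omega,\D)$ satisfies the defining identity \eqref{eq:symplectic} of a symplectic structure on $(A,[,]_\omega)$; skew-symmetry of $\omega$ is already part of the hypothesis, and $(A,[,]_\omega)$ is a left Alia algebra by Theorem \ref{thm:y}, so only \eqref{eq:symplectic} remains to be verified. The main tool is Proposition \ref{pro:z}: since $\omega$ is a skew-symmetric solution of the co-left Alia Yang-Baxter equation in $(A,\D)$, Eq.\eqref{eq:mwr} supplies the compact formula $\omega([x,y]_\omega,z)=\omega(x,z_{(1)})\omega(y,z_{(2)})$ for all $x,y,z\in A$.

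First I would rewrite both sides of \eqref{eq:symplectic} using this formula. The right-hand side becomes $\omega([x,y]_\omega,z)-\omega([x,z]_\omega,y)=\omega(x,z_{(1)})\omega(y,z_{(2)})-\omega(x,y_{(1)})\omega(z,y_{(2)})$. For the left-hand side, applying \eqref{eq:mwr} to $\omega([y,z]_\omega,x)$ and $\omega([z,y]_\omega,x)$ and then moving $x_{(1)},x_{(2)}$ into the first slot of each $\omega$ by skew-symmetry turns $\omega([y,z]_\omega-[z,y]_\omega,x)$ into $\omega(x_{(1)},y)\omega(x_{(2)},z)-\omega(x_{(1)},z)\omega(x_{(2)},y)$. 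Thus \eqref{eq:symplectic} is equivalent to the scalar identity
\[
\omega(x_{(1)},y)\omega(x_{(2)},z)-\omega(x_{(1)},z)\omega(x_{(2)},y)=\omega(x,z_{(1)})\omega(y,z_{(2)})-\omega(x,y_{(1)})\omega(z,y_{(2)}).
\]

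To establish this last identity I would apply the co-left Alia Yang-Baxter equation \eqref{eq:dybq} twice, once with the variable ordering $(x,y,z)$ and once with $(x,z,y)$, so as to re-express $\omega(x_{(1)},y)\omega(x_{(2)},z)$ and $\omega(x_{(1)},z)\omega(x_{(2)},y)$, respectively, in terms of products in which the Sweedler splitting falls on $y$ or on $z$. Subtracting the two resulting expressions and normalizing each factor by skew-symmetry, four of the eight terms cancel in pairs and the two surviving terms are precisely $\omega(x,z_{(1)})\omega(y,z_{(2)})$ and $-\omega(x,y_{(1)})\omega(z,y_{(2)})$, which is the right-hand side above. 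The only real obstacle is the bookkeeping: keeping the Sweedler subscripts $(1),(2)$ on the correct tensor legs and tracking the signs produced by each use of skew-symmetry; there is no conceptual difficulty once the two copies of \eqref{eq:dybq} are lined up correctly. With \eqref{eq:symplectic} verified, $(A,[,]_\omega,\omega)$ is a symplectic left Alia algebra by Definition \ref{de:abb}, completing the proof.
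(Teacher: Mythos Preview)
Your argument is correct: applying \eqref{eq:mwr} to all three bracket terms reduces \eqref{eq:symplectic} to the scalar identity you display, and subtracting two instances of \eqref{eq:dybq} (with the roles of $y$ and $z$ swapped) indeed collapses to that identity after the skew-symmetry cancellations.

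The paper's proof reaches the same conclusion by a slightly shorter path. Rather than applying \eqref{eq:mwr} separately to $\omega([y,z]_\omega,x)$ and $\omega([z,y]_\omega,x)$, it uses the explicit formula
\[
[y,z]_\omega-[z,y]_\omega=-y_{(1)}\omega(y_{(2)},z)+z_{(1)}\omega(z_{(2)},y),
\]
already derived in the proof of Theorem~\ref{thm:y}. Pairing this directly against $x$ gives two terms whose Sweedler legs sit on $y$ and $z$ (not on $x$), and these cancel on sight with the two terms coming from \eqref{eq:mwr} applied to $\omega([x,y]_\omega,z)$ and $\omega([x,z]_\omega,y)$. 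In particular, the paper never needs to invoke the co-Yang-Baxter equation \eqref{eq:dybq} itself. Your route trades that one-line observation for two applications of \eqref{eq:dybq} followed by a small cancellation, which is a legitimate alternative but a bit more bookkeeping than necessary.
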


 \begin{proof} For all $x, y, z\in A$, we have
 \begin{eqnarray*}
 &&\hspace{-13mm}\omega([y, z]_{\omega}-[z, y]_{\omega}, x)-\omega([x, y]_{\omega}, z)+\omega([x, z]_{\omega}, y)\\
 &\stackrel{\eqref{eq:mwr}}{=}&\omega(-y_{(1)}\omega(y_{(2)}, z)+z_{1}\omega(z_{(2)}, y), x)-\omega(y, z_{(2)})\omega(x, z_{(1)})+\omega(z, y_{(2)})\omega(x, y_{(1)})\\
 &\stackrel{}{=}&0,
 \end{eqnarray*}
 as desired.
 \end{proof}
 
 Now we present the main result in this section.

 \begin{thm}\label{thm:ac} Let $(A, [, ],\omega)$ be a symplectic left Alia algebra and $r\in A\o A$. If $(A, [, ], \D_{r})$ is a triangular left Alia bialgebra together with the comultiplication $\D_{r}$ given by Eq.\eqref{eq:comuti}, and further $(A, [, ]_{\omega}, \D_{r})$ is a dual triangular left Alia bialgebra together with the multiplication $[, ]_{\omega}$ given by Eq.\eqref{eq:muti}, then $((A, [, ]), N)$ is a Nijenhuis left Alia algebra, where $N:A\lr A$ is given by
 \begin{equation}\label{eq:ac2}
 N(x):=\sum_{i}\omega(x, a_{i})b_{i}, \forall~ x\in A.
 \end{equation}
 \end{thm}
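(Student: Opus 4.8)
The plan is to reduce the Nijenhuis identity \eqref{eq:7} for $N$ on $(A,[, ])$ to the two defining identities of a Nijenhuis deformation, namely Eqs.\eqref{eq:nn1}--\eqref{eq:nn2} with $[, ]_{\cdot}$ replaced by $[, ]_{\omega}$. Throughout write $r=\sum_{i}a_{i}\o b_{i}$, so that by hypothesis $r$ is antisymmetric, $N(x)=\sum_{i}\omega(x,a_{i})b_{i}$ by Eq.\eqref{eq:ac2}, and hence also $N(x)=-\sum_{i}\omega(x,b_{i})a_{i}$.

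\emph{Step 1: $[, ]_{\omega}$ is the $N$-deformation of $[, ]$.} I would first prove that, for all $x,y\in A$,
\[
[x,y]_{\omega}=[N(x),y]+[x,N(y)]-N([x,y]).
\]
To do this, substitute the formula \eqref{eq:comuti} for $\D_{r}$ into the formula \eqref{eq:muti} for $[, ]_{\omega}$, so that $[x,y]_{\omega}$ becomes a sum of six terms indexed by $i$. One of them, $\sum_{i}\omega(x,a_{i})[b_{i},y]$, is $[N(x),y]$; using the antisymmetry of $r$ and the skew-symmetry of $\omega$ two further terms combine into $[x,N(y)]$ (up to a $[N(y),x]$ contribution that cancels against itself), and the remaining sums, which carry $\omega([a_{i},x]-[x,a_{i}],y)$, $\omega([a_{i},y]-[y,a_{i}],x)$ and $\omega([b_{i},x],y)$, are rewritten using the symplectic identity \eqref{eq:symplectic} in the form $\omega([u,v]-[v,u],w)=\omega([w,u],v)-\omega([w,v],u)$; the auxiliary sums of the shape $\sum_{i}b_{i}\,\omega([y,a_{i}],x)$ and the two copies of $N([y,x])$ then cancel, leaving exactly $[x,N(y)]-N([x,y])$. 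Only the antisymmetry of $r$ and the symplectic axiom for $(A,[, ],\omega)$ are used in this step.

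\emph{Step 2: the compatibility $[N(x),N(y)]=N([x,y]_{\omega})$.} Expanding, $[N(x),N(y)]=\sum_{i,j}\omega(x,a_{i})\omega(y,a_{j})[b_{i},b_{j}]$. Since $(A,[, ],\D_{r})$ is a triangular left Alia bialgebra, $r$ is an antisymmetric solution of the left Alia Yang--Baxter equation \eqref{eq:ybq}, so Proposition \ref{pro:w} gives $(\D_{r}\o\id)(r)=\sum_{i,j}a_{i}\o a_{j}\o[b_{i},b_{j}]$, i.e.\ $\sum_{k}(a_{k})_{(1)}\o(a_{k})_{(2)}\o b_{k}=\sum_{i,j}a_{i}\o a_{j}\o[b_{i},b_{j}]$ in Sweedler notation for $\D_{r}$. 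Applying the functional $z\mapsto\omega(x,z)$ to the first tensor factor and $z\mapsto\omega(y,z)$ to the second gives $[N(x),N(y)]=\sum_{k}\omega(x,(a_{k})_{(1)})\,\omega(y,(a_{k})_{(2)})\,b_{k}$. On the other hand, since $(A,[, ]_{\omega},\D_{r})$ is a dual triangular left Alia bialgebra (and $(A,\D_{r})$ is in particular a left Alia coalgebra), $\omega$ is a skew-symmetric solution of the co-left Alia Yang--Baxter equation \eqref{eq:dybq} in $(A,\D_{r})$, so Proposition \ref{pro:z} gives $\omega([x,y]_{\omega},z)=\omega(x,z_{(1)})\omega(y,z_{(2)})$; putting $z=a_{k}$ and summing the vectors $b_{k}$ yields $N([x,y]_{\omega})=\sum_{k}\omega([x,y]_{\omega},a_{k})b_{k}=\sum_{k}\omega(x,(a_{k})_{(1)})\omega(y,(a_{k})_{(2)})b_{k}$, the same expression as above.

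\emph{Step 3: conclusion, and the main difficulty.} Applying $N$ to the identity of Step 1 and then substituting Step 2 gives $[N(x),N(y)]=N([N(x),y])+N([x,N(y)])-N^{2}([x,y])$, which rearranges to the Nijenhuis identity \eqref{eq:7} for $[, ]$; equivalently, the two displayed identities are exactly Eqs.\eqref{eq:nn1}--\eqref{eq:nn2} for $[, ]_{\omega}$, which by the remark following Theorem \ref{thm:fff} already encode that $N$ is a Nijenhuis operator. Hence $((A,[, ]),N)$ is a Nijenhuis left Alia algebra. The main obstacle is Step 1, the only genuinely computational part: tracking the six terms coming from \eqref{eq:muti} together with \eqref{eq:comuti} and making the cancellations work out requires careful, repeated use of the antisymmetry of $r$, the skew-symmetry of $\omega$, and the symplectic identity \eqref{eq:symplectic}; by contrast Steps 2 and 3 are short and essentially formal, invoking only Propositions \ref{pro:w} and \ref{pro:z} and the triangular/dual-triangular hypotheses.
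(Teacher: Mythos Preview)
Your proof is correct, and it takes a genuinely different route from the paper. The paper proceeds by a single long direct computation: it expands the co-left Alia Yang--Baxter equation \eqref{eq:dybq} with $\D_{r}$ from \eqref{eq:comuti}, applies the symplectic identity \eqref{eq:symplectic} repeatedly to derive an auxiliary six-term identity (their Eq.~(5.7)), and then plugs this identity, together with the left Alia Yang--Baxter equation \eqref{eq:ybq}, into the fully expanded expression $[N(x),N(y)]+N^{2}([x,y])-N([N(x),y])-N([x,N(y)])$ to see that everything cancels.

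Your decomposition is more structural: you split the Nijenhuis identity into the two halves \eqref{eq:nn1} and \eqref{eq:nn2}, so that Step~1 uses only the symplectic structure (and antisymmetry of $r$) to identify $[,]_{\omega}$ as the $N$-deformed bracket, while Step~2 isolates exactly where the triangular and dual-triangular hypotheses enter, via Propositions~\ref{pro:w} and~\ref{pro:z}. This makes the role of each hypothesis transparent and avoids the paper's intermediate identity entirely; it also shows, as a byproduct, that the symplectic condition alone forces $[,]_{\omega}$ to be the Nijenhuis deformation, which the paper's monolithic computation does not separate out. The paper's approach, by contrast, is self-contained in that it never invokes Propositions~\ref{pro:w} or~\ref{pro:z} explicitly. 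Your remark that Step~1 is the main computational burden is accurate (I verified the cancellations go through exactly as you describe), though the sketch you give of it is terse enough that a reader would benefit from seeing the six terms written out.
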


 \begin{proof} For all $x, y, z\in A$, by Eqs.\eqref{eq:comuti} and \eqref{eq:dybq}, we have  {\small
 \begin{eqnarray*}
 0
 \hspace{-3mm}&=&\hspace{-3mm}\omega(x_{(1)}, y)\omega(x_{(2)}, z)+\omega(x, y_{(2)})\omega(y_{(1)},  z)-\omega(x, y_{(1)})\omega(y_{(2)}, z)-\omega(x, z_{(2)})\omega(y, z_{(1)})\\ 		
 &=&\hspace{-3mm}\sum_{i}\Big(\underbrace{\omega([a_{i}, x], y)\omega(b_{i}, z)-\omega([x, a_{i}], y)\omega(b_{i}, z)}
 -\omega(a_{i}, y)\omega([b_{i}, x], z)\\ 		
 &&\hspace{-3mm}\underbrace{+\omega(x, b_{i})\omega([a_{i}, y], z)-\omega(x, b_{i})\omega([y, a_{i}], z)}
 -\omega(x, [b_{i}, y])\omega(a_{i}, z)\\ 		
 &&\hspace{-3mm}-\omega(x, [a_{i}, y])\omega(b_{i}, z)+\omega(x, [y, a_{i}])\omega(b_{i}, z)
 +\omega(x, a_{i})\omega([b_{i}, y], z)\\ 		
 &&\hspace{-3mm}-\omega(x, b_{i})\omega(y, [a_{i}, z])+\omega(x, b_{i})\omega(y, [z, a_{i}])
 +\omega(x, [b_{i}, z])\omega(y, a_{i})\Big)\\ 		&\stackrel{\eqref{eq:symplectic}}{=}&\hspace{-3mm}\sum_{i}\Big(\underbrace{\omega([y, a_{i}], x)\omega(b_{i}, z)}
 -\omega([y, x], a_{i})\omega(b_{i}, z)-\omega(a_{i}, y)\omega([b_{i}, x], z)\\ 		&&\hspace{-3mm}\underbrace{+\omega(x, b_{i})\omega([z, a_{i}], y)}-\omega(x, b_{i})\omega([z, y], a_{i})
 -\omega(x, [b_{i}, y])\omega(a_{i}, z)\\ 		
 &&\hspace{-3mm}-\omega(x, [a_{i}, y])\omega(b_{i}, z)+\underbrace{\omega(x, [y, a_{i}])\omega(b_{i}, z)}
 +\omega(x, a_{i})\omega([b_{i}, y], z)\\ 		
 &&\hspace{-3mm}-\omega(x, b_{i})\omega(y, [a_{i}, z])+\underbrace{\omega(x, b_{i})\omega(y, [z, a_{i}])}
 +\omega(x, [b_{i}, z])\omega(y, a_{i})\Big)\\ 			
 &=&\hspace{-3mm}\sum_{i}\Big(-\omega([y, x], a_{i})\omega(b_{i}, z)-\omega(a_{i}, y)\omega([b_{i}, x], z)
 -\omega(x, b_{i})\omega([z, y], a_{i})-\omega(x, [b_{i}, y])\omega(a_{i}, z)\\ 		&&\hspace{-3mm}-\omega(x, [a_{i}, y])\omega(b_{i}, z)+\underbrace{\omega(x, a_{i})\omega([b_{i}, y], z)}
 -\omega(x, b_{i})\omega(y, [a_{i}, z])+\omega(x, [b_{i}, z])\omega(y, a_{i})\Big)\\ 		&\stackrel{\eqref{eq:dybq}}{=}&\hspace{-3mm}\sum_{i}\Big(-\omega([y, x], a_{i})\omega(b_{i}, z)
 -\omega(a_{i}, y)\omega([b_{i}, x], z)-\omega(x, b_{i})\omega([z, y], a_{i})
 -\omega(x, [b_{i}, y])\omega(a_{i}, z)\\ 		
 &&\hspace{-3mm}-\omega(x, [a_{i}, y])\omega(b_{i}, z)+\underbrace{\omega(x, a_{i})\omega([y, b_{i}], z)
 +\omega(x, a_{i})\omega([z, b_{i}], y)-\omega(x, a_{i})\omega([z, y], b_{i})}\\
 &&\hspace{-3mm}-(x, b_{i})\omega(y, [a_{i}, z])+\omega(x, [b_{i}, z])\omega(y, a_{i})\Big)\\ 		
 &=&\hspace{-3mm}\sum_{i}\Big(-\omega([y, x], a_{i})\omega(b_{i}, z)-\omega(a_{i}, y)\omega([b_{i}, x], z)
 -\underbrace{\omega(x, b_{i})\omega([z, y], a_{i})-\omega(x, [b_{i}, y])\omega(a_{i}, z)}\\ 		
 &&\hspace{-3mm}\underbrace{-\omega(x, [a_{i}, y])\omega(b_{i}, z)}+\omega(x, a_{i})\omega([y, b_{i}], z)
 +\omega(x, a_{i})\omega([z, b_{i}], y)\underbrace{-\omega(x, a_{i})\omega([z, y], b_{i})}\\
 &&\hspace{-3mm}-(x, b_{i})\omega(y, [a_{i}, z])+\omega(x, [b_{i}, z])\omega(y, a_{i})\Big)\\
 &=&\hspace{-3mm}\sum_{i}\Big(-\omega([y, x], a_{i})\omega(b_{i}, z)-\omega(a_{i}, y)\omega([b_{i}, x], z)
 +\omega(x, a_{i})\omega([y, b_{i}], z)+\omega(x, a_{i})\omega([z, b_{i}], y)\\
 &&\hspace{-3mm}-\omega(x, b_{i})\omega(y, [a_{i}, z])+\omega(x, [b_{i}, z])\omega(y, a_{i})\Big)\\ 		
 &=&\hspace{-3mm}\sum_{i}\Big(-\omega([y, x], a_{i})\omega(b_{i}, z)+\omega([b_{i}, x],z)\omega(y, a_{i})
 +\omega([y, b_{i}],z)\omega(x, a_{i})-\omega(y, [z, b_{i}])\omega(x, a_{i})\\
 &&\hspace{-3mm}+\omega(y, [b_{i}, z])\omega(x, a_{i})-\omega(y, b_{i})\omega(x, [a_{i}, z])\Big).
 \end{eqnarray*}
 Thus
 \begin{eqnarray}\label{eq:ac1-1}
 &&\sum_{i}\Big(-\omega([y, x], a_{i})\omega(b_{i}, z)+\omega([b_{i}, x], z)\omega(y, a_{i})
 +\omega([y, b_{i}], z)\omega(x, a_{i})\nonumber\\
 &&\qquad-\omega(y, [z, b_{i}])\omega(x, a_{i})+\omega(y, [b_{i}, z])\omega(x, a_{i})-\omega(y,  b_{i})\omega(x, [a_{i}, z])\Big)=0.
 \end{eqnarray}
 } 
 Now we check that $N$ is a Nijenhuis operator on $(A, [, ])$.
 \begin{eqnarray*}
 &&\hspace{-12mm}[N(x), N(y)]+N^{2}([x, y])-N([N(x), y])-N([x, N(y)])\\	
 &=&\hspace{-3mm}\sum_{i,j}\Big(\omega(x, a_{i})\omega(y, a_{j})[b_{i}, b_{j}]+\omega([x, y], a_{i})\omega(b_{i}, a_{j})b_{j}
 -\omega(x, a_{i})\omega([b_{i}, y], a_{j})b_{j}\\
 &&\hspace{-3mm}-\omega([x, b_{i}], a_{j})\omega(y, a_{i})b_{j}\Big)\\	&\stackrel{\eqref{eq:ac1-1}}{=}&\hspace{-3mm}\sum_{i,j}\Big(\omega(x, a_{i})\omega(y, a_{j})[b_{i}, b_{j}]+\omega([x, y], a_{i})\omega(b_{i}, a_{j})b_{j}
 -\omega(x, a_{i})\omega([b_{i}, y], a_{j})b_{j}\\
 &&\hspace{-3mm}-\omega([x, b_{i}], a_{j})\omega(y, a_{i})b_{j}-\omega([x, y], a_{i})\omega(b_{i}, a_{j})b_{j}
 +\omega(x, a_{i})\omega([b_{i}, y], a_{j})b_{j}\\	
 &&\hspace{-3mm}+\omega([x, b_{i}], a_{j})\omega(y, a_{i})b_{j}-\omega(x, [a_{j}, b_{i}])\omega(y, a_{i})b_{j}
 +\omega(x, [b_{i}, a_{j}])\omega(y, a_{i})b_{j}\\	
 &&\hspace{-3mm}-\omega(x, b_{i})\omega(y, [a_{i}, a_{j}])b_{j}\Big)\\	
 &=&\hspace{-3mm}\sum_{i,j}\Big(\omega(x, a_{i})\omega(y, a_{j})[b_{i}, b_{j}]-\omega(x, [a_{j}, b_{i}])\omega(y, a_{i})b_{j}+\omega(x, [b_{i}, a_{j}])\omega(y, a_{i})b_{j}\\
 &&\hspace{-3mm}-\omega(x, b_{i})\omega(y, [a_{i}, a_{j}])b_{j}\Big)\\
 &\stackrel{\eqref{eq:ybq}}{=}&0,
 \end{eqnarray*}
 finishing the proof.
 \end{proof}

 \begin{ex} \label{ah} Let $(A, [, ], \D_{r})$ be a triangular left Alia bialgebra with $r$ nondegenerate, that is, $r$ is an antisymmetric nondegenerate solution of the left Alia Yang-Baxter equation in $(A, [, ])$, then by Remark \ref{rmk:de:abb}, $(A, [,], \om_r)$ is a symplectic left Alia algebra, where $\om_r$ given in Eq.(\ref{eq:omegar}).
 Next we check that $(A, [, ]_{\omega_r}, \D_{r})$ is a dual triangular left Alia bialgebra, that is, $\om_r$ is a skew-symmetric solution of the co-left Alia Yang-Baxter equation in $(A, \D_{r})$. In fact, for all $x, y, z\in A$, we have 
 {\small \begin{eqnarray*}
 &&\hspace{-11mm} \omega_{r}(x_{(1)}, y)\omega_{r}(x_{(2)}, z)+\omega_{r}(x, y_{(2)})\omega_{r}(y_{(1)},  z)-\omega_{r}(x, y_{(1)})\omega_{r}(y_{(2)}, z)-\omega_{r}(x, z_{(2)})\omega_{r}(y, z_{(1)})\\
 &=&\hspace{-3mm}\sum_{i}\Big(\underbrace{\omega([a_{i}, x], y)\omega(b_{i}, z)-\omega([x, a_{i}], y)\omega(b_{i}, z)}
 -\omega(a_{i}, y)\omega([b_{i}, x], z)\\ 		
 &&\hspace{-3mm}\underbrace{+\omega(x, b_{i})\omega([a_{i}, y], z)-\omega(x, b_{i})\omega([y, a_{i}], z)}
 -\omega(x, [b_{i}, y])\omega(a_{i}, z)\\ 		
 &&\hspace{-3mm}-\omega(x, [a_{i}, y])\omega(b_{i}, z)+\omega(x, [y, a_{i}])\omega(b_{i}, z)
 +\omega(x, a_{i})\omega([b_{i}, y], z)\\ 		
 &&\hspace{-3mm}-\omega(x, b_{i})\omega(y, [a_{i}, z])+\omega(x, b_{i})\omega(y, [z, a_{i}])
 +\omega(x, [b_{i}, z])\omega(y, a_{i})\Big)\\ 		&\stackrel{\eqref{eq:symplectic}}{=}&\hspace{-3mm}\sum_{i}\Big(\underbrace{\omega([y, a_{i}], x)\omega(b_{i}, z)}-\omega([y, x], a_{i})\omega(b_{i}, z)-\omega(a_{i}, y)\omega([b_{i}, x], z)\\ 		
 &&\hspace{-3mm}\underbrace{+\omega(x, b_{i})\omega([z, a_{i}], y)}-\omega(x, b_{i})\omega([z, y], a_{i})
 \underbrace{-\omega(x, [b_{i}, y])\omega(a_{i}, z)}\\ 		
 &&\hspace{-3mm}\underbrace{-\omega(x, [a_{i}, y])\omega(b_{i}, z)}+\underbrace{\omega(x, [y, a_{i}])\omega(b_{i}, z)}
 +\omega(x, a_{i})\omega([b_{i}, y], z)\\ 		
 &&\hspace{-3mm}-\omega(x, b_{i})\omega(y, [a_{i}, z])+\underbrace{\omega(x, b_{i})\omega(y, [z, a_{i}])}
 +\omega(x, [b_{i}, z])\omega(y, a_{i})\Big)\\ 		
 &=&\hspace{-3mm}\sum_{i}\Big(\underbrace{-\omega_{r}(a_{i}, y)\omega_{r}([b_{i}, x], z)+\omega_{r}(x, [b_{i}, z])\omega_{r}(y, a_{i})}
 \underbrace{+\omega_{r}(x, a_{i})\omega_{r}([b_{i}, y], z)-\omega_{r}(x, b_{i})\omega_{r}(y, [a_{i}, z])}\\
 &&\hspace{-3mm}-\omega_{r}([y, x], a_{i})\omega_{r}(b_{i}, z)-\omega_{r}(x, b_{i})\omega_{r}([z, y], a_{i})\Big)\\
 &\stackrel{\eqref{eq:symplectic}}{=}&\hspace{-3mm}\sum_{i}\Big(\omega_{r}(a_{i}, y)\omega_{r}([z, x]-[x, z], b_{i})+\omega_{r}(a_{i}, x)\omega_{r}([z, y]-[y, z], b_{i})\\
 &&\hspace{-3mm}+\omega_{r}([y, x], b_{i})\omega_{r}(a_{i}, z)-\omega_{r}(a_{i}, x)\omega_{r}([z, y], b_{i})\Big)\\
 &\stackrel{\eqref{eq:symplectic}}{=}&\hspace{-3mm}\sum_{i}\Big(\omega_{r}(a_{i}, y)\omega_{r}([z, x]-[x, z], b_{i})-\omega_{r}(a_{i}, x)\omega_{r}([y, z], b_{i})+\omega_{r}([y, x], b_{i})\omega_{r}(a_{i}, z)\Big)\\ 		
 &=&\hspace{-3mm}\sum_{i}\Big(-\omega_{r}([z, x]-[x, z], y)+\omega_{r}([y, z], x)-\omega_{r}([y, x], z)\Big)\\
 &\stackrel{\eqref{eq:symplectic}}{=}&0.
 \end{eqnarray*}
 Then by Theorem \ref{thm:ac}, the Nijenhuis operator $N$ on $(A, [,])$ is
 \begin{eqnarray*}
 &N(x)=\sum\limits_{i} \om_r(x, a_i)b_i= \sum\limits_{i}\langle {r^\#}^{-1}(x), a_i \rangle b_i= r^\#({r^\#}^{-1}(x))=x.&
 \end{eqnarray*}}
 That is to say, in this case, $N=\id_{A}$. 
 \end{ex} 

 \begin{ex}\label{ex:aff} Let $(A, [, ])$ be a 4-dimensional left Alia algebra with basis $\{e_{1}, e_{2}, e_{3}, e_{4}\}$ and the product $[, ]$ given by the following table
 \begin{center}
 \begin{tabular}{r|rrrr}
 $[,]$ & $e_{1}$  & $e_{2}$ & $e_{3}$ & $e_{4}$\\
 \hline
 $e_{1}$ & $0$  & $0$ & $0$ & $0$\\
 $e_{2}$ & $0$  & $0$ & $0$  & $0$\\
 $e_{3}$ & $e_{1}$  & $0$ & $0$ & $0$\\
 $e_{4}$ & $e_{3}$  & $0$ & $0$ & $0$\\
 \end{tabular}.
 \end{center}
 Set $r=e_{2}\o e_{3}-e_{3}\o e_{2}$, then $(A, [, ], \D_{r})$ is a triangular left Alia bialgebra, where $\D_{r}$ is given by:
 \begin{center}
 $\begin{cases}
 \D_{r}(e_{1})=-e_{1}\o e_{2}-e_{2}\o e_{1}\\
 \D_{r}(e_{2})=0\\
 \D_{r}(e_{3})=0\\
 \D_{r}(e_{4})=0\\
 \end{cases}$.
 \end{center}

 Set
 \begin{center}
 \begin{tabular}{r|rrrr}
 $\omega$ & $e_{1}$  & $e_{2}$ & $e_{3}$ & $e_{4}$\\
 \hline
 $e_{1}$ & $0$  & $0$ & $0$ & $0$\\
 $e_{2}$ & $0$  & $0$ & $0$  & $\lambda$\\
 $e_{3}$ & $0$  & $0$ & $0$ & $0$\\
 $e_{4}$ & $0$  & $-\lambda$ & $0$ & $0$\\
 \end{tabular}\ \ ($\lambda$ is a parameter).
 \end{center}
 Then $(A, [, ], \omega)$ is a symplectic left Alia algebra, and $(A, [, ]_{\omega}, \D_{r})$ is a dual triangular left Alia bialgebra. Then by Theorem \ref{thm:ac}, let
 \begin{center}
 $\begin{cases}
 N(e_{1})=0\\
 N(e_{2})=0\\
 N(e_{3})=0\\
 N(e_{4})=-\lambda e_{3}\\
 \end{cases}$,
 \end{center}
 then $((A, [, ]), N)$ is a Nijenhuis left Alia algebra.
 \end{ex}

 In what follows, we construct Nijenhuis operators on a left Alia coalgebra.

 \begin{defi}\label{de:agg}	Let $(A, \D)$ be a  left Alia coalgebra and $r \in A\o A$ an antisymmetric element. Assume that
 \begin{eqnarray}\label{eq:cosymplectic}
 \sum_i (a_{i(1)}\o a_{i(2)}\o b_{i}-a_{i(2)}\o a_{i(1)}\o b_{i}-a_{i(2)}\o b_{i}\o a_{i(1)}+b_{i}\o a_{i(2)}\o a_{i(1)})=0.
 \end{eqnarray}
 Then, $(A, \D)$ is called a {\bf cosymplectic left Alia coalgebra} denoted by $(A, \D, r)$.
 \end{defi}

 \begin{pro}\label{pro:ag}	Let $(A, [, ])$ be a left Alia algebra and $r \in A\o A$ antisymmetric. If $(A, [, ], \D_{r})$ is a triangular left Alia bialgebra, where $\D_{r}$ is defined by Eq.$\eqref{eq:comuti}$, then $(A, \D_{r}, r)$ is a cosymplectic left Alia coalgebra.
 \end{pro}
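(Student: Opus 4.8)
The plan is to check the defining identity \eqref{eq:cosymplectic} of a cosymplectic left Alia coalgebra directly. By Definition \ref{de:v}, the hypothesis that $(A,[,],\D_{r})$ is a triangular left Alia bialgebra means exactly that $r=\sum_{i}a_{i}\o b_{i}$ is an antisymmetric solution of the left Alia Yang--Baxter equation, so $Al(r)=0$ (Eq.\eqref{eq:ybq}) and $\D_{r}$ is the comultiplication \eqref{eq:comuti}; in particular $(A,\D_{r})$ is a left Alia coalgebra.

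First I would repackage the left-hand side of \eqref{eq:cosymplectic}. Writing $\D_{r}(a_{i})=a_{i(1)}\o a_{i(2)}$ and labelling its four summands $T_{1},T_{2},T_{3},T_{4}$ in order, one checks $T_{2}=-(\tau\o\id)(T_{1})$ and, swapping the first two tensor legs, $T_{3}=-(\tau\o\id)(T_{4})$; hence the left-hand side equals $\big((\id^{\o 2}-\tau)\o\id\big)(T_{1}+T_{4})$. Recognizing $\sum_{i}a_{i(1)}\o a_{i(2)}\o b_{i}=(\D_{r}\o\id)(r)$ and, using the antisymmetry $r+\tau(r)=0$, $\sum_{i}b_{i}\o a_{i(2)}\o a_{i(1)}=-(\id\o\tau)(\id\o\D_{r})(r)$, we obtain
\[
T_{1}+T_{4}=(\D_{r}\o\id)(r)-(\id\o\tau)(\id\o\D_{r})(r).
\]

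Next I would feed in Proposition \ref{pro:w}: since $r$ is an antisymmetric solution of \eqref{eq:ybq}, Eqs.\eqref{eq:dra} and \eqref{eq:dr} give $(\D_{r}\o\id)(r)=\sum_{i,j}a_{i}\o a_{j}\o[b_{i},b_{j}]$ and $(\id\o\D_{r})(r)=-\sum_{i,j}[a_{i},a_{j}]\o b_{i}\o b_{j}$. Substituting these and expanding $(\id^{\o 2}-\tau)\o\id$, the left-hand side of \eqref{eq:cosymplectic} collapses to
\[
\sum_{i,j}\Big(a_{i}\o a_{j}\o\big([b_{i},b_{j}]-[b_{j},b_{i}]\big)+[a_{i},a_{j}]\o b_{j}\o b_{i}-b_{j}\o[a_{i},a_{j}]\o b_{i}\Big).
\]
To annihilate this, I would use $Al(r)=0$ twice --- once as it stands, and once after swapping the first two tensor legs and renaming the dummy indices --- so as to replace both $\sum_{i,j}a_{i}\o a_{j}\o[b_{j},b_{i}]$ and $\sum_{i,j}a_{i}\o a_{j}\o[b_{i},b_{j}]$ by the remaining two summands of $Al(r)$, among which is the mixed term $\sum_{i,j}a_{i}\o\big([a_{j},b_{i}]-[b_{i},a_{j}]\big)\o b_{j}$. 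Applying the antisymmetry of $r$ to these mixed-bracket terms (i.e. swapping $a_{i}\leftrightarrow b_{i}$, which costs an overall sign) converts them into terms of the shapes $b_{i}\o\big([a_{i},a_{j}]-[a_{j},a_{i}]\big)\o b_{j}$ and $\big([a_{i},a_{j}]-[a_{j},a_{i}]\big)\o b_{i}\o b_{j}$; collecting like terms and performing one last relabelling $i\leftrightarrow j$ in each group, the two groups of surviving terms vanish separately. Hence the whole sum is $0$, and $(A,\D_{r},r)$ is a cosymplectic left Alia coalgebra.

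The step I expect to be the main obstacle is this last cancellation: tracking the signs, the tensor slot in which each bracket sits, and the half-dozen dummy-index relabellings among the roughly ten surviving terms. There is no genuinely new idea involved --- the statement is the comultiplicative counterpart of Proposition \ref{pro:ab} (a dual triangular left Alia bialgebra carries a symplectic structure), and once the identity has been reduced to $Al(r)=0$ and the antisymmetry of $r$, the verification is mechanical --- so the care needed lies entirely in the bookkeeping.
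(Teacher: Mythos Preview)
Your proof is correct. The reduction to $((\id-\tau)\o\id)(T_1+T_4)$ is right, Proposition~\ref{pro:w} is invoked legitimately, and the final cancellation goes through exactly as you outline: applying antisymmetry in the index $i$ to the mixed-bracket terms yields $M=\sum_{i,j}b_i\o([a_i,a_j]-[a_j,a_i])\o b_j$ and $N=\sum_{i,j}([a_i,a_j]-[a_j,a_i])\o b_i\o b_j$, and after one relabelling each these cancel against the remaining $[a_i,a_j]$-terms.

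The paper, however, takes a different and much shorter route: its entire proof is the single line ``Dual to Proposition~\ref{pro:ab}.'' That is, the cosymplectic identity \eqref{eq:cosymplectic} is formally dual to the symplectic identity \eqref{eq:symplectic}, and the triangular construction $(A,[,],\D_r)$ is dual to the dual-triangular construction $(A,[,]_\omega,\D)$, so the short computation in the proof of Proposition~\ref{pro:ab} dualises verbatim. Your approach has the virtue of being self-contained and explicit --- one actually sees where $Al(r)=0$ and antisymmetry enter --- whereas the paper's approach is cleaner but asks the reader to carry out the dualisation mentally. You already note the duality in your closing remark; you could shorten your write-up considerably by promoting that observation to the proof itself.
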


 \begin{proof}
  Dual to Proposition $\ref{pro:ab}$.
 \end{proof}

 \begin{thm}\label{thm:aj} Let $(A, \D, r)$ be a cosymplectic left Alia coalgebra and $\omega\in (A\o A)^{*}$. If $(A, [, ]_{\omega}, \D)$ is a dual triangular left Alia bialgebra together with the multiplication $[, ]_{\omega}$ given in Eq.\eqref{eq:muti}, and further $(A, [, ]_{\omega}, \D_{r})$ is a triangular left Alia bialgebra together with the comultiplication $\D_{r}$ given by Eq.\eqref{eq:comuti}, then $((A, \D), S)$ is a Nijenhuis left Alia coalgebra, where $S:A\lr A$ is given by
 \begin{equation*}
 S(x):=\sum_{i}a_{i}\omega(b_{i}, x), \forall~ x\in A.
 \end{equation*}
 \end{thm}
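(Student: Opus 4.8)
The plan is to dualize, essentially line by line, the proof of Theorem~\ref{thm:ac}, interchanging multiplication with comultiplication, the symplectic form $\omega$ with the element $r$, and the left Alia Yang--Baxter equation with its co-version. First I would unpack the hypotheses into working identities. Since $(A,[,]_{\omega},\D)$ is a dual triangular left Alia bialgebra, $\omega$ is a skew-symmetric solution of the co-left Alia Yang--Baxter equation \eqref{eq:dybq} in $(A,\D)$; equivalently, by Proposition~\ref{pro:z}, the compact forms \eqref{eq:mw} and \eqref{eq:mwr} hold for $[,]_{\omega}$. Since $(A,[,]_{\omega},\D_{r})$ is a triangular left Alia bialgebra, $r$ is an antisymmetric solution of the left Alia Yang--Baxter equation \eqref{eq:ybq} in $(A,[,]_{\omega})$; equivalently, by Proposition~\ref{pro:w}, the compact forms \eqref{eq:dra} and \eqref{eq:dr} hold with $[,]$ replaced by $[,]_{\omega}$ and $\D_{r}$ as in \eqref{eq:comuti}. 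Finally, $r$ is cosymplectic, i.e.\ \eqref{eq:cosymplectic} holds.

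Next I would establish the identity dual to \eqref{eq:ac1-1}. In the proof of Theorem~\ref{thm:ac} that identity is produced by substituting the formula \eqref{eq:comuti} for $\D_{r}$ into the co-left Alia YBE \eqref{eq:dybq} and simplifying repeatedly with the symplectic relation \eqref{eq:symplectic} and with \eqref{eq:dybq} again. Dually, I would insert the formula \eqref{eq:muti} for $[,]_{\omega}$ into the left Alia YBE \eqref{eq:ybq} for $r$ in $(A,[,]_{\omega})$ and simplify using the cosymplectic relation \eqref{eq:cosymplectic} and \eqref{eq:ybq} again; the result is a tensor identity $(\star)$ in $A^{\otimes 3}$ transposing \eqref{eq:ac1-1} under the algebra/coalgebra duality.

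Finally I would verify the Nijenhuis coalgebra condition \eqref{eq:nlaca} for $S$. Expanding each of $S(x_{(1)})\o S(x_{(2)})$, $S^{2}(x)_{(1)}\o S^{2}(x)_{(2)}$, $S(S(x)_{(1)})\o S(x)_{(2)}$ and $S(x)_{(1)}\o S(S(x)_{(2)})$ by means of $S(u)=\sum_{i}a_{i}\,\omega(b_{i},u)$ and the coproduct $\D$, the difference of the two sides of \eqref{eq:nlaca} becomes a double sum indexed by two copies of $r$, paired against $\D$ and $\omega$; applying $(\star)$ once and then the co-left Alia YBE \eqref{eq:dybq} once more (the latter step dual to the invocation of \eqref{eq:ybq} at the end of the proof of Theorem~\ref{thm:ac}) collapses it to $0$. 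Hence $((A,\D),S)$ is a Nijenhuis left Alia coalgebra.

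The only real difficulty is organizational. Three Yang--Baxter-type hypotheses are simultaneously in play --- the co-YBE of $\omega$ in $(A,\D)$, the YBE of $r$ in $(A,[,]_{\omega})$, and the cosymplectic identity --- each dual to an ingredient of Theorem~\ref{thm:ac} but entering at a different point, so one must be careful which is being invoked at each step; one must also transpose the long chain of cancellations behind \eqref{eq:ac1-1} faithfully and track indices through the final expansion of \eqref{eq:nlaca}. No new idea beyond the duality is required. (When $A$ is finite-dimensional the theorem also follows directly from Theorem~\ref{thm:ac} by passing to $A^{*}$, since a cosymplectic left Alia coalgebra structure on $A$ with element $r$ is precisely a symplectic left Alia algebra structure on $A^{*}$ with form $r$; I would nonetheless present the computational proof to avoid any finiteness assumption.)
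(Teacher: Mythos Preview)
Your proposal is correct and matches the paper's approach exactly: the paper's proof consists of the single line ``Dual to Theorem~\ref{thm:ac},'' and your outline is precisely a careful unpacking of what that dualization entails. Your remark about the finite-dimensional shortcut via $A^{*}$ is a nice addition not mentioned in the paper.
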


 \begin{proof}
 Dual to Theorem \ref{thm:ac}.
 \end{proof}
 
 We end this paper with a question.
 
 \begin{question} Theorem \ref{thm:ac} provides us a method to construct Nijenhuis left Alia algebras, and dually Nijenhuis left Alia coalgebras can be obtained by Theorem \ref{thm:aj}. Then under what conditions a Nijenhuis left Alia algebra $((A, [,]), N)$ given in Theorem \ref{thm:ac} and  a Nijenhuis left Alia coalgebra $((A, \D), S)$ given in Theorem \ref{thm:aj} can produce a Nijenhuis left Alia bialgebra $((A, [,], \D), N, S)$ given in Definition \ref{de:au}?
 \end{question}

\section*{Acknowledgment} Ma is supported by National Natural Science Foundation of China (No. 12471033) and Natural Science Foundation of Henan Province (No. 242300421389). Zheng is supported by National Natural Science Foundation of China (No. 12201188).

 \noindent



\end{document}